\numberwithin{equation}{section}
\def\be{\begin{equation}}   \def\ee{\end{equation}}
\def\ba   {\begin{array}}      \def\ea   {\end{array}}
\def\bea  {\begin{eqnarray}}   \def\eea  {\end{eqnarray}}
\def\bean {\begin{eqnarray*}}  \def\eean {\end{eqnarray*}}
\newtheorem{theorem} {Theorem}
\newtheorem{lemma}{Lemma}
\newtheorem{definition} {Definition}
\theoremstyle{definition}
\newcommand{\im}[1]{\ensuremath{\mathrm{\mathbf{i}}_{\mathbf{#1}}}}
\newcommand{\R}{\ensuremath{\mathbb{R}}}
\newcommand{\suite}{\{P_{c}^{(n)}(0)\}_{n\in\mathbb{N}}}
\newcommand{\vspan}{\text{span}_{\R}}
\newcommand{\spher}{\mathcal{M}_{\textnormal{Im}\mathbb{H}}}
\DeclareMathOperator{\arctantwo}{arctan2}
\title{On the Algebraic Foundation of the Mandelbulb}
\author{Vanessa Boily\thanks{E-mail: {\tt vanessa.boily@uqtr.ca}} }
\author{Dominic Rochon\thanks{E-mail: {\tt dominic.rochon@uqtr.ca}}}
\affil{Département de mathématiques et d'informatique, \\ Université du Québec à Trois-Rivières, C.P. 500, Trois-Rivières, Québec, Canada, G9A 5H7.}
\date{\small{}}
\begin{document}
\maketitle

\begin{abstract}
In this paper, we generalize the Mandelbrot set using quaternions and spherical coordinates. In particular, we use pure quaternions to define a spherical product. This product, which is inspired by the product of complex numbers, add the angles and multiply the radii of the spherical coordinates. We show that the algebraic structure of pure quaternions with the spherical product is a commutative unital magma. Then, we present several generalizations of the Mandelbrot set. Among them, we present a set that is visually identical to the so-called Mandelbulb. We show that this set is bounded and that it can be generated by an escape time algorithm. We also define another generalization, the bulbic Mandelbrot set. We show that one of its 2D cuts has the same dynamics as the Mandelbrot set and that we can generate this set only with a quaternionic product, without using the spherical product. 
\end{abstract}\vspace{0.5cm}
\noindent\textbf{AMS subject classification:} 32A30, 30G35, 00A69
\\
\textbf{Keywords:} Spherical dynamics, Generalized Mandelbrot sets, Quaternions, Bulbic Mandelbrot set, Mandelbulb, Goldenbulb, Mandeldart, 3D Fractals
\section*{Introduction}
Quaternions were discovered by Sir William Rowan Hamilton while he was trying to extend the complex numbers to a third dimension by adding imaginary units. These four-dimensional numbers form a noncommutative field. In particular, quaternions can be used to rotate vectors around an arbitrary vector in the 3D space \cite{hanson, kantor, koecher}. 

In this paper, we are interested in the 3D generalizations of the Mandelbrot set by using the concept of rotation in the space. The Mandelbrot set is generated by iterating a complex quadratic polynomial and has been studied for many years \cite{beardon, douady, falconer}. Iterations of quaternionic polynomials have also been used to generalized the Mandelbrot set in 3D. However, since quaternions have four dimensions, the visualization of the set is only possible through 3D slices. Even if Bedding and Briggs said that possibly no interesting dynamics occur in the case of the quaternionic Mandelbrot set, it has been studied in many articles over the years (see \cite{bedding, cheng, dang, katunin, wang}).

Spherical coordinates were also used to generalize the Mandelbrot set in 3D. This method was inspired by the geometric result of squaring a complex number that is squaring the radius and doubling the angle. The famous Mandelbulb fractal is the result of this generalization \cite{Barrallo, CNRS, nylander, quilez, white}.

Although the use of spherical coordinates makes it possible to obtain beautiful fractals like the Mandelbulb, there is no specific algebraic structure behind it. In fact, these fractals were generated using a ray tracing method inspired by other methods \cite{brouillettepariserochon, dang, rochonmartineau}. In this paper, we present an algebraic structure supported by pure quaternions to generate fractals using a spherical polynomial. Moreover, we show that our spherical Mandelbrot set is bounded by a sphere of radius 2. We also establish a relationship between a variation of this generalized Mandelbrot set and a 3D slice of the quaternionic Mandelbrot set.

In Section \ref{sec:preliminaries}, quaternions and spherical coordinates are introduced. In Section \ref{sec:spherical}, we define the spherical product of pure quaternions and present some of its properties. Then, in Section \ref{sec:quaternionic}, we present a generalization of the Mandelbrot set, the quaternionic Mandelbrot set. Finally, in Section \ref{sec:mandelbrot}, we introduce a generalization of the Mandelbrot set in true 3D using our spherical product of pure quaternions. For the power 8, we show that this set is visually the same as the so-called Mandelbulb and we found its specific bound in the space.

%
%
%
%
\section{Definitions and basics}\label{sec:preliminaries}

\subsection{The quaternions}
We present here a summary about quaternions and some of their properties. 
Quaternions are an extension of complex numbers. Complex numbers have two real components and an imaginary unit. Quaternions have four real components and three imaginary units such that $i^2=j^2=k^2=ijk=-1$. 
The quaternions set is noted
$$
\mathbb{H}:=\lbrace a+bi+cj+dk\: |\: a,b,c,d \in \mathbb{R}\rbrace.
$$
The quaternions set is an associative, noncommutative division algebra. Addition is defined the same way as vector addition.To define a multiplication rule, we need to assign values to $i,j$ and $k$ when multiplied two by two. These values are: 
\begin{align*}
ij=k,\ \  ji=-k,\\
jk=i,\ \  kj=-i,\\
ki=j,\ \  ik=-j.
\end{align*}
Multiplication of quaternions is defined in this way:
\begin{align*}
q_1q_2&:=(a_1a_2-b_1b_2-c_1c_2-d_1d_2)+(a_1b_2+b_1a_2+c_1d_2-d_1c_2)i\\
&+(a_1c_2+c_1a_2+d_1b_2-b_1d_2)j+(a_1d_2+d_1a_2+b_1c_2-c_1b_2)k.
\end{align*}
Analogously to complex numbers, quaternions have a modulus. This modulus is the Euclidean norm. The modulus is denoted $\|q\|:=\sqrt{a^2+b^2+c^2+d^2}$. An important property of quaternions is that the modulus of a product is the product of the modulus, which means that we have $\|q_1q_2\|=\|q_1\|\|q_2\|$ (see \cite{kantor}).

Like the complex numbers, quaternions have a polar representation. Consider a quaternion $q=a+\textbf{q}$ where $\textbf{q} \neq 0$ is the vector parts of $q$ which means $\textbf{q}=bi+cj+dk$. Thus, there exist a unique angle $0 \leq \phi \leq \pi$ such that
$$
q = \| q\| ( \cos\phi + \mathbf{p} \sin\phi )
$$
where $\textbf{p}=\frac{\mathbf{q}}{\| \textbf{q} \|}$ is a unit vector.

The following theorem is a generalization of the De Moivre formula for the complex numbers called the quaternionic De Moivre formula \cite{kantor}.
\begin{theorem}
\label{theo:demoivre}
If $q=\| q\| ( \cos\phi + \mathbf{p} \sin\phi )$ is a quaternion in its polar representation and $n \in \mathbb{N}$, then
$$
q^n=\|q\|^n\left[\cos (n\phi)+\mathbf{p}\sin (n\phi)\right].
$$
\end{theorem}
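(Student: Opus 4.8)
The plan is to argue by induction on $n$. The base case $n=1$ is just the given polar representation, so nothing is to be done there. Before attacking the inductive step, I would isolate the one algebraic fact that makes everything work: for any unit vector $\mathbf{p} = bi+cj+dk$ with $b^2+c^2+d^2 = 1$, one has $\mathbf{p}^2 = -1$. This is an immediate application of the quaternionic multiplication rule stated above — the real part of $\mathbf{p}\cdot\mathbf{p}$ is $-(b^2+c^2+d^2) = -1$, and each of the $i$, $j$, $k$ components vanishes because the relevant cross terms cancel in pairs. Consequently the real subalgebra $\mathbb{R}\oplus\mathbb{R}\mathbf{p}\subset\mathbb{H}$ generated by $1$ and $\mathbf{p}$ is commutative and arithmetically indistinguishable from $\mathbb{C}$ (with $\mathbf{p}$ playing the role of $i$).

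For the inductive step, I would assume $q^k = \|q\|^k[\cos(k\phi) + \mathbf{p}\sin(k\phi)]$ and compute $q^{k+1} = q^k q$. Since real scalars are central, I can factor out $\|q\|^k \cdot \|q\| = \|q\|^{k+1}$ and reduce to expanding the product
\[
\bigl[\cos(k\phi) + \mathbf{p}\sin(k\phi)\bigr]\bigl[\cos\phi + \mathbf{p}\sin\phi\bigr]
\]
inside $\mathbb{R}\oplus\mathbb{R}\mathbf{p}$. Using $\mathbf{p}^2 = -1$ together with the classical sum formulas $\cos(k\phi)\cos\phi - \sin(k\phi)\sin\phi = \cos((k+1)\phi)$ and $\sin(k\phi)\cos\phi + \cos(k\phi)\sin\phi = \sin((k+1)\phi)$, this collapses to $\cos((k+1)\phi) + \mathbf{p}\sin((k+1)\phi)$, which closes the induction. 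Alternatively, one could phrase the whole thing as: the map $x+\mathbf{p}y \mapsto x+iy$ is an $\mathbb{R}$-algebra isomorphism from $\mathbb{R}\oplus\mathbb{R}\mathbf{p}$ onto $\mathbb{C}$, so the statement is just the ordinary complex De Moivre formula transported back.

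I do not expect any real obstacle here. The only point that genuinely needs care is the observation that every quaternion appearing in the computation lies in the commutative subalgebra $\mathbb{R}\oplus\mathbb{R}\mathbf{p}$, so the noncommutativity of $\mathbb{H}$ never enters and the classical proof goes through verbatim; a reader could otherwise worry that $q^k$ and $q$ fail to commute. I would also note in passing that the uniqueness of the angle $\phi\in[0,\pi]$ from the polar representation plays no role in this argument — only the existence of the representation is used.
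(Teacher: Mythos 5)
Your proof is correct: the key observation that $\mathbf{p}^2=-1$ for a unit pure quaternion (immediate from the multiplication rule, since the cross terms cancel) places every quantity in the computation inside the commutative subalgebra $\mathbb{R}\oplus\mathbb{R}\mathbf{p}\cong\mathbb{C}$, after which the induction with the angle-addition formulas goes through exactly as in the complex De Moivre formula, and your remark that noncommutativity of $\mathbb{H}$ never enters is exactly the point that needs to be made. Note that the paper itself gives no proof of this theorem --- it is quoted from the cited literature --- and your argument is the standard one, so there is nothing further to reconcile.
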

There is an important subset in the quaternions set that is called the pure quaternions set and is denoted \textnormal{Im}$\mathbb{H}$. A pure quaternion is a quaternion $q=a+bi+cj+dk$ where $a=0$.
Pure quaternions can be used to rotate vectors around an arbitrary vector in 3D space using the following theorem \cite{hanson, kantor}.
\begin{theorem}
\label{theo:rotation}
Consider $P$ and $u$ two pure quaternions and a quaternion
$$q=\cos(\phi)+\frac{u}{\|u\|}\sin(\phi).
$$
Then, the product $qPq^{-1}$ is the result of rotating $P$ about the vector $u$ through $2\phi$.
\end{theorem}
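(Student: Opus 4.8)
The plan is to reduce the statement to the classical Rodrigues rotation formula by exploiting the polar form of $q$ together with the product rule for pure quaternions. First I would record the two structural facts that drive the whole argument. Since $\cos^2\phi+\sin^2\phi=1$, the quaternion $q$ is a \emph{unit} quaternion, so $q^{-1}=\overline q=\cos\phi-\frac{u}{\|u\|}\sin\phi$. And for any two pure quaternions $\mathbf a,\mathbf b$ one has $\mathbf a\mathbf b=-(\mathbf a\cdot\mathbf b)+\mathbf a\times\mathbf b$, where $\cdot$ and $\times$ denote the Euclidean scalar and vector products in $\mathbb R^3$; this is immediate from the multiplication table for $i,j,k$. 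I would also note at the outset that $qPq^{-1}$ is again a pure quaternion, since conjugating it gives $q\,\overline P\,q^{-1}=-qPq^{-1}$, so it legitimately represents a vector in $\mathbb R^3$.

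Next, writing $\mathbf p:=u/\|u\|$, I would split $P$ orthogonally with respect to the axis as $P=P_\parallel+P_\perp$, where $P_\parallel=(P\cdot\mathbf p)\mathbf p$ is collinear with $\mathbf p$ and $P_\perp=P-P_\parallel$ satisfies $P_\perp\cdot\mathbf p=0$. Using the pure-quaternion product rule, collinearity gives $\mathbf p P_\parallel=P_\parallel\mathbf p$ (the cross term vanishes), so $q$ commutes with $P_\parallel$ and therefore $qP_\parallel q^{-1}=P_\parallel qq^{-1}=P_\parallel$: the component along the axis is fixed, exactly as a rotation about $u$ should behave. For the orthogonal part, the antisymmetry of the cross product gives $\mathbf p P_\perp=\mathbf p\times P_\perp=-P_\perp\times\mathbf p=-P_\perp\mathbf p$, from which $qP_\perp=P_\perp\,q^{-1}$, and hence $qP_\perp q^{-1}=P_\perp\,q^{-2}$.

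It then remains to compute $q^{-2}$ and expand. Writing $q^{-1}=\cos(-\phi)+\mathbf p\sin(-\phi)$ and invoking Theorem \ref{theo:demoivre} (or squaring directly and using $\mathbf p^2=-1$) yields $q^{-2}=\cos(2\phi)-\mathbf p\sin(2\phi)$, so that
$$
qP_\perp q^{-1}=P_\perp\cos(2\phi)-(P_\perp\mathbf p)\sin(2\phi)=P_\perp\cos(2\phi)+(\mathbf p\times P_\perp)\sin(2\phi).
$$
Adding the two pieces gives $qPq^{-1}=P_\parallel+P_\perp\cos(2\phi)+(\mathbf p\times P_\perp)\sin(2\phi)$, which is precisely the Rodrigues formula for the rotation of $P$ through the angle $2\phi$ about the axis $u$.

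I do not anticipate a genuine obstacle: the computation is short once the decomposition is in place. The only points that need care are the sign bookkeeping in $\mathbf p P_\perp=-P_\perp\mathbf p$ (which rests entirely on the antisymmetry of $\times$ and the orthogonality $P_\perp\cdot\mathbf p=0$), and spelling out why the final expression \emph{is}, by definition, ``the result of rotating $P$ about $u$ through $2\phi$'' — that is, recalling that such a rotation acts as the identity on the component of $P$ along $u$ and as a planar rotation by $2\phi$ (in the right-hand orientation determined by $\mathbf p$) on the component orthogonal to $u$.
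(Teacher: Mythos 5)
Your proof is correct. Note that the paper does not prove this theorem at all: it is quoted as a known result with citations to the quaternion literature, so there is no in-paper argument to compare against. Your route — observing that $q$ is a unit quaternion so $q^{-1}=\cos\phi-\mathbf{p}\sin\phi$, checking that $qPq^{-1}$ is pure, splitting $P=P_\parallel+P_\perp$ relative to the axis, using $\mathbf{a}\mathbf{b}=-(\mathbf{a}\cdot\mathbf{b})+\mathbf{a}\times\mathbf{b}$ to get $qP_\parallel q^{-1}=P_\parallel$ and $qP_\perp q^{-1}=P_\perp q^{-2}=P_\perp\cos(2\phi)+(\mathbf{p}\times P_\perp)\sin(2\phi)$, and recognizing Rodrigues' formula — is the standard proof given in the cited references, and all the sign bookkeeping checks out. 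One small caution: Theorem \ref{theo:demoivre} as stated in the paper applies to the polar representation with $0\leq\phi\leq\pi$ and $n\in\mathbb{N}$, so invoking it for $q^{-1}$ written with the angle $-\phi$ is slightly outside its literal hypotheses; your parenthetical alternative of squaring $\cos\phi-\mathbf{p}\sin\phi$ directly using $\mathbf{p}^2=-1$ is the cleaner justification and closes that gap.
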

\subsection{Spherical coordinates}
We now introduce spherical coordinates. These coordinates are used to represent a point in the 3D space. A point in spherical coordinates is written $(\rho, \theta, \phi)$ where
\begin{itemize}
\item $\rho \geq 0$ is the distance from the origin $O$ to $P$;
\item $0\leq \theta < 2\pi$ is the angle formed by the positive $x$ axis and the projection of $P$ in the $xy$ plane;
\item $0\leq \phi \leq \pi$ is the angle formed by the positive $z$ axis and the line segment $OP$.
\end{itemize}
Since pure quaternions are three-dimensional, they can also be used to visualize points in the 3D space using vectors $i,j$ and $k$ as axis. Thus, we can represent pure quaternions in spherical coordinates using the following formulas for a pure quaternion $q=ai+bj+ck$ and $\rho\neq 0$ \cite{calcul}: 
\begin{align*}
\rho=\sqrt{a^2+b^2+c^2}, && \theta\ =\ \arctantwo (b,a), && \phi=\arccos (c/\rho).
\end{align*}
These formulas are used to switch from spherical coordinates to Cartesian coordinates.
Since the tangent function has a period of $\pi$ and that $0 \leq \theta < 2\pi$, we define the function $\arctantwo$ that returns the angle in the right quadrant. This function is already used in many programming softwares.
We define the spherical representation of a pure quaternion as follows by using the above formulas:
$$
q = \rho(i\sin\phi\cos\theta + j\sin\phi\sin\theta + k\cos\phi).
$$
This common definition of spherical coordinates assures unicity of all coordinates except $(0,0,0)$ and the two poles.
To have a unique representation for the quaternion $0$, we fix $\theta=0$ and $\phi=0$. So, the spherical coordinate that represents this quaternion is $(0,0,0)$. For the poles, that are the cases when $\phi=0$ or $\phi=\pi$, we fix $\theta=0$. So, in the case of a quaternion $ck$ where $c>0$, the associated spherical coordinate is $(c,0,0)$. In the case where the quaternion is in the form $ck$ where $c<0$, the associated spherical coordinate is $(|c|,0,\pi)$.
In this way, we have a unique spherical representation for all pure quaternions.

Moreover, we can easily verify that $\|q\|=\|\rho(i\sin\phi\cos\theta +j\sin\phi\sin\theta +k\cos\phi)\|=\rho$. Indeed, we have
\begin{small}
\begin{align*}
\|\rho(i\sin\phi\cos\theta +j\sin\phi\sin\theta +k\cos\phi)\| &=  \sqrt{\rho^2(\sin^2\phi\cos^2\theta+\sin^2\phi\sin^2\theta + \cos^2\phi)}\\
&=  \sqrt{\rho^2}\sqrt{\sin^2\phi(\cos^2\theta+\sin^2\theta) + \cos^2\phi}\\
&= \rho\sqrt{\sin^2\phi + \cos^2\phi}\\
&=  \rho.
\end{align*}
\end{small}%
\section{Spherical product of pure quaternions}\label{sec:spherical}
We know that multiplication of unitary complex numbers represents a rotation in the 2D plane. Thus, we define the spherical product by reproducing this idea in the 3D space (see \cite{Boily}). This product between pure unitary quaternions represents a rotation in the 3D space using spherical coordinates.
\begin{definition}
Consider $q_1$, $q_2 \in$ \textnormal{Im}$\mathbb{H}$ in spherical representation. Then, the spherical product is defined as
\begin{align*}
q_1 \times_s q_2 &:= \rho_1\rho_2 \big( i\sin(\phi_1+\phi_2)\cos(\theta_1+\theta_2)+j\sin(\phi_1+\phi_2)\sin(\theta_1+\theta_2)\\
&+k\cos(\phi_1+\phi_2)\big).
\end{align*}
\end{definition}
\begin{theorem}
$(\textnormal{Im}\mathbb{H},\times_s)$ is a commutative unital magma, in other words, it has the following properties:
\begin{enumerate}
\item \textnormal{Im}$\mathbb{H}$ is closed under $\times_s$,
\item the operation $\times_s$ is commutative,
\item \textnormal{Im}$\mathbb{H}$ has an identity element $e$ with respect to the operation $\times_s$.
\end{enumerate}
\end{theorem}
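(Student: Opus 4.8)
The plan is to verify the three properties directly from the definition of $\times_s$, using the unique spherical representation of pure quaternions set up in Section~\ref{sec:preliminaries}. The only subtlety throughout is bookkeeping: the spherical coordinates $(\rho,\theta,\phi)$ are constrained to $\rho\geq 0$, $0\leq\theta<2\pi$, $0\leq\phi\leq\pi$, so after adding angles one must reduce $\theta_1+\theta_2$ modulo $2\pi$ and, more delicately, reinterpret $\phi_1+\phi_2$ when it leaves $[0,\pi]$. I expect that reduction step to be the main obstacle — everything else is immediate.

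First I would prove closure. Given $q_1,q_2\in\textnormal{Im}\mathbb{H}$, the expression $\rho_1\rho_2\big(i\sin(\phi_1+\phi_2)\cos(\theta_1+\theta_2)+j\sin(\phi_1+\phi_2)\sin(\theta_1+\theta_2)+k\cos(\phi_1+\phi_2)\big)$ is manifestly a real linear combination of $i,j,k$, hence a pure quaternion; so $\textnormal{Im}\mathbb{H}$ is closed under $\times_s$. (If one wants the output written in the \emph{canonical} spherical form with angles in the prescribed ranges, one invokes the normalization conventions fixed in Section~\ref{sec:preliminaries} — fixing $\theta=0$ at the poles and at $0$ — to select the unique representative; this is where the modular reduction of $\theta_1+\theta_2$ and the case analysis on whether $\phi_1+\phi_2\in[0,\pi]$ enters, but it does not affect the fact that the result is a well-defined pure quaternion.)

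Next, commutativity: since ordinary addition of reals is commutative, $\phi_1+\phi_2=\phi_2+\phi_1$ and $\theta_1+\theta_2=\theta_2+\theta_1$, and $\rho_1\rho_2=\rho_2\rho_1$, so the defining formula is symmetric in $q_1$ and $q_2$; hence $q_1\times_s q_2=q_2\times_s q_1$.

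Finally I would exhibit the identity. The natural candidate is $e:=i$, which in spherical coordinates is $(\rho,\theta,\phi)=(1,0,\pi/2)$. For any $q=\rho(i\sin\phi\cos\theta+j\sin\phi\sin\theta+k\cos\phi)$ one computes
\begin{align*}
q\times_s e &= \rho\cdot 1\,\big(i\sin(\phi+\tfrac{\pi}{2})\cos(\theta+0)+j\sin(\phi+\tfrac{\pi}{2})\sin(\theta+0)+k\cos(\phi+\tfrac{\pi}{2})\big).
\end{align*}
Here one sees the obstruction concretely: $\sin(\phi+\tfrac{\pi}{2})=\cos\phi$ and $\cos(\phi+\tfrac{\pi}{2})=-\sin\phi$, which is \emph{not} $q$. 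So $i$ is not the identity, and the argument must instead locate the correct neutral element — a pure quaternion whose $\phi$-coordinate is $0$ so that adding it leaves $\phi$ unchanged, i.e. a candidate on the positive $k$-axis. Taking $e:=k$, with spherical coordinates $(1,0,0)$, gives $\phi_e=0$, $\theta_e=0$, $\rho_e=1$, whence
\begin{align*}
q\times_s e &= \rho\big(i\sin(\phi+0)\cos(\theta+0)+j\sin(\phi+0)\sin(\theta+0)+k\cos(\phi+0)\big)=q,
\end{align*}
and by commutativity $e\times_s q=q$ as well; so $e=k$ is a two-sided identity, and it is the unique one since the identity of a magma is unique when it exists. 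This completes the verification that $(\textnormal{Im}\mathbb{H},\times_s)$ is a commutative unital magma. The one place demanding care in writing the full proof is confirming that these identities hold \emph{as pure quaternions}, independently of which representatives one picks when $\theta_1+\theta_2\geq 2\pi$ or $\phi_1+\phi_2\notin[0,\pi]$; one handles this by observing that $i\sin\phi\cos\theta$, $j\sin\phi\sin\theta$, $k\cos\phi$ are $2\pi$-periodic in $\theta$ and that the normalization conventions were precisely designed to make the representation well-defined, so the computations above are valid verbatim.
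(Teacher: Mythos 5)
Your proposal is correct and follows essentially the same route as the paper: closure directly from the definition, commutativity from the symmetry of the real additions and multiplication, and the identity exhibited as $e=k$ (spherical coordinates $(1,0,0)$) verified by the same direct computation. The extra remarks about angle reduction and the false candidate $e=i$ are harmless but not needed beyond what the paper records.
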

\begin{proof}
Closure is a direct consequence of the definition. Commutativity is a direct consequence of the commutativity of real numbers for addition and multiplication. Now, we show that the identity element is the quaternion $k$. Let $q_1 \in$ Im$\mathbb{H}$. We have
\begin{align*}
q_1 \times_s k&=\rho_1\cdot 1 \big( i\sin(\phi_1+0)\cos(\theta_1+0)+j\sin(\phi_1+0)\sin(\theta_1+0)\\
&+k\cos(\phi_1+0)\big)\\
&= \rho_1 \big( i\sin(\phi_1)\cos(\theta_1)+j\sin(\phi_1)\sin(\theta_1)+k\cos(\phi_1)\big)\\
&= q_1\\
&= k \times_s q_1.
\end{align*}
Thus, the identity element is $e=k$ since we have $q\times_s k=q= k\times_s q$ for all $q\in$ Im$\mathbb{H}$.
\end{proof}
Since we showed that $($Im$\mathbb{H},\times_s)$ is a commutative unital magma, we now show why it is not a group. To be a group, the operation would need to be associative and all $q \in$ Im$\mathbb{H}$ would need to have an inverse element. The following theorem shows that only quaternions $ck$ where $c\in \mathbb{R}^*$ have an inverse element. 
\begin{theorem}
The only elements in $(\textnormal{Im}\mathbb{H},\times_s)$ that have an inverse are quaternions $ck$ where $c\in \mathbb{R}^*$.
\end{theorem}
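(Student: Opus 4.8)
The plan is to show that an element $q\in(\textnormal{Im}\mathbb{H},\times_s)$ has a $\times_s$-inverse if and only if it lies on the $k$-axis (minus the origin), by first pinning down what a candidate inverse would have to look like and then checking when it actually works on both sides. Since $e=k$ has spherical coordinates $(1,0,0)$, the equation $q_1\times_s q_2=k$ forces, by comparing moduli, $\rho_1\rho_2=1$, and by comparing the spherical-coordinate parts, $\phi_1+\phi_2\equiv 0$ and $\theta_1+\theta_2\equiv 0$ in the appropriate ranges. The first observation is that $\phi\in[0,\pi]$, so $\phi_1+\phi_2\in[0,2\pi]$, and the spherical data of $k$ has $\phi$-angle $0$; hence we need $\sin(\phi_1+\phi_2)=0$ and $\cos(\phi_1+\phi_2)=1$, i.e. $\phi_1+\phi_2=0$ (not $\pi$ or $2\pi$, since $\cos$ must equal $+1$), which already forces $\phi_1=\phi_2=0$. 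By the normalization conventions fixed in Section~\ref{sec:preliminaries}, $\phi=0$ means precisely that $q$ is of the form $ck$ with $c>0$ (with $\theta=0$), so we are driven onto the positive $k$-axis; the analogous bookkeeping at $\phi=\pi$ (treating $ck$ with $c<0$) is subsumed once one notes that $\phi_1+\phi_2=0$ is the only option.

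Next I would run the converse and the sign cases cleanly in Cartesian form rather than fighting the angle conventions: for $q=ck$ with $c\in\mathbb{R}^*$, one checks directly from the definition that $q\times_s q' = (cc')\,k$ whenever $q'=c'k$, because both quaternions have $\phi\in\{0,\pi\}$ and $\theta=0$, so the product has $\phi$-part $\in\{0,\pi,2\pi\}$ and lands back on the $k$-axis; choosing $c'=1/c$ (with the understanding that $c,1/c$ have the same sign, so the $\phi$-angles add to $0$ or $2\pi$, both giving $k$-direction with the correct $+$ sign after multiplying the radii) yields $q\times_s q'=k=q'\times_s q$. This establishes that every $ck$ with $c\neq 0$ is invertible. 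Conversely, the modulus identity $\|q_1\times_s q_2\|=\rho_1\rho_2$ (proved in Section~\ref{sec:preliminaries}) immediately rules out $q=0$, since then $\rho_1=0$ and no product can have modulus $1$.

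Finally, to conclude I would assemble the two directions: if $q$ has an inverse $q'$ with $q\times_s q'=k$, the modulus equation gives $\rho\neq 0$ and $\rho'=1/\rho$, and the argument above forces $\phi=0$ (or $\phi=\pi$, handled symmetrically), whence $q=ck$ for some $c\in\mathbb{R}^*$; combined with the converse computation, this is exactly the claimed characterization. The main obstacle I anticipate is purely notational: the $\arctantwo$/pole conventions make the ``$\theta_1+\theta_2\equiv 0$'' and ``$\phi_1+\phi_2\equiv 0$'' congruences slightly delicate to state precisely (e.g. one must be careful that $\phi_1+\phi_2=2\pi$ is \emph{not} allowed because $\cos$ would be $+1$ but the representation convention still pins $\phi\in[0,\pi]$, and that $\phi_1+\phi_2=\pi$ is excluded since it would give direction $-k$, not $+k$), so I would phrase the core of the argument by reducing to the modulus identity plus the single scalar equation $\cos(\phi_1+\phi_2)=1$ with $\phi_i\in[0,\pi]$, which forces $\phi_1=\phi_2=0$ and sidesteps any case analysis on $\theta$. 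Everything else is a one-line substitution into the definition of $\times_s$.
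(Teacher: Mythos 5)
There is a genuine error in your forward direction: from $\cos(\phi_1+\phi_2)=1$ with $\phi_1,\phi_2\in[0,\pi]$ you conclude $\phi_1+\phi_2=0$, explicitly ruling out $\phi_1+\phi_2=2\pi$ ``since $\cos$ must equal $+1$''. But $\cos(2\pi)=+1$, so this case cannot be excluded on those grounds; nothing in the definition of $\times_s$ restricts the \emph{sum} $\phi_1+\phi_2$ to $[0,\pi]$ (the normalization convention applies to the representation of each factor, not to the argument of the sine and cosine in the product formula). The correct conclusion is $\phi_1+\phi_2\in\{0,2\pi\}$, i.e.\ either $\phi_1=\phi_2=0$ or $\phi_1=\phi_2=\pi$. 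Your omission is not harmless: it would force every invertible element to have $\phi=0$, i.e.\ to lie on the \emph{positive} $k$-axis, which contradicts the statement being proved (all $ck$ with $c\in\mathbb{R}^*$ are invertible) and even contradicts your own converse computation, where you correctly note that for $c<0$ the angles add to $2\pi$ and the product of $ck$ with $k/c$ is $k$ (indeed $-k$ is its own inverse). The remark that the $\phi=\pi$ case ``is subsumed once one notes that $\phi_1+\phi_2=0$ is the only option'' and the closing claim that $\phi_1+\phi_2=2\pi$ ``is not allowed'' repeat the same mistake.

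The paper's proof handles exactly this point: it lists $\phi_1+\phi_2=2n\pi$, keeps both $n=0$ (forcing $\phi_1=\phi_2=0$) and $n=1$ (forcing $\phi_1=\phi_2=\pi$), and deduces that any $q_1$ with $0<\phi_1<\pi$ has no inverse, while $ck$ with $c\in\mathbb{R}^*$ has inverse $k/c$ and $0$ has none by the modulus equation. Your overall strategy (modulus identity $\rho_1\rho_2=1$ plus the single scalar equation $\cos(\phi_1+\phi_2)=1$, ignoring $\theta$ because $\sin(\phi_1+\phi_2)=0$) is essentially the same as the paper's and is sound once you admit both solutions $\phi_1=\phi_2=0$ and $\phi_1=\phi_2=\pi$; with that correction the argument goes through, since both poles correspond precisely to quaternions of the form $ck$, $c\neq 0$.
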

\begin{proof}
Consider $q_1 \in$ Im$\mathbb{H}$ and its spherical representation such that $0<\phi_1<\pi$. Therefore, $q_1$ is any pure quaternion except quaternions of the form $ck$ where $c\in \mathbb{R}$. Suppose that $q_1$ has an inverse element $q_2$, then $q_1 \times_s q_2=k$. So, since we need to have $\cos(\phi_1+\phi_2)=1$ to obtain $k$, we need to have $\phi_1+\phi_2=2n\pi$ for $n\in \mathbb{Z}$. For $n=0$, the equation is true if and only if $\phi_1=\phi_2=0$. For $n=1$, the equation is true if and only if $\phi_1=\phi_2=\pi$. The other cases are impossible in the context of the unique spherical representation. Thus, since $0<\phi_1<\pi$, by hypothesis, neither of the equalities can be true. Therefore, $q_1$ doesn't have an inverse. From this reasoning, we can easily see that the inverse of $ck$ is $\frac{k}{c}$ when $c\in\mathbb{R}^*$. For the quaternion $0$, suppose that $0 \times_s q_2 = k$. We need to have $0\cdot \rho_2 =1$. This is impossible, so $0$ doesn't have an inverse. In summary, the only quaternions that have an inverse are in the form $ck$ where $c\in\mathbb{R}^*$ and their inverses are in the form $\frac{k}{c}$.
\end{proof}
Now, we show with a counterexample why the operation is not associative. Consider $q_1 = \frac{\sqrt{2}}{2}j - \frac{\sqrt{2}}{2}k$ and $q_2=j$.
Using carefully the unicity of the spherical representation, we obtain $q_2 \times_s (q_1 \times_s q_1) = -k$ and $(q_2 \times_s q_1) \times_s q_1 = -j$. Thus, $\times_s$ is not associative and $(\textnormal{Im}\mathbb{H},\times_s)$ cannot be a monoid.
\section{The quaternionic Mandelbrot set}
\label{sec:quaternionic}
We present here a generalization of the Mandelbrot set using quaternions. But first, let's recall the definition of the Mandelbrot set. Let $P_c(z) = z^2 +c$ and denote
\[
P_{c}^{(n)}(z)=\underbrace{(P_{c}\circ P_{c}\circ\cdots\circ P_{c})}_{n\text{ times}}(z).
\]
Using the function $P_c$ we can define the standard Mandelbrot set as
$$
\mathcal{M}=\lbrace c\in\ \mathbb{C}\ |\  \lbrace P^{(n)}_c(0)\rbrace_{n\in \mathbb{N}}\ \text{is bounded}\rbrace.
$$
Now, we set $c\in \mathbb{H}$ instead and we have the following generalization.
\begin{definition}
The quaternionic Mandelbrot set is defined as
$$
\mathcal{M}_{\mathbb{H}}=\lbrace c\in\ \mathbb{H}\ |\  \lbrace P^{(n)}_c(0)\rbrace_{n\in \mathbb{N}}\ \text{is bounded}\rbrace.
$$
\end{definition}
Since $\mathcal{M}_{\mathbb{H}}$ is a four-dimensional object, we can only visualize it through 3D projections on three-dimensional subspaces of $\mathbb{H}$. To classify these subspaces, we will use the same notation as these references \cite{BrouilletteRochon, GarantPelletier, RochonParise, VallieresRochon}. This brings us to the following definitions.
\begin{definition}
Let $\im{k},\im{l},\im{m}\in\{1, i, j, k \}$ with $\im{k}\neq\im{l},\,\im{k}\neq\im{m}$ and $\im{l}\neq\im{m}$. The space
\[
\mathbb{H} (\im{k},\im{l},\im{m}) := \vspan\{\im{k},\im{l},\im{m}\}
\]
is the vector subspace of $\mathbb{H}$ consisting of all real finite linear combinations of these three distinct units.
\end{definition}
\begin{definition}
Let $\im{k},\im{l},\im{m}\in\{1,i, j, k \}$ with $\im{k}\neq\im{l},\,\im{k}\neq\im{m}$ and $\im{l}\neq\im{m}$. We define a principal 3D slice of the quaternionic Mandelbrot set $\mathcal{M}_{\mathbb{H}}$ as
\begin{align*}
\mathcal{H}(\im{k},\im{l},\im{m}) &= \{c\in\mathbb{H}(\im{k},\im{l},\im{m}) : \suite\text{ is bounded}\} \\
&= \mathbb{H}(\im{k},\im{l},\im{m})\cap\mathcal{M}_\mathbb{H}.
\end{align*}
\end{definition}
Before we present the 3D slices, we present the following result that can be found in \cite{wang}.
\begin{theorem}
\label{theo:lienmandelbrot}
Consider $q=q_0+q_1i+q_2j+q_3k$ where $\rho=\sqrt{q_1^2+q_2^2+q_3^2}$. We have
$$
q \in \mathcal{M}_{\mathbb{H}} \Leftrightarrow c = q_0 + \rho i \in \mathcal{M}.
$$
\end{theorem}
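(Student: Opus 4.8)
The statement relates the full four-dimensional quaternionic Mandelbrot set to the classical complex one via the map $q = q_0 + q_1 i + q_2 j + q_3 k \mapsto q_0 + \rho i$ with $\rho = \sqrt{q_1^2+q_2^2+q_3^2}$. The plan is to track how the iteration $P_c^{(n)}(0)$ behaves under this correspondence, and the central observation is that the orbit of $0$ under $P_c(z) = z^2 + c$ with $c = q_0 + \rho i$ stays inside the complex plane $\mathbb{C} = \vspan\{1, i\}$, while the orbit of $0$ under $P_q$ with quaternionic parameter $q$ stays inside a well-chosen complex subfield of $\mathbb{H}$. Concretely, write $c$ (the quaternion) as $q_0 + \mathbf{q}$ where $\mathbf{q} = q_1 i + q_2 j + q_3 k$, and set $\mathbf{p} = \mathbf{q}/\rho$ (assuming $\mathbf{q}\neq 0$; the case $\mathbf{q}=0$ is the real axis and is trivial since then $c$ is real and the two iterations literally coincide). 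Then $\mathbf{p}^2 = -1$, so $\mathbb{C}_{\mathbf p} := \vspan\{1, \mathbf{p}\}$ is a subalgebra of $\mathbb{H}$ isomorphic to $\mathbb{C}$ via the $\mathbb{R}$-algebra isomorphism $\Psi : a + b\mathbf{p} \mapsto a + bi$.

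First I would show by induction that $P_q^{(n)}(0) \in \mathbb{C}_{\mathbf p}$ for all $n$. The base case $P_q^{(0)}(0) = 0 \in \mathbb{C}_{\mathbf p}$ is immediate. For the inductive step, if $w = a + b\mathbf p \in \mathbb{C}_{\mathbf p}$, then $w^2 + c = (a^2 - b^2 + q_0) + (2ab + \rho)\mathbf p$, using that multiplication within $\mathbb{C}_{\mathbf p}$ behaves like complex multiplication (which follows from $\mathbf p^2 = -1$ together with the definition of quaternion multiplication), and that $c = q_0 + \rho\mathbf p \in \mathbb{C}_{\mathbf p}$. Hence $P_q^{(n+1)}(0) = (P_q^{(n)}(0))^2 + c \in \mathbb{C}_{\mathbf p}$. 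Next I would observe that $\Psi$ is an $\mathbb{R}$-algebra isomorphism carrying $c = q_0 + \rho\mathbf p$ to $q_0 + \rho i$, so it intertwines the two iterations: $\Psi(P_q^{(n)}(0)) = P_{q_0 + \rho i}^{(n)}(0)$ for all $n$, again by a trivial induction. Since $\Psi$ is an isometry ($\|a + b\mathbf p\| = \sqrt{a^2+b^2} = |a+bi|$, because $\mathbf p$ is a unit pure quaternion orthogonal to $1$), boundedness of $\{P_q^{(n)}(0)\}$ in $\mathbb{H}$ is equivalent to boundedness of $\{P_{q_0+\rho i}^{(n)}(0)\}$ in $\mathbb{C}$. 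This gives exactly $q \in \mathcal{M}_{\mathbb{H}} \Leftrightarrow q_0 + \rho i \in \mathcal{M}$.

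The only genuinely delicate point is the verification that the copy $\mathbb{C}_{\mathbf p}$ really is closed under quaternion multiplication and that multiplication there matches $\mathbb{C}$; this is a short computation using $\mathbf p \mathbf p = -\|\mathbf p\|^2 = -1$ for a pure quaternion $\mathbf p$ (equivalently, reading off the quaternion product formula given in Section~\ref{sec:preliminaries} with both arguments having zero real part and parallel vector parts). Everything else is bookkeeping. I would also handle the edge case $\rho = 0$ separately at the start: then $q = q_0$ is real, $c = q_0 + 0\cdot i = q_0$, and the quaternionic and complex iterations of $z^2 + q_0$ starting from $0$ are identical sequences of real numbers, so both boundedness conditions coincide trivially. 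The reference \cite{wang} is cited for this theorem, so I expect the author's proof to follow essentially this subalgebra/isomorphism argument, possibly phrased in terms of an explicit rotation bringing $\mathbf q$ onto the $i$-axis rather than an abstract isomorphism.
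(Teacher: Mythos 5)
Your proposal is correct, but note that the paper contains no proof of Theorem~\ref{theo:lienmandelbrot} to compare against: the result is imported from the reference \cite{wang} and stated without argument. Your route — setting $\mathbf{p}=\mathbf{q}/\rho$, checking $\mathbf{p}^2=-1$, noting that $\mathrm{span}_{\mathbb{R}}\{1,\mathbf{p}\}$ is a commutative subalgebra of $\mathbb{H}$ isometrically isomorphic to $\mathbb{C}$ via $a+b\mathbf{p}\mapsto a+bi$, showing the orbit of $0$ under $z\mapsto z^2+q$ stays in that subalgebra, and using that the isomorphism intertwines the two iterations and preserves norms — is the standard argument for this fact, and your separate treatment of the degenerate case $\rho=0$ closes the only edge case. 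So what you have written is a complete, self-contained proof of the cited theorem; the only thing to adjust is the expectation that the authors' own proof follows it, since they give none.
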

We have four possible combinations of $\im{k}, \im{l}, \im{m} \in \{1,i, j, k \}$ and the corresponding 3D slices are $\mathcal{H}(1,i,j)$, $\mathcal{H}(1,i,k)$, $\mathcal{H}(1,j,k)$ and $\mathcal{H}(i,j,k)$.
\begin{theorem}
\label{theo:quatslice}
The 3D slices $\mathcal{H}(1,i,j)$, $\mathcal{H}(1,i,k)$ and $\mathcal{H}(1,j,k)$ are rotations of the classical Mandelbrot set $\mathcal{M}$ around the real axis. 
\end{theorem}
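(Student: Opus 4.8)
The plan is to reduce each of the three slices $\mathcal{H}(1,i,j)$, $\mathcal{H}(1,i,k)$, $\mathcal{H}(1,j,k)$ to the classical Mandelbrot set via Theorem \ref{theo:lienmandelbrot}, which already tells us exactly which quaternions $q = q_0 + q_1 i + q_2 j + q_3 k$ lie in $\mathcal{M}_{\mathbb{H}}$: namely those for which $q_0 + \rho i \in \mathcal{M}$, where $\rho = \sqrt{q_1^2 + q_2^2 + q_3^2}$. So the membership condition depends on $q$ only through the pair $(q_0, \rho)$, i.e. through the real part and the Euclidean norm of the vector part.

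First I would treat the representative slice $\mathcal{H}(1,i,j)$. A point there is $c = q_0 + q_1 i + q_2 j$ (with $q_3 = 0$), so $\rho = \sqrt{q_1^2 + q_2^2}$ is exactly the distance from $c$ to the real axis measured in the $(i,j)$-plane. By Theorem \ref{theo:lienmandelbrot}, $c \in \mathcal{H}(1,i,j)$ iff $q_0 + \rho i \in \mathcal{M}$. Now observe that $\mathcal{M} \subset \mathbb{C}$ is symmetric about the real axis (since $P_{\bar c}^{(n)}(0) = \overline{P_c^{(n)}(0)}$), so $q_0 + \rho i \in \mathcal{M}$ iff $q_0 + r i \in \mathcal{M}$ for $r = \pm\rho$; that is, membership only depends on $|{\rm Im}|= \rho$. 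Hence $\mathcal{H}(1,i,j)$ is precisely the set obtained by taking the intersection $\mathcal{M} \cap \{q_0 + ri : r \geq 0\}$ (the "upper half" of $\mathcal{M}$, which by symmetry determines all of $\mathcal{M}$) and sweeping it around the real axis: for each $(q_0, \rho)$ with $q_0 + \rho i \in \mathcal{M}$, the full circle $\{q_0 + q_1 i + q_2 j : q_1^2 + q_2^2 = \rho^2\}$ lies in the slice. This is exactly the solid of revolution generated by rotating $\mathcal{M}$ about the real axis, so $\mathcal{H}(1,i,j)$ is that rotation (body of revolution) of $\mathcal{M}$.

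Next I would note that the arguments for $\mathcal{H}(1,i,k)$ and $\mathcal{H}(1,j,k)$ are verbatim the same: in each case the two non-real units span a plane orthogonal to the real axis, $\rho$ is the in-plane distance to the real axis, and Theorem \ref{theo:lienmandelbrot} together with the symmetry of $\mathcal{M}$ gives the identical conclusion. One can phrase this uniformly: for any choice $\im{k} = 1$, $\{\im{l}, \im{m}\} \subset \{i,j,k\}$, the map $(q_0, q_l, q_m) \mapsto (q_0, \sqrt{q_l^2+q_m^2})$ sends $\mathcal{H}(1,\im{l},\im{m})$ onto (the upper half of) $\mathcal{M}$, and the fibers are circles, which is the definition of a surface/solid of revolution about the real axis.

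The only mild subtlety — and the step I would be most careful about — is the degenerate fiber: when $\rho = 0$ the "circle" is a single point on the real axis, so one must check that the real slice $\mathcal{M} \cap \mathbb{R} = [-2, 1/4]$ is consistent with both the rotation picture and Theorem \ref{theo:lienmandelbrot} (it is: $q_0 + 0 i = q_0 \in \mathcal{M}$ iff $q_0 \in [-2,1/4]$). Beyond that, the proof is essentially a two-line invocation of Theorem \ref{theo:lienmandelbrot} plus the reflection symmetry of $\mathcal{M}$; there is no real obstacle, only the bookkeeping of stating precisely what "rotation of $\mathcal{M}$ around the real axis" means as a subset of $\mathbb{H}(\im{k},\im{l},\im{m})$.
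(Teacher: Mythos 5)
Your proposal is correct and follows the same route as the paper: the paper's proof is exactly the specialization of Theorem \ref{theo:lienmandelbrot} to quaternions of the form $q=q_0+q_1\im{l}+q_2\im{m}$ with the remaining component zero, giving $q\in\mathcal{M}_{\mathbb{H}}\Leftrightarrow q_0+i\sqrt{q_1^2+q_2^2}\in\mathcal{M}$. The extra details you supply (the reflection symmetry of $\mathcal{M}$, the circular fibers, and the degenerate fiber on the real axis) are just an explicit unpacking of the phrase ``rotation of $\mathcal{M}$ around the real axis,'' which the paper leaves implicit.
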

\begin{proof}
We need to show that for every quaternion $q= q_0 +q_1\im{m}+q_2\im{l}$ we have
$$
q \in \mathcal{M}_{\mathbb{H}} \Leftrightarrow q_0 + i \sqrt{q_1^2+q_2^2} \in \mathcal{M}
$$
where $\im{k}, \im{l}\in \{ i,j,k\}$ with $\im{k} \neq \im{l}$. We show it directly using Theorem \ref{theo:lienmandelbrot} in the particular case where $q\in \mathbb{H}$ is in the form $q= q_0 +q_1\im{k}+q_2\im{l}$.
\end{proof}
\begin{figure}[htp]
\centering
\includegraphics[width=.3\textwidth]{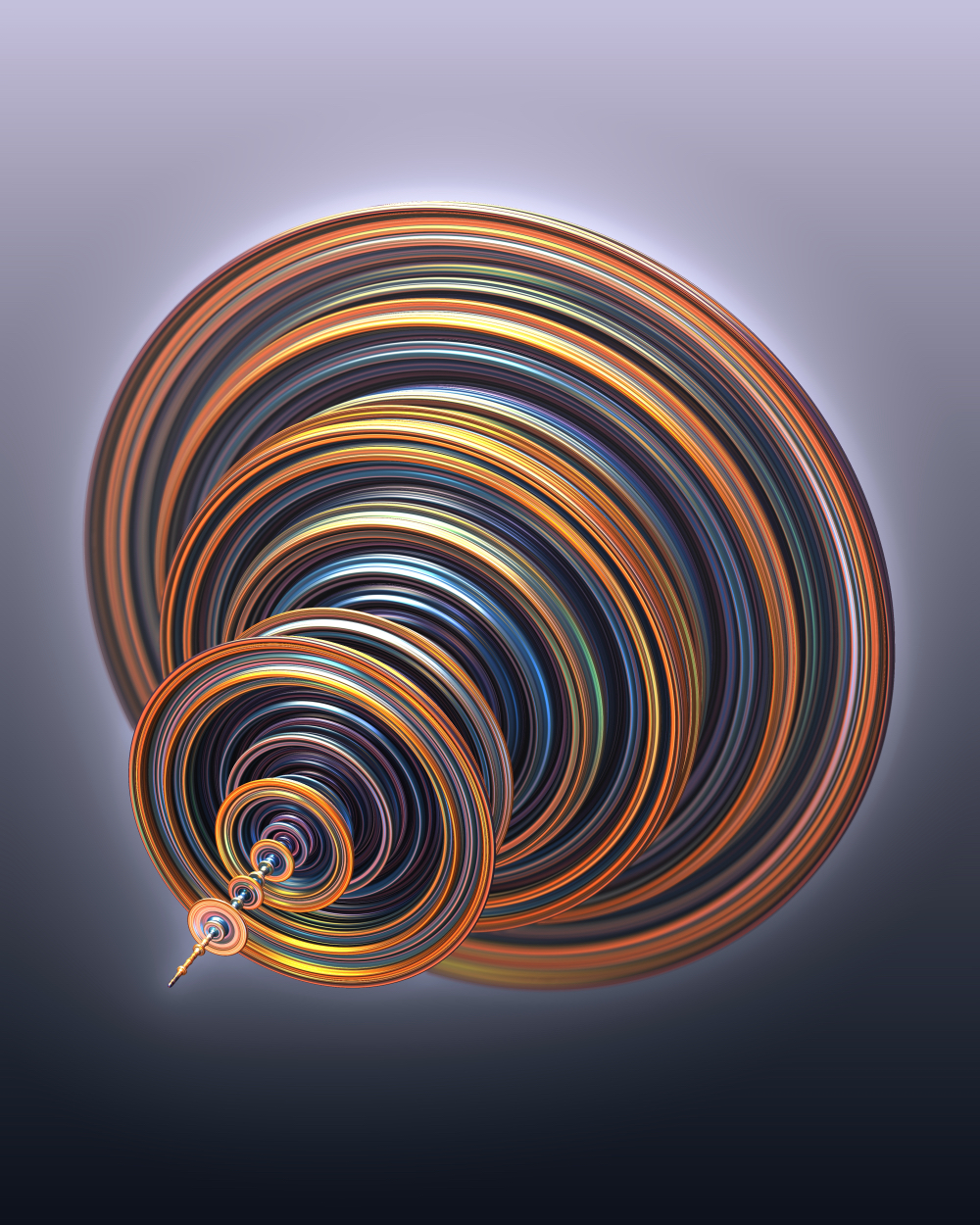}\hfill
\includegraphics[width=.3\textwidth]{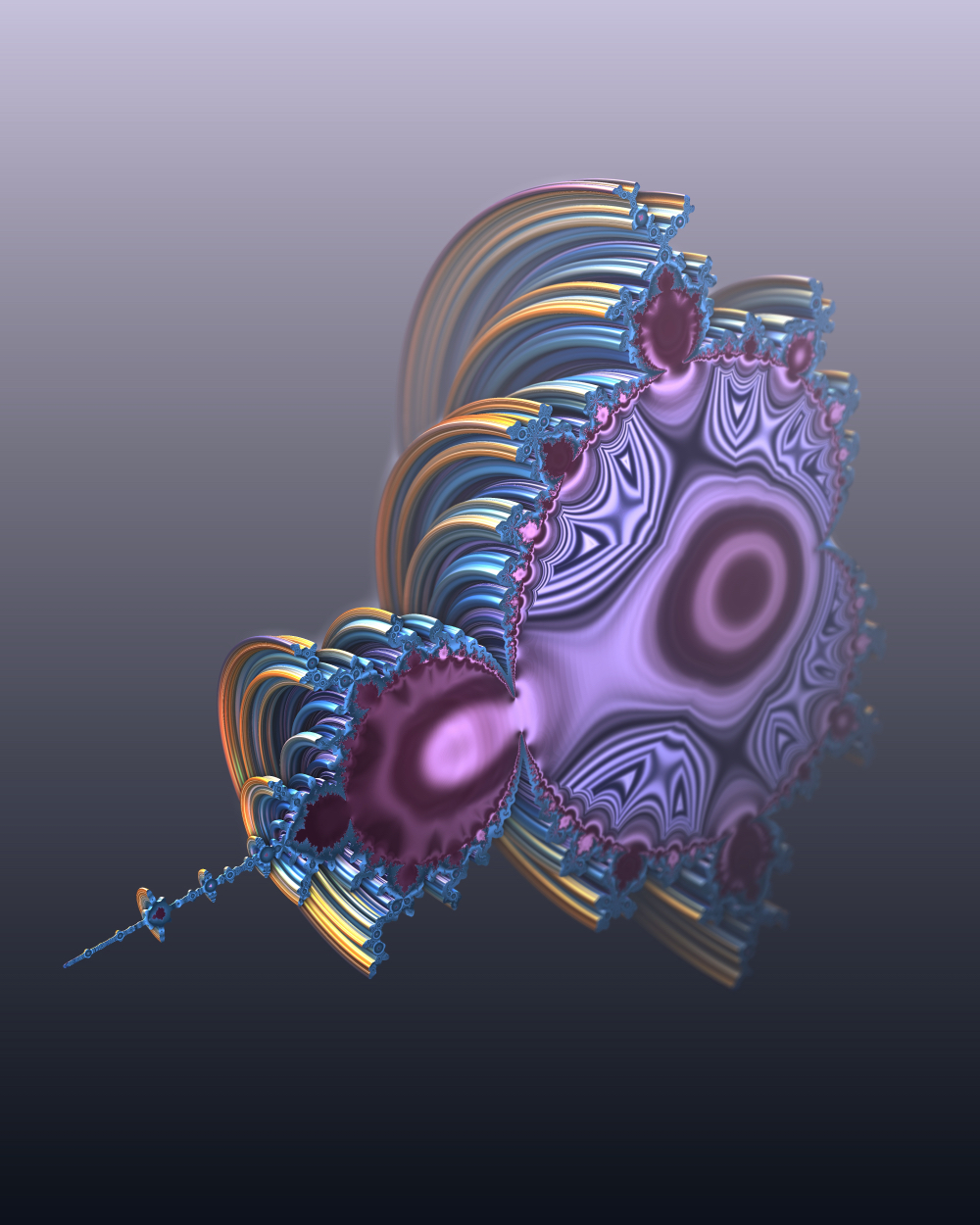}\hfill
\includegraphics[width=.3\textwidth]{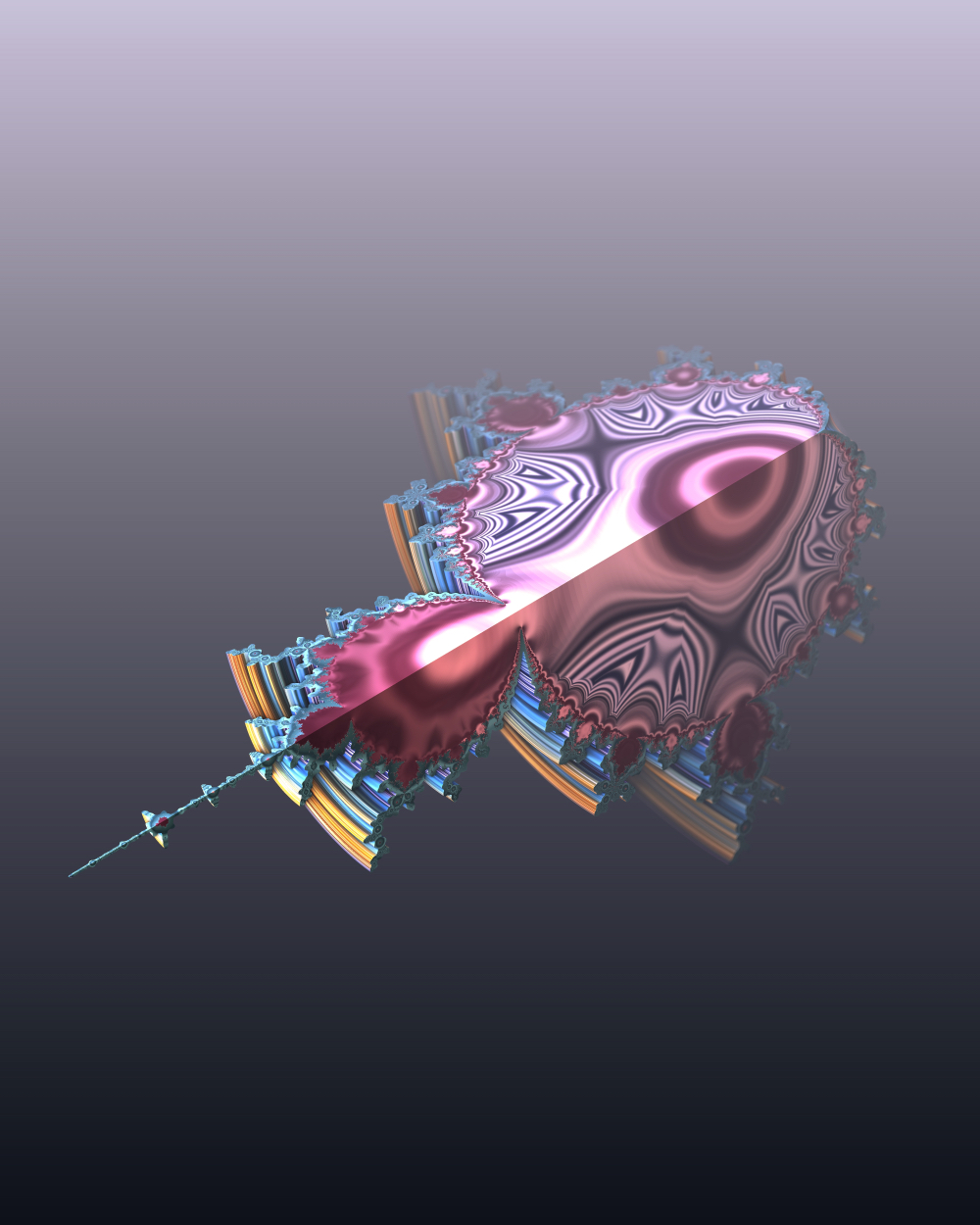}
\caption{\label{fig:mandelquat}Representation of the 3D slice $\mathcal{H}(1,i,j)$.The internal dynamics is provided by an escape time algorithm from the Mandelbulb 3D software.}
\end{figure}
Figure \ref{fig:mandelquat} shows the slice $\mathcal{H}(1,i,j)$. We can see that it is a rotation of the standard Mandelbrot set around the real axis. Now, we present the 3D slice $\mathcal{H}(i,j,k)$. We found that the result in \cite{wang} is incorrect. The slice is not a sphere but is contained in a closed ball. 
\begin{figure}[htp]
\centering
\includegraphics[scale=0.07]{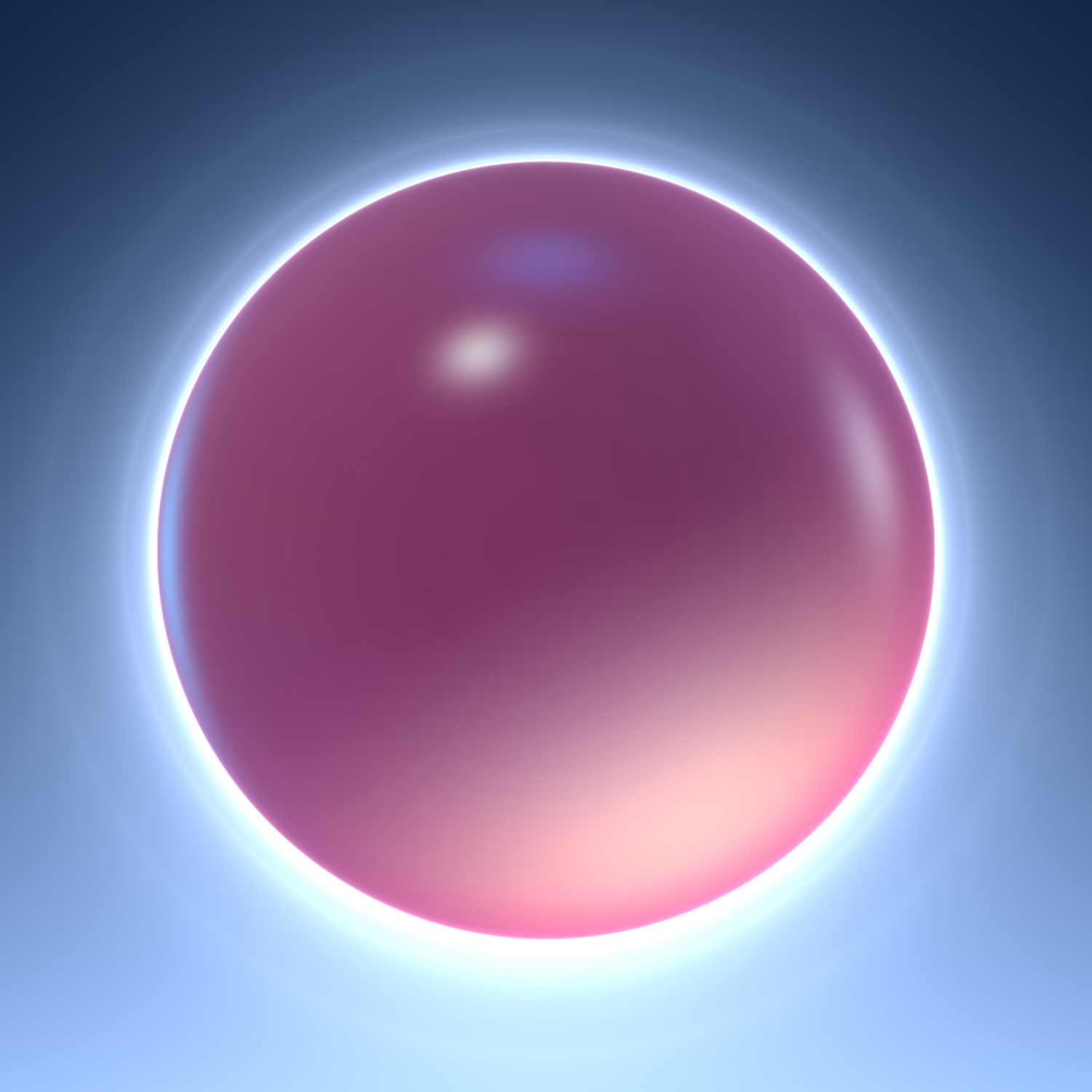}\hspace{10mm}
\includegraphics[scale=0.07]{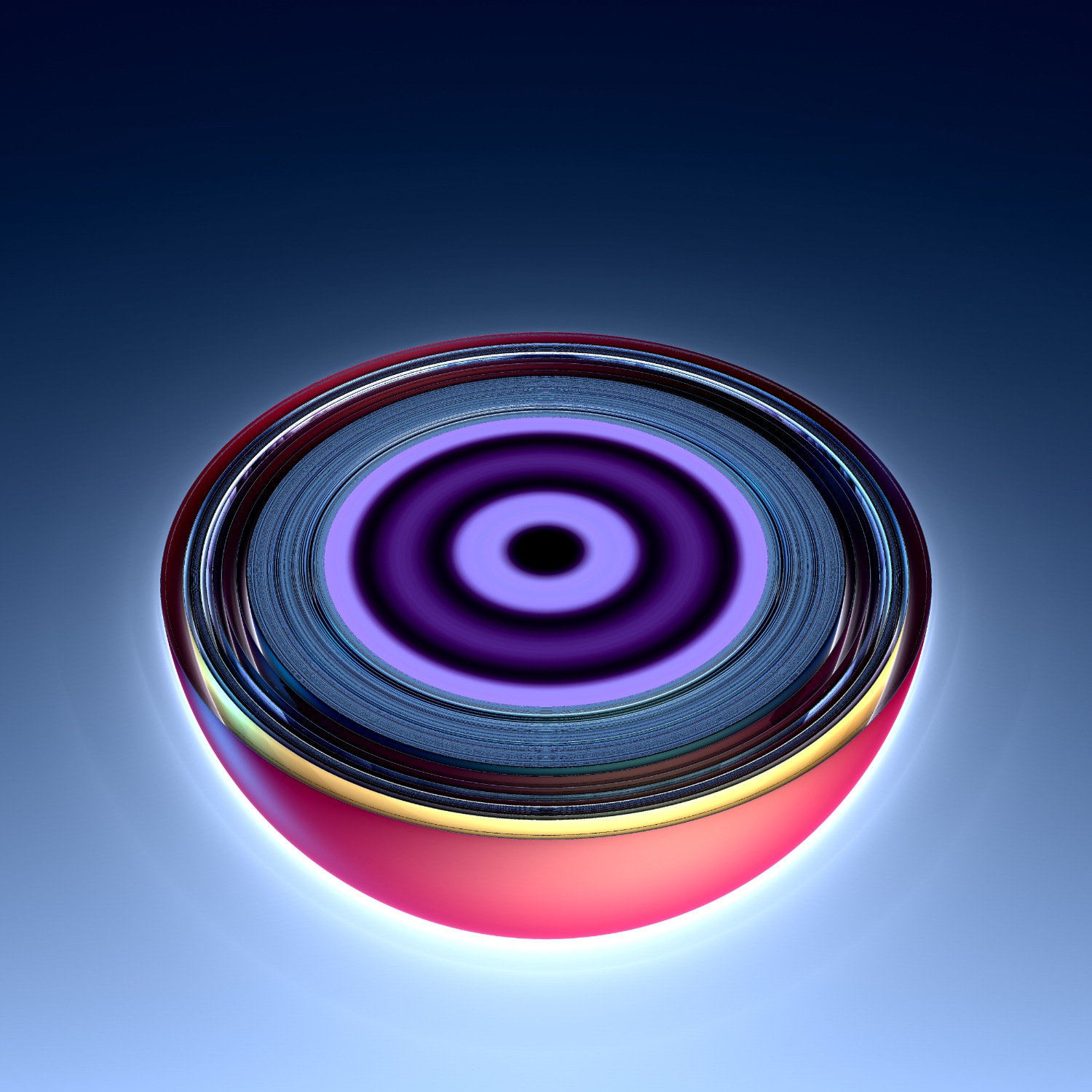}\hfill
\caption{\label{fig:mandelquatijk}Representation of the Metasphere, the 3D slice $\mathcal{H}(i,j,k)$.}
\end{figure}
\begin{figure}[htp]
\centering
\includegraphics[scale=0.12]{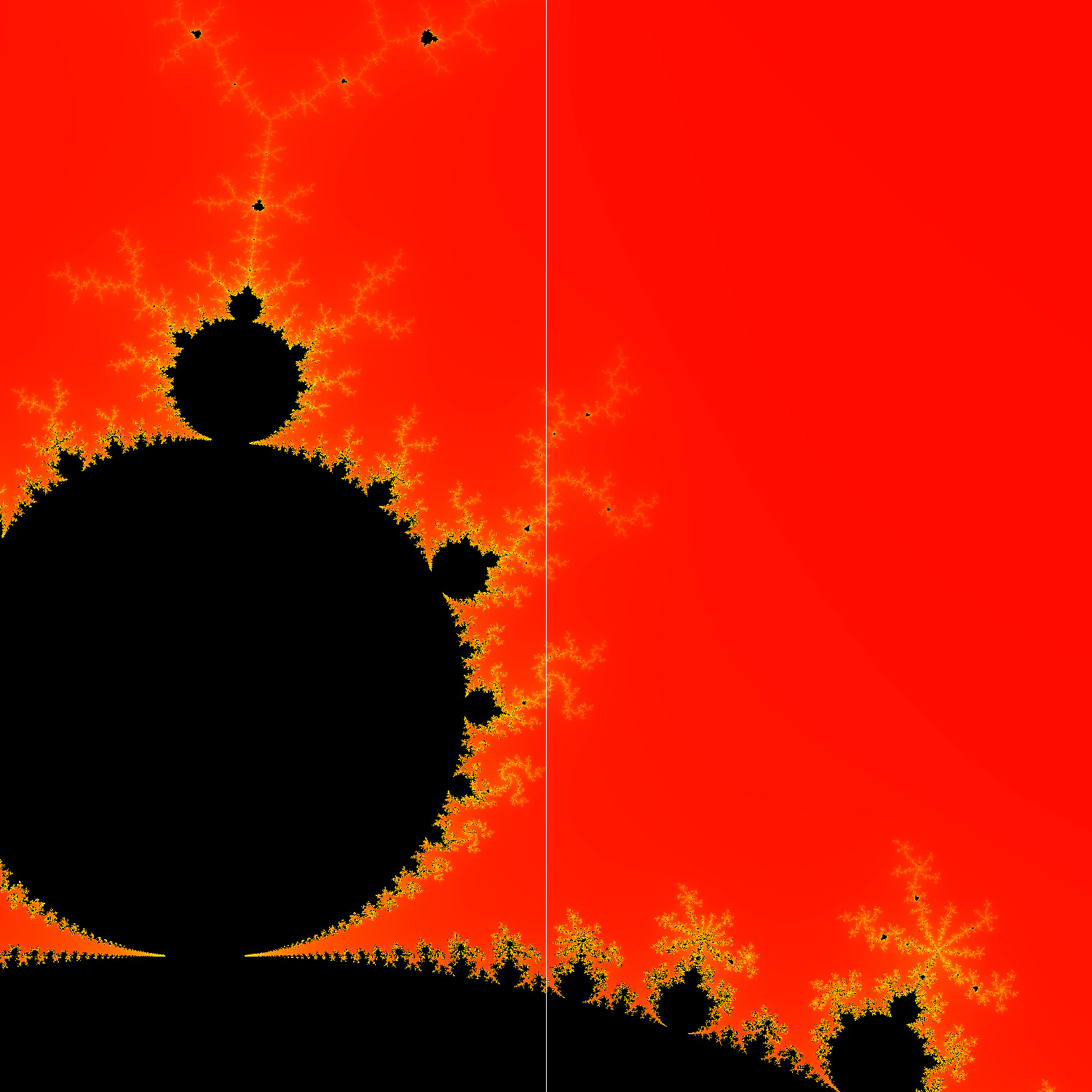}
\caption{\label{fig:axesmandel}Zoom of the intersection between imaginary axis (in white) and the Mandelbrot set $\mathcal{M}$.}
\end{figure}

\begin{theorem}
The 3D slice $\mathcal{H}(i,j,k)$ is contained in a closed ball.
\end{theorem}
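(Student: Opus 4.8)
The plan is to reduce this three-dimensional question to the one-dimensional structure of the classical Mandelbrot set via Theorem~\ref{theo:lienmandelbrot}. A point of the slice $\mathcal{H}(i,j,k)$ is a pure quaternion $q = q_1 i + q_2 j + q_3 k$, so in the notation of Theorem~\ref{theo:lienmandelbrot} its real part vanishes ($q_0 = 0$) and the quantity $\rho = \sqrt{q_1^2 + q_2^2 + q_3^2}$ is exactly the quaternionic norm $\|q\|$. The first step is therefore to record that $q \in \mathcal{H}(i,j,k)$ if and only if $\|q\|\, i \in \mathcal{M}$; in words, the slice consists of precisely those pure quaternions whose norm, placed on the imaginary axis, lands inside the classical Mandelbrot set.

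The second step is to invoke the classical bound $\mathcal{M} \subseteq \{ z \in \mathbb{C} : |z| \leq 2 \}$, which follows from the usual escape estimate (once $|z_n| \geq \max(|c|, 2)$ one has $|z_{n+1}| \geq |z_n|^2 - |c| \geq |z_n|(|z_n| - 1)$, forcing $|P_c^{(n)}(0)| \to \infty$, so any $c$ with $|c| > 2$ is excluded). Combining with the first step, any $q \in \mathcal{H}(i,j,k)$ satisfies $\|q\| = \big|\, \|q\|\, i \,\big| \leq 2$, so $\mathcal{H}(i,j,k)$ is contained in the closed ball of radius $2$ about the origin of $\textnormal{Im}\,\mathbb{H}$; this already proves the theorem.

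For the sharper statement — and to see that the slice is strictly smaller than the sphere (or ball) of radius $2$ asserted in \cite{wang} — I would set $R := \sup\{ t \geq 0 : t i \in \mathcal{M} \}$. Since $0 \in \mathcal{M}$ this set is nonempty, and since $\mathcal{M}$ is bounded we get $R \leq 2$; the same reasoning as above then yields the refined inclusion $\mathcal{H}(i,j,k) \subseteq \{ q \in \textnormal{Im}\,\mathbb{H} : \|q\| \leq R \}$. The behaviour of $\mathcal{M} \cap i\mathbb{R}$ depicted in Figure~\ref{fig:axesmandel} shows that $R < 2$, which rules out the ``Metasphere'' being a genuine sphere of radius $2$.

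The computations here are entirely routine; essentially the only thing to check with care is the passage through Theorem~\ref{theo:lienmandelbrot}, namely that on this particular slice the parameter $\rho$ of that theorem coincides with $\|q\|$ precisely because the real coordinate is zero. After that, boundedness of $\mathcal{M}$ does all the work. The one genuinely delicate point — determining the exact value of $R$, i.e.\ exactly where the imaginary axis exits $\mathcal{M}$ — is not needed for the containment statement and can be left to the discussion accompanying Figure~\ref{fig:axesmandel}.
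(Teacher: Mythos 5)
Your proposal is correct and follows essentially the same route as the paper: both reduce the question through Theorem~\ref{theo:lienmandelbrot} with $q_0=0$, so that membership of a pure quaternion $q$ in the slice is equivalent to $\|q\|\,i$ lying in $\mathcal{M}\cap i\mathbb{R}$, and then conclude containment in a closed ball from the boundedness of that intersection. The only cosmetic difference is that you first invoke the classical bound $|c|\leq 2$ and then refine to the radius $R=\sup\{t\geq 0 : ti\in\mathcal{M}\}$, whereas the paper works with the maximal such $R$ directly (using that $\mathcal{M}$ is closed and bounded); the substance is the same.
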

\begin{proof}
For all pure quaternions $q$, using Theorem \ref{theo:lienmandelbrot}, we have
$$
q \in \mathcal{M}_{\mathbb{H}} \Leftrightarrow \|q\|i \in \mathcal{M}.
$$
So, $q\in \mathcal{M}_{\mathbb{H}}$ if and only if $\|q\|i$ is in the intersection between $\mathcal{M}$ and the complex imaginary axis. Let $R=$max$\lbrace xi \in \mathcal{M} \rbrace$. This maximum exists since $\mathcal{M}$ is closed. Thus, if $\|q\| = R$, then $\|q\|i \in \mathcal{M}$ and so $q \in \mathcal{M}_{\mathbb{H}}$. However, if  $\|q\| > R$, then $\|q\|i \notin \mathcal{M}$ and so $q \notin \mathcal{M}_{\mathbb{H}}$.
We conclude that the 3D slice $\mathcal{H}(i,j,k)$ is contained in a closed ball of radius $R$.
\end{proof}
Figure \ref{fig:mandelquatijk} shows the slice $\mathcal{H}(i,j,k)$, we call it the \textit{Metasphere}. We can see that the slice is contained in a 3D ball. However, if we cut this ball, we can see several nested balls. In fact, there is a sphere for each intersection point between the Mandelbrot set $\mathcal{M}$ and the imaginary axis. Since $\mathcal{M}\cap i\mathbb{R}$ is well known to be disconnected, as we can see in Figure \ref{fig:axesmandel}, we obtain empty spaces between the nested balls.

In summary, the quaternionic Mandelbrot set has two principal slices which are $\mathcal{H}(1,i,j)$ and $\mathcal{H}(i,j,k)$. These slices are respectively the rotation of the standard Mandelbrot set around the real axis and the Metasphere. We note that this result contrasts with the three principal 3D slices (Tetrabrot, Arrowheadbrot and Mousebrot) of the Mandelbrot set generalized in the bicomplex space \cite{BrouilletteRochon, VallieresRochon}.

\section{Generalized Mandelbrot set in true 3D}\label{sec:mandelbrot}
\subsection{The spherical Mandelbrot set}
In this section, we generalize the Mandelbrot set using the spherical product we defined in section \ref{sec:spherical}. First of all, we need to define what is a spherical polynomial using the spherical power.
%
\begin{definition}
Let $q$ be a pure quaternion in spherical representation. The spherical power is noted:
$$
sp_n(q):=\rho^n\left(i\sin(n\phi)\cos(n\theta)+j\sin(n\phi)\sin(n\theta)+k\cos(n\phi)\right).
$$
\end{definition}
\begin{definition}
A spherical polynomial is defined as
$$P(q):= a_n \times_s (sp_n(q)) + a_{n-1} \times_s (sp_{n-1}(q)) + ... + a_1\times_s q + a_0 $$ where $a_n \neq 0$, $a_0,...,a_n \in$ \textnormal{Im}$\mathbb{H}$, $ q \in$ \textnormal{Im}$\mathbb{H}$ and $n \in \mathbb{N}$ is the degree of the spherical polynomial $P(q)$.
\end{definition}
Obviously, the spherical power $sp_2(q)$ is equivalent to the spherical product $q \times_s q$. Now, we define the spherical Mandelbrot set. Let's consider the spherical polynomial $Q_c(q)= sp_2(q) + c$ where $q, c \in$ Im$\mathbb{H}$.
\begin{definition}
The spherical Mandelbrot set is defined as
$$\mathcal{M}_{\textnormal{Im}\mathbb{H}}:=\lbrace c\in\  \textnormal{Im}\mathbb{H}\ |\  \lbrace Q^{(n)}_c(0)\rbrace_{n\in \mathbb{N}}\ \text{is bounded}\rbrace .$$
\end{definition}

%
%
\begin{theorem}
\label{mnormspher}
Let $n\in\mathbb{N}$. If $q$ is a pure quaternion then
$$
\| sp_{n}(q) \| = \|q\|^n.
$$
\end{theorem}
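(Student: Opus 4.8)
The plan is to compute the modulus of $sp_n(q)$ directly from its definition, exactly mirroring the computation already carried out in Section~\ref{sec:preliminaries} for the spherical representation of a pure quaternion. Recall that for $q$ a pure quaternion with $\rho = \|q\| \neq 0$ we have the spherical representation, and the stated definition gives
$$
sp_n(q) = \rho^n\left(i\sin(n\phi)\cos(n\theta) + j\sin(n\phi)\sin(n\theta) + k\cos(n\phi)\right).
$$
Since $sp_n(q)$ is again a pure quaternion written in Cartesian form with components $\rho^n\sin(n\phi)\cos(n\theta)$, $\rho^n\sin(n\phi)\sin(n\theta)$, $\rho^n\cos(n\phi)$, its Euclidean modulus is the square root of the sum of the squares of these three components.

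First I would factor out $\rho^{2n}$ from under the square root, leaving $\sqrt{\sin^2(n\phi)\cos^2(n\theta) + \sin^2(n\phi)\sin^2(n\theta) + \cos^2(n\phi)}$. Then I would factor $\sin^2(n\phi)$ out of the first two terms to get $\sin^2(n\phi)(\cos^2(n\theta) + \sin^2(n\theta)) + \cos^2(n\phi)$, apply the Pythagorean identity $\cos^2(n\theta) + \sin^2(n\theta) = 1$, and then apply it once more to $\sin^2(n\phi) + \cos^2(n\phi) = 1$. This yields $\|sp_n(q)\| = \sqrt{\rho^{2n}} = \rho^n = \|q\|^n$, using the earlier observation that $\rho = \|q\|$.

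Finally I would dispose of the degenerate case separately: if $q = 0$ then $\rho = 0$ and by the convention fixing the spherical representation $(0,0,0)$ for the zero quaternion, $sp_n(q) = 0$, so $\|sp_n(q)\| = 0 = \|q\|^n$ holds trivially (for $n \geq 1$; note $n\in\mathbb{N}$ here is taken to start at $1$, or one checks $n=0$ gives $sp_0(q)=k$ and the statement would need $\|q\|=1$, so presumably $\mathbb{N}$ excludes $0$ in this context). I do not expect any real obstacle here — the argument is a one-line trigonometric identity chain identical in structure to the norm computation already displayed in the preliminaries; the only point requiring a word of care is the handling of the poles and the zero quaternion, where the representation is fixed by convention rather than by the generic formulas, but in all those cases the identity $\|sp_n(q)\| = \|q\|^n$ is immediate by inspection.
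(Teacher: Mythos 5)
Your proof is correct and follows essentially the same route as the paper's: both compute $\|sp_n(q)\|$ directly from the definition, reducing it to the fact that $\| i\sin(n\phi)\cos(n\theta) + j\sin(n\phi)\sin(n\theta) + k\cos(n\phi)\| = 1$ via the Pythagorean identities, giving $\rho^n = \|q\|^n$. Your extra remarks on the zero quaternion and the poles are harmless additional care but do not change the argument.
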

\begin{proof}
Consider $q$ a pure quaternion. By definition of the spherical representation $\|q\|=\rho$. Moreover, we have
$$
sp_n(q) = \rho^n(i\sin n\phi\cos n\theta + j\sin n\phi\sin n\theta + k\cos n\phi).
$$
We can conclude that $\| sp_n(q) \| = \rho^n$, since 
$$
\| i\sin n\phi\cos n\theta + j\sin n\phi\sin n\theta + k\cos n\phi\|=1
$$
Therefore, we have $\| sp_n(q)\| = \|q\|^n  = \rho^n$.
\end{proof}
The following lemma is analogous to a lemma in complex numbers and will be used to prove an important theorem to generate images of the spherical Mandelbrot set. 
\begin{lemma}
\label{prealableborne}
Consider the spherical polynomial $Q_c(q) = sp_2(q) + c$ where $q, c \in$ \textnormal{Im}$\mathbb{H}$ with $\|c\|> 2$, then $\|Q^{(n)}_c(0)\| \geq \|c\|(\|c\|-1)^{n-1}$ with $n\geq 1$.
\end{lemma}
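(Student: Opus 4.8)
The plan is to mimic the classical escape argument for the complex Mandelbrot set, using Theorem \ref{mnormspher} to control the norm of the spherical square. The key observation is that for any pure quaternion $q$, the triangle inequality gives $\|Q_c(q)\| = \|sp_2(q) + c\| \geq \|sp_2(q)\| - \|c\| = \|q\|^2 - \|c\|$, and in the reverse direction $\|Q_c(q)\| \geq \|c\| - \|sp_2(q)\| = \|c\| - \|q\|^2$. I would proceed by induction on $n$, with the base case $n=1$ being immediate: $Q_c^{(1)}(0) = Q_c(0) = sp_2(0) + c = c$, so $\|Q_c^{(1)}(0)\| = \|c\| = \|c\|(\|c\|-1)^0$.

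For the inductive step, assume $\|Q_c^{(n)}(0)\| \geq \|c\|(\|c\|-1)^{n-1}$ for some $n \geq 1$. Write $w = Q_c^{(n)}(0)$, so that $Q_c^{(n+1)}(0) = sp_2(w) + c$. Then
\[
\|Q_c^{(n+1)}(0)\| = \|sp_2(w) + c\| \geq \|sp_2(w)\| - \|c\| = \|w\|^2 - \|c\|.
\]
Now I want to show $\|w\|^2 - \|c\| \geq \|c\|(\|c\|-1)^n$. Using the inductive hypothesis $\|w\| \geq \|c\|(\|c\|-1)^{n-1} =: A$, and noting $A \geq \|c\| > 2$ (since $(\|c\|-1)^{n-1} \geq 1$), we have $\|w\|^2 \geq A^2$, so it suffices to check $A^2 - \|c\| \geq \|c\|(\|c\|-1)^n = A(\|c\|-1)$. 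This rearranges to $A^2 - A(\|c\|-1) - \|c\| \geq 0$, i.e. $A\big(A - (\|c\|-1)\big) \geq \|c\|$. Since $A \geq \|c\|$ we get $A - (\|c\|-1) \geq \|c\| - (\|c\|-1) = 1$, hence $A\big(A-(\|c\|-1)\big) \geq A \cdot 1 = A \geq \|c\|$, which is exactly what is needed.

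The only subtlety — and the place to be careful rather than a genuine obstacle — is to verify that all the norm inequalities used are legitimate: the norm here is the Euclidean norm on $\mathrm{Im}\,\mathbb{H}$, the triangle inequality $\|x+y\| \geq \|x\| - \|y\|$ holds for it, and $\|sp_2(w)\| = \|w\|^2$ is precisely Theorem \ref{mnormspher} with $n=2$. One should also confirm the chain $A \geq \|c\|$ used twice: it follows from $(\|c\|-1)^{n-1} \geq 1$, which holds because $\|c\| > 2$ implies $\|c\| - 1 > 1$. Assembling these pieces completes the induction and hence the lemma.
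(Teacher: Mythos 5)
Your proof is correct and follows essentially the same route as the paper's: induction on $n$, the reverse triangle inequality, and $\|sp_2(w)\|=\|w\|^2$ from Theorem \ref{mnormspher}. The only difference is cosmetic — you verify the final inequality by rearranging it into $A\bigl(A-(\|c\|-1)\bigr)\geq\|c\|$, whereas the paper drops a factor of $(\|c\|-1)^{k-1}\geq 1$ and bounds $\|c\|\leq\|c\|(\|c\|-1)^{k-1}$ directly; both are valid.
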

\begin{proof}
We prove it by mathematical induction. For $n=1$, we have
$$
\|Q_c(0)\|=\|c\|=\|c\|(\|c\|-1)^{1-1},
$$
so the statement is true for $n=1$.
Suppose the statement is true for $n=k$, then
$$
\|Q_c^{(k)}(0)\| \geq \|c\|(\|c\|-1)^{k-1}.
$$
We now show that the statement is true for $n=k+1$. We have
\begin{align*}
\|Q^{(k+1)}_c(0)\|&=\|Q_c(Q^{(k)}c(0))\|\\
&=\|sp_2(Q^{(k)}_c(0)) + c\|.
\end{align*}
Using the triangle inequality, the definition of the norm and Theorem \ref{mnormspher}, we have
\begin{align*}
\|sp_2(Q^{(k)}_c(0)) + c\| &\geq  \| sp_2(Q^{(k)}_c(0))  \| - \| c\|\\
&= \| Q^{(k)}_c(0) \|^2 - \|c\|.
\end{align*}
Using the induction hypothesis, we have
$$
\| Q^{(k)}_c(0) \|^2 - \|c\| \geq (\|c\|(\|c\|-1)^{k-1})^2 - \|c\|.
$$
Moreover, since $\|c\| > 2$, we have $(\|c\|-1)^{k-1} \geq 1$. Thus, we have
\begin{align*}
(\|c\|(\|c\|-1)^{k-1})^2 - \|c\| &\geq  \|c\|^2(\|c\|-1)^{k-1} - \|c\|\\
&\geq  \|c\|^2(\|c\|-1)^{k-1} - \|c\|(\|c\|-1)^{k-1}\\
&= \|c\|(\|c\|-1)^{k-1}(\|c\|-1)\\
&= \|c\|(\|c\|-1)^{k}.
\end{align*}
We obtain $\|Q_c^{(k+1)}(0)\| \geq \|c\|(\|c\|-1)^{k}$. Thus, the statement is true for $n=k+1$. We showed by induction that the statement is true for all $n\geq 1$.
\end{proof}
Now, we can show another property of the spherical Mandelbrot set which is analogous to a property of the classical Mandelbrot set.
\begin{theorem}
\label{norm2}
For all pure quaternions $c$ in the spherical Mandelbrot set, we have $\|c\| \leq 2$.
\end{theorem}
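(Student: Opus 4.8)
The plan is to prove the contrapositive: if $\|c\| > 2$, then $c \notin \mathcal{M}_{\textnormal{Im}\mathbb{H}}$, i.e. the orbit $\{Q_c^{(n)}(0)\}_{n\in\mathbb{N}}$ is unbounded. This is exactly the setup handled by Lemma \ref{prealableborne}, so the theorem will follow almost immediately from it.

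First I would assume $\|c\| > 2$ and invoke Lemma \ref{prealableborne} to get $\|Q_c^{(n)}(0)\| \geq \|c\|(\|c\|-1)^{n-1}$ for all $n \geq 1$. Since $\|c\| > 2$, we have $\|c\| - 1 > 1$, so $(\|c\|-1)^{n-1} \to \infty$ as $n \to \infty$, and since $\|c\| > 2 > 0$ is a fixed positive constant, the right-hand side tends to $+\infty$. Hence $\|Q_c^{(n)}(0)\| \to \infty$, so the orbit is unbounded and $c \notin \mathcal{M}_{\textnormal{Im}\mathbb{H}}$. Taking the contrapositive: if $c \in \mathcal{M}_{\textnormal{Im}\mathbb{H}}$, then $\|c\| \leq 2$, which is the claim.

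One small subtlety I would make sure to address is the case $\|c\| = 0$ (i.e. $c = 0$): here the orbit is identically $0$, hence bounded, and indeed $\|c\| = 0 \leq 2$, so this boundary case is consistent and needs no separate argument — it is simply one of the points in the set. I would also remark that the statement $\|c\| \leq 2$ means the spherical Mandelbrot set is contained in the closed ball of radius $2$ centered at the origin in $\textnormal{Im}\mathbb{H}$, paralleling the classical fact that $\mathcal{M} \subseteq \overline{B(0,2)}$ in $\mathbb{C}$.

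There is essentially no obstacle here: all the analytic work (the triangle inequality estimate, the norm multiplicativity of $sp_2$ from Theorem \ref{mnormspher}, and the induction) has already been carried out in Lemma \ref{prealableborne}. The only thing to verify is the elementary limit $\|c\|(\|c\|-1)^{n-1} \to \infty$, which is immediate once $\|c\| - 1 > 1$. So the proof is a direct two-line consequence of the lemma, and I would write it accordingly without introducing any new machinery.
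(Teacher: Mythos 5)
Your argument is correct and follows exactly the same route as the paper's proof: assume $\|c\| > 2$, apply Lemma \ref{prealableborne} to get $\|Q_c^{(n)}(0)\| \geq \|c\|(\|c\|-1)^{n-1} \to \infty$, conclude the orbit is unbounded, and take the contrapositive. The extra remark about $c = 0$ is harmless but unnecessary, since that case is not touched by the contrapositive argument.
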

\begin{proof}
Suppose $\|c\| > 2$. Then, from Lemma  \ref{prealableborne}, we have $\|Q^{(n)}_c(0)\| \geq \|c\|(\|c\|-1)^{n-1}$ with $n\geq 1$. Thus, we have $\|Q^{(n)}_c(0)\| \rightarrow \infty$, since $\|c\| > 2$. Indeed, $(\|c\|-1)^{n-1} \rightarrow \infty$, when $\|c\| > 2$. So, since $\|Q^{(n)}_c(0)\|$ is not bounded, then $c\notin \mathcal{M}_{\text{Im}\mathbb{H}}$. Thus, by contrapositive, we obtain that if $c\in \mathcal{M}_{\text{Im}\mathbb{H}}$, then $\|c\| \leq 2$.
\end{proof}

This theorem allows to conclude that the spherical Mandelbrot set is totally included in a ball of radius 2 centered at 0. 
\begin{lemma}
\label{lemma}
Let $\delta >0$. If $\|c\| \leq 2$ and $\|Q_c^{m}(0)\|= 2+\delta > 2$ for $m \geq 1$, then $\| Q_c^{(m+n)}(0) \| \geq 2+4^n\delta$ where $n\geq 1$.
\end{lemma}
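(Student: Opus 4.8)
The plan is to mirror the classical escape estimate for the Mandelbrot set, transported to the spherical setting by means of the norm identity of Theorem \ref{mnormspher}. I would set $w_n := Q_c^{(m+n)}(0)$ for $n \geq 0$, so that $w_0 = Q_c^{(m)}(0)$ satisfies $\|w_0\| = 2+\delta$ and $w_{n+1} = sp_2(w_n) + c$. The single recursive estimate I will rely on is
$$
\|w_{n+1}\| = \|sp_2(w_n) + c\| \geq \|sp_2(w_n)\| - \|c\| = \|w_n\|^2 - \|c\| \geq \|w_n\|^2 - 2,
$$
where the first inequality is the reverse triangle inequality, the following equality is Theorem \ref{mnormspher} applied with exponent $2$, and the last inequality uses the hypothesis $\|c\| \leq 2$.

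From here I would establish $\|w_n\| \geq 2 + 4^n\delta$ by induction on $n \geq 1$. For the base case $n=1$, the recursive estimate gives $\|w_1\| \geq (2+\delta)^2 - 2 = 2 + 4\delta + \delta^2 \geq 2 + 4\delta$, discarding the nonnegative term $\delta^2$. For the inductive step, assuming $\|w_n\| \geq 2 + 4^n\delta$ and noting that $x \mapsto x^2 - 2$ is increasing on $[0,\infty)$ while $2 + 4^n\delta \geq 0$, the recursive estimate yields
$$
\|w_{n+1}\| \geq \|w_n\|^2 - 2 \geq (2 + 4^n\delta)^2 - 2 = 2 + 4^{n+1}\delta + 4^{2n}\delta^2 \geq 2 + 4^{n+1}\delta,
$$
again dropping the nonnegative term $4^{2n}\delta^2$. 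This completes the induction and hence proves the lemma.

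I do not expect a genuine obstacle here: the only step requiring a moment of care is the substitution of the inductive lower bound into $x^2 - 2$, which is legitimate because the quantity being squared is nonnegative (in fact at least $2$), so that monotonicity of $x\mapsto x^2-2$ applies; everything else is just the reverse triangle inequality together with Theorem \ref{mnormspher}. I might also note in passing that the estimate forces $\|w_n\| \to \infty$, which gives an independent confirmation that such a $c$ escapes, consistent with what is already obtained from Lemma \ref{prealableborne} and Theorem \ref{norm2}; but for the statement at hand the clean induction above is all that is needed.
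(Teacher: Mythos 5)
Your proof is correct and follows essentially the same route as the paper: the reverse triangle inequality combined with Theorem \ref{mnormspher} and $\|c\|\leq 2$ gives $\|Q_c^{(m+n+1)}(0)\|\geq \|Q_c^{(m+n)}(0)\|^2-2$, and the induction with expansion of $(2+4^n\delta)^2$ and discarding the square term matches the paper's argument. Your explicit remark on the monotonicity of $x\mapsto x^2-2$ on $[0,\infty)$ is a small clarification the paper leaves implicit, but the substance is identical.
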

\begin{proof}
We prove it by mathematical induction. For $n=1$, we have
$$
\| Q_c^{(m+1)}(0) \| =\| Q_c(Q_c^{(m)}(0)) \| = \|sp_2(Q^{(m)}_c(0)) + c\|.
$$
Using the triangle inequality, the definition of the norm and Theorem \ref{mnormspher}, we have
\begin{align*}
\| sp_2(Q^{(m)}_c(0))  + c\| &\geq  \| sp_2(Q^{(m)}_c(0))   \| - \| c\|\\
&= \| Q^{(m)}_c(0) \|^2 - \|c\| \\
&= (2+\delta)^2 - \|c\|.
\end{align*} 
But, since $\|c\| \leq 2 $, we have $(2+\delta)^2-\|c\| \geq (2+\delta)^2 -2 \geq 2+4\delta$. Thus, the statement is true for $n=1$. Suppose the statement is true for $n=k$, then $\| Q_c^{(m+k)}(0) \| \geq 2+4^k\delta$. Similarly to the case where $n=1$, we have
\begin{align*}
\| Q^{(m+k+1)}_c(0) \| &=  \| Q_c(Q^{(m+k)}_c(0)) \|\\
&=\|sp_2(Q^{(m+k)}_c(0))  + c\| \\
&\geq  \| sp_2(Q^{(m+k)}_c(0))  \| - \| c\|\\
&= \| Q^{(m+k)}_c(0) \|^2 - \|c\|.
\end{align*} 
Using the induction hypothesis and the fact that $\|c\| \leq 2$, we have
$$
\| Q^{(m+k)}_c(0) \|^2 - \|c\| \geq (2+4^k\delta)^2 - 2 = 2 + 4\cdot 4^k\delta + 4^{2k}\delta^2 \geq 2 + 4^{k+1}\delta.
$$
Thus, the statement is true for $n=k+1$. We showed by induction that the statement is true for all $n\geq 1$.
\end{proof}

\begin{figure}[htp]
\centering
	\begin{subfigure}{0.3\textwidth}
	\centering
	\includegraphics[width=3.5cm]
	{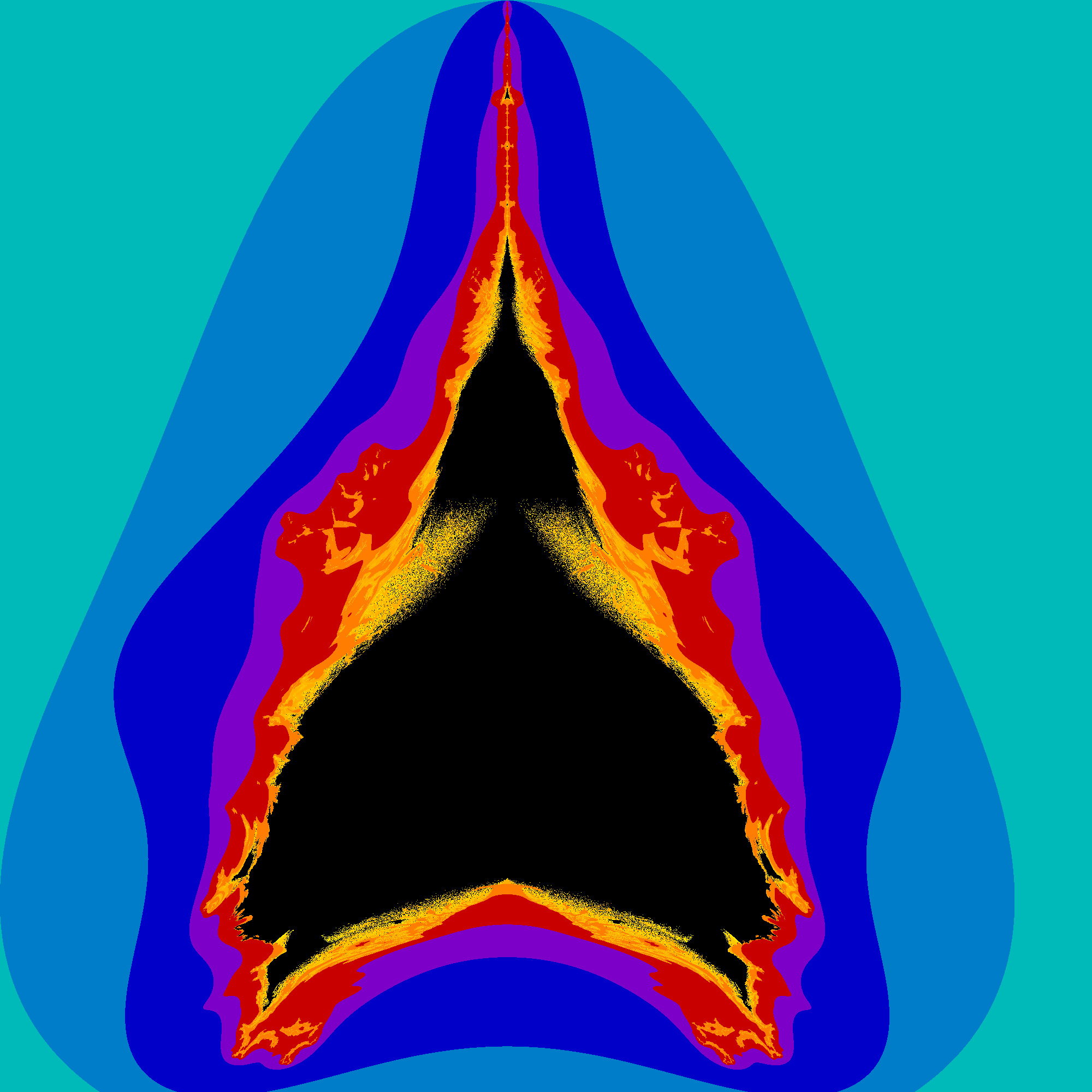}
	\caption{$\mathcal{M}_{\text{Im}\mathbb{H}}$ cut for $x=0$}
	\label{fig:x0}
	\end{subfigure}
\hfill
	\begin{subfigure}{0.3\textwidth}
	\centering
	\includegraphics[width=3.5cm] 
	{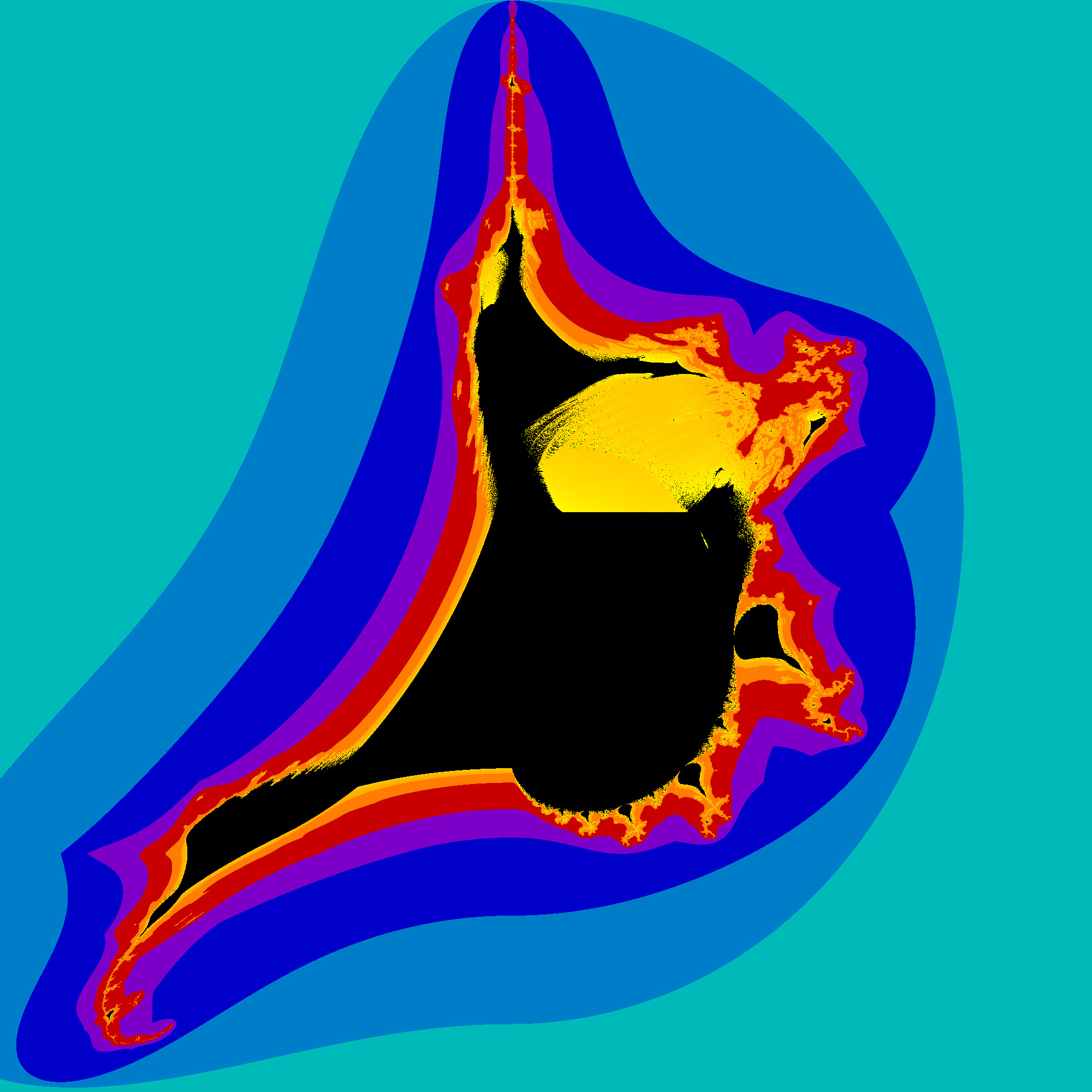}
	\caption{$\mathcal{M}_{\text{Im}\mathbb{H}}$ cut for $y=0$}
	\label{fig:y0}
	\end{subfigure}
\hfill
	\begin{subfigure}{0.3\textwidth}
	\centering
	\includegraphics[width=3.5cm]
	{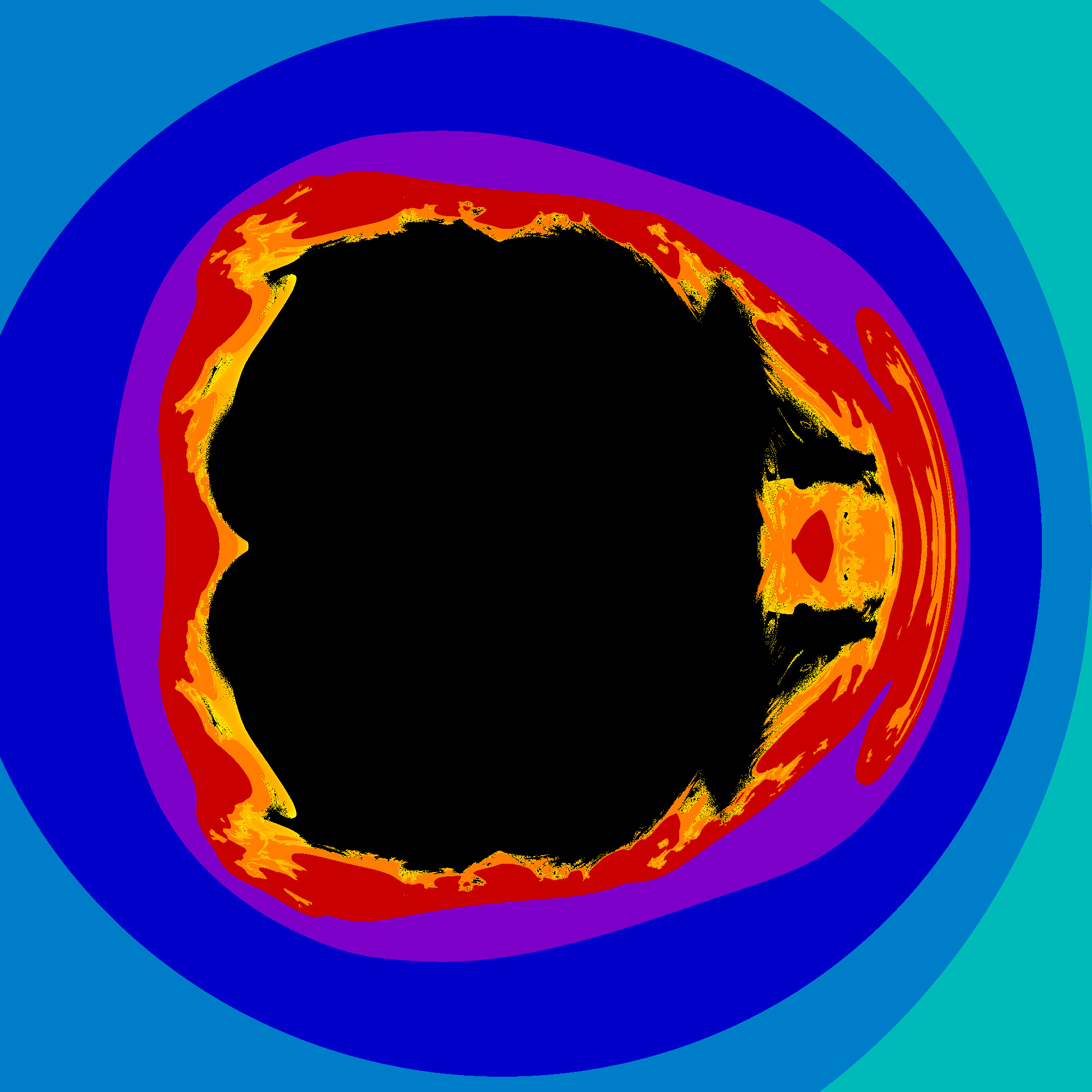}
	\caption{$\mathcal{M}_{\text{Im}\mathbb{H}}$ cut for $z=0$}
	\label{fig:z0}
	\end{subfigure}
\\ \vspace*{5pt}
	\begin{subfigure}{0.3\textwidth}
	\centering
	\includegraphics[width=3.5cm]
	{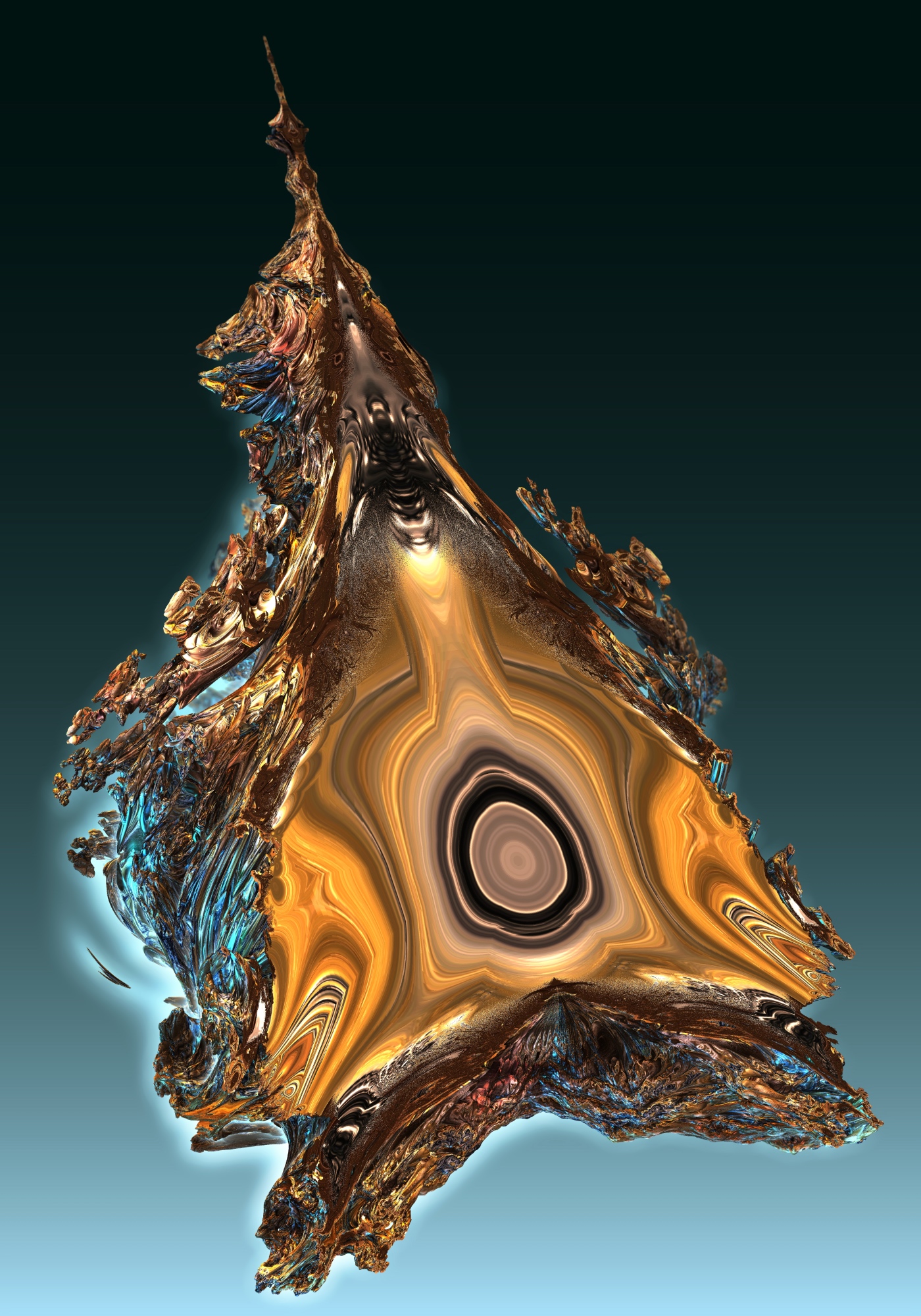}
	\caption{IQ Bulb cut for $x=0$}
	\label{fig:IQBulbx0}
	\end{subfigure}
\hfill
	\begin{subfigure}{0.3\textwidth}
	\centering
	\includegraphics[width=3.5cm]
	{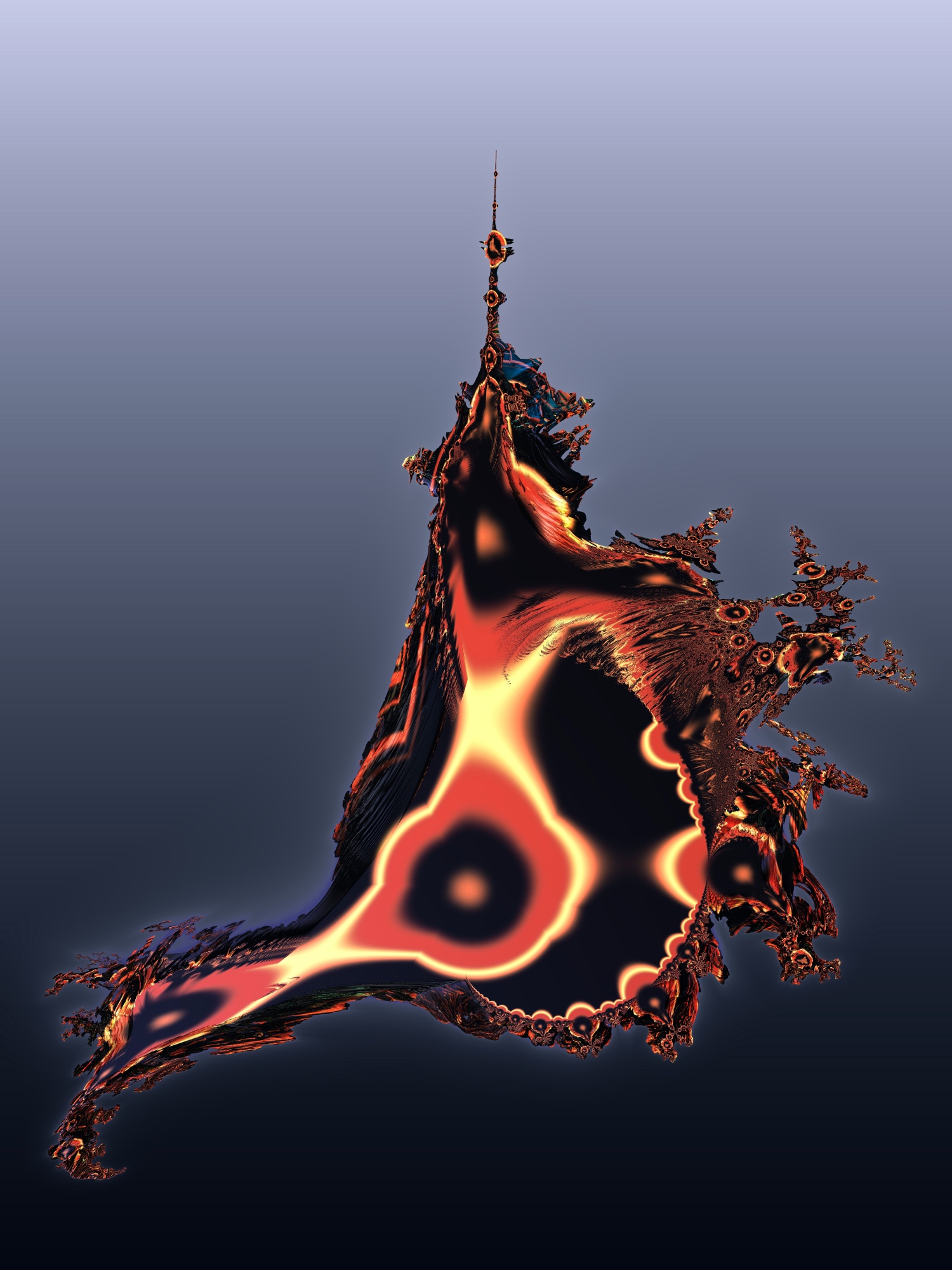}
	\caption{IQ Bulb cut for $y=0$}
	\label{fig:IQBulby0}
	\end{subfigure}
\hfill
	\begin{subfigure}{0.3\textwidth}
	\centering
	\includegraphics[width=3.5cm]
	{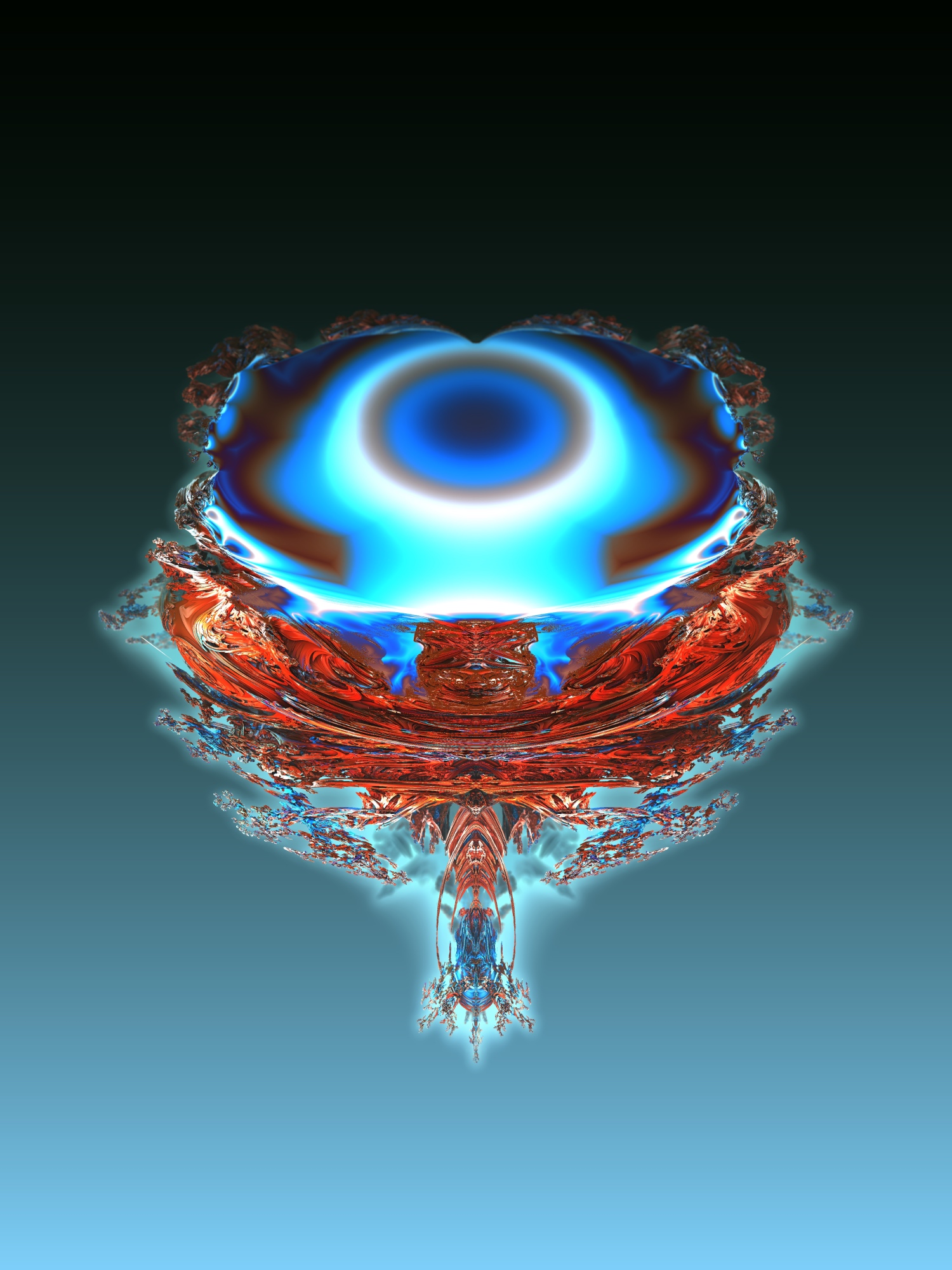}
	\caption{IQ Bulb cut for $z=0$}
	\label{fig:IQBulbz0}
	\end{subfigure}
\caption{Comparison between 2D cuts of the IQ Bulb and the $\mathcal{M}_{\text{Im}\mathbb{H}}$.}
\label{fig:coupes}
\end{figure}

\begin{theorem}
\label{theo:borne}
A pure quaternion $c$ is in the spherical Mandelbrot set if and only if $\|Q_c^{(n)}(0)\|\leq 2$ for all $n$.
\end{theorem}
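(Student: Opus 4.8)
The plan is to prove the two implications separately, obtaining the forward one by contraposition and leaning entirely on Lemma~\ref{lemma} and Theorem~\ref{norm2}, which already contain all the analytic content.

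The easy direction is immediate: if $\|Q_c^{(n)}(0)\| \leq 2$ for every $n$, then the orbit of $0$ under $Q_c$ is contained in the closed ball of radius $2$ centered at the origin, hence bounded, so $c \in \mathcal{M}_{\textnormal{Im}\mathbb{H}}$ by the definition of the spherical Mandelbrot set. Nothing else is needed here.

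For the converse I would argue by contrapositive. Suppose some iterate escapes, i.e.\ there is an index $m$ with $\|Q_c^{(m)}(0)\| > 2$. Since $Q_c^{(0)}(0) = 0$ has norm $0 \leq 2$, necessarily $m \geq 1$, and I write $\|Q_c^{(m)}(0)\| = 2 + \delta$ with $\delta > 0$. Then split into two cases. If $\|c\| > 2$, Theorem~\ref{norm2} (or rather its contrapositive) gives directly $c \notin \mathcal{M}_{\textnormal{Im}\mathbb{H}}$. If instead $\|c\| \leq 2$, the hypotheses of Lemma~\ref{lemma} are satisfied, so $\|Q_c^{(m+n)}(0)\| \geq 2 + 4^n\delta$ for all $n \geq 1$; since $4^n\delta \to \infty$ as $n \to \infty$, the orbit is unbounded and again $c \notin \mathcal{M}_{\textnormal{Im}\mathbb{H}}$. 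In either case the point is not in the set, which is precisely the contrapositive of ``$c \in \mathcal{M}_{\textnormal{Im}\mathbb{H}} \Rightarrow \|Q_c^{(n)}(0)\| \leq 2$ for all $n$''.

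I do not anticipate any genuine obstacle: the doubling estimate of Lemma~\ref{lemma} and the radius-$2$ bound of Theorem~\ref{norm2} do the work. The only points needing a little care are bookkeeping: one must treat the case $\|c\| > 2$ via Theorem~\ref{norm2} because Lemma~\ref{lemma} explicitly assumes $\|c\| \leq 2$ and does not apply there, and one should note that the $n=0$ iterate equals $0$, so the condition ``for all $n$'' is harmless at the base point and the first possible escape occurs at some $m \geq 1$.
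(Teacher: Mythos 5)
Your proposal is correct and follows essentially the same route as the paper: the easy direction from the definition, and the hard direction via Theorem~\ref{norm2} together with the growth estimate of Lemma~\ref{lemma}. The only difference is presentational — you argue by contrapositive with a case split on $\|c\|$, while the paper first invokes Theorem~\ref{norm2} to get $\|c\|\leq 2$ and then derives a contradiction — but the ingredients and estimates are identical.
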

\begin{proof}
The fact that if $\|Q_c^{(n)}(0)\|\leq 2$ for all $n$, then $c \in \mathcal{M}_{\textnormal{Im}\mathbb{H}}$ is a direct consequence of the sherical Mandelbrot set definition. We now show that if $c \in \mathcal{M}_{\textnormal{Im}\mathbb{H}}$, then $\|Q_c^{(n)}(0)\|\leq 2$ for all $n$. Let $c \in \mathcal{M}_{\textnormal{Im}\mathbb{H}}$. From Theorem \ref{norm2}, we have $\|c\| \leq 2$. Suppose there exists an $n\in\mathbb{N}$ such that $\|Q_c^{(n)}(0)\|= 2+\delta$ for some $\delta >0$. Then, from Lemma \ref{lemma}, $\| Q_c^{(m+n)}(0) \| \geq 2+4^n\delta$. Thus, $\| Q_c^{(m+n)}(0) \| \rightarrow \infty$ which means that $\| Q_c^{(n)}(0) \| \rightarrow \infty$ and that $ c \notin \mathcal{M}_{\textnormal{Im}\mathbb{H}}$. We have a contradiction. Therefore, we have $\|Q_c^{(n)}(0)\| \leq 2$ for all $n$. 
\end{proof}
Theorem \ref{theo:borne} is useful to visualize the set. Indeed, to generate images using the computer we need to verify if the iterates are bounded by $2$. Figure \ref{fig:coupes} shows cuts of the spherical Mandelbrot set. We generated these images using Python. In black, we can see the points that are in the set. The points around the set have been colored according to the iteration from which $\| Q_c^{(n)}(0)\| > 2$. This method is the same that is used to generate the Mandelbrot set and its colors.

\subsection{The Mandelbulb}
In 2007, Daniel White and Paul Nylander used iterations of spherical coordinates to generate images using a ray tracing method inspired by other methods used in \cite{brouillettepariserochon, dang, rochonmartineau}. They showed images for the power 8. This fractal is known as the Mandelbulb \cite{Barrallo, nylander, white}. Then, in 2009, Inigo Quilez improved the algorithm making it possible to have a good representation of the power 2, the IQ Bulb \cite{CNRS, quilez}. Figure \ref{fig:IQBulb3D} shows the IQ Bulb (also called the \textit{Mandeldart}). More specifically, Figure \ref{fig:coupes} shows slices of the spherical Mandelbrot set in comparison with slices from the IQ Bulb. We see that the slices of the IQ Bulb and the spherical Mandelbrot set are visually the same. Images of the IQ Bulb have been generated with Mandelbulb 3D and the Figure \ref{fig:IQBulbZoom3D} provides some special zooms in this Mandelbulb power 2.\\

\begin{figure}[htp]
\centering
\includegraphics[scale=0.07]{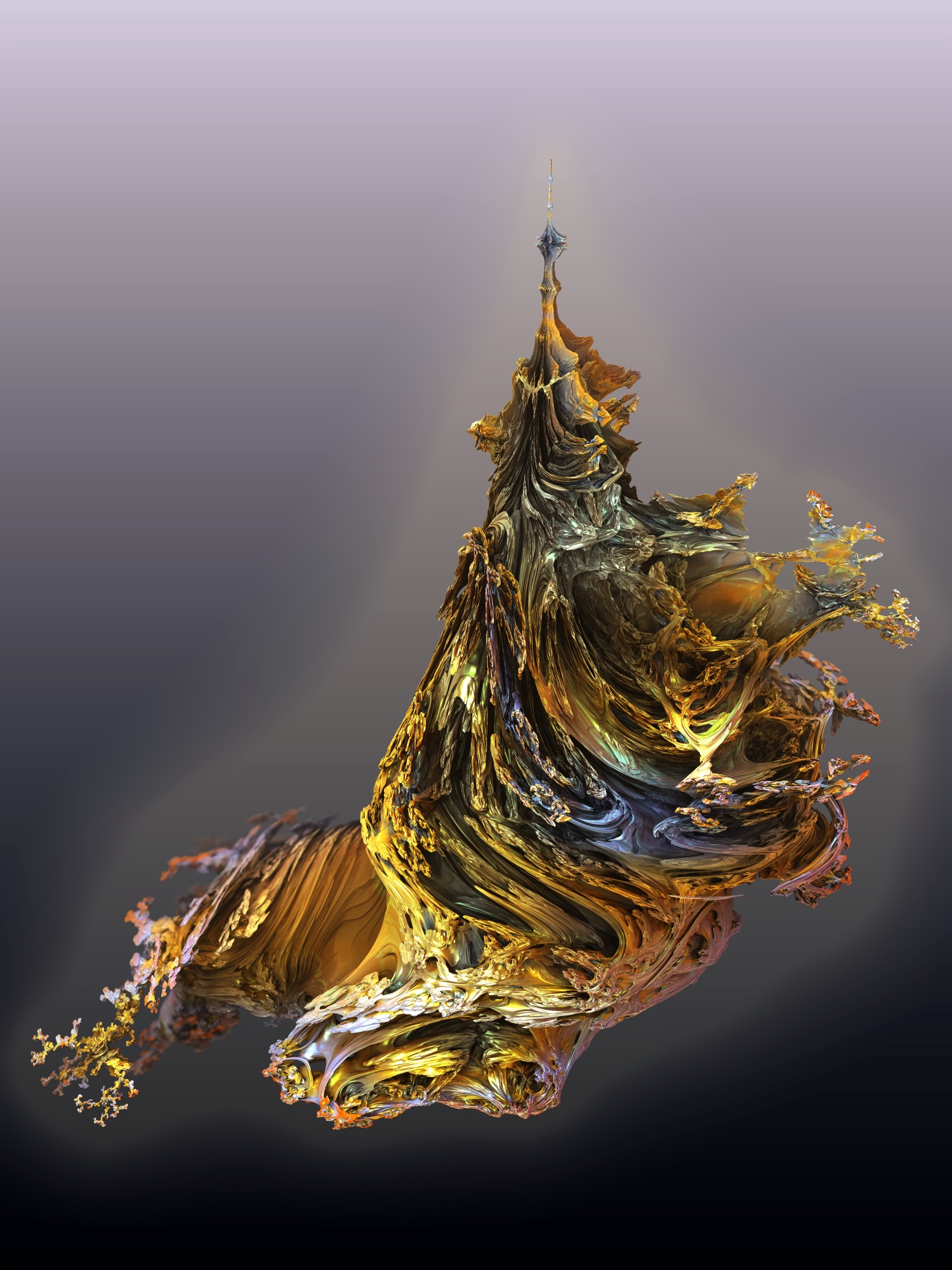}\hspace{10mm}
\includegraphics[scale=0.07]{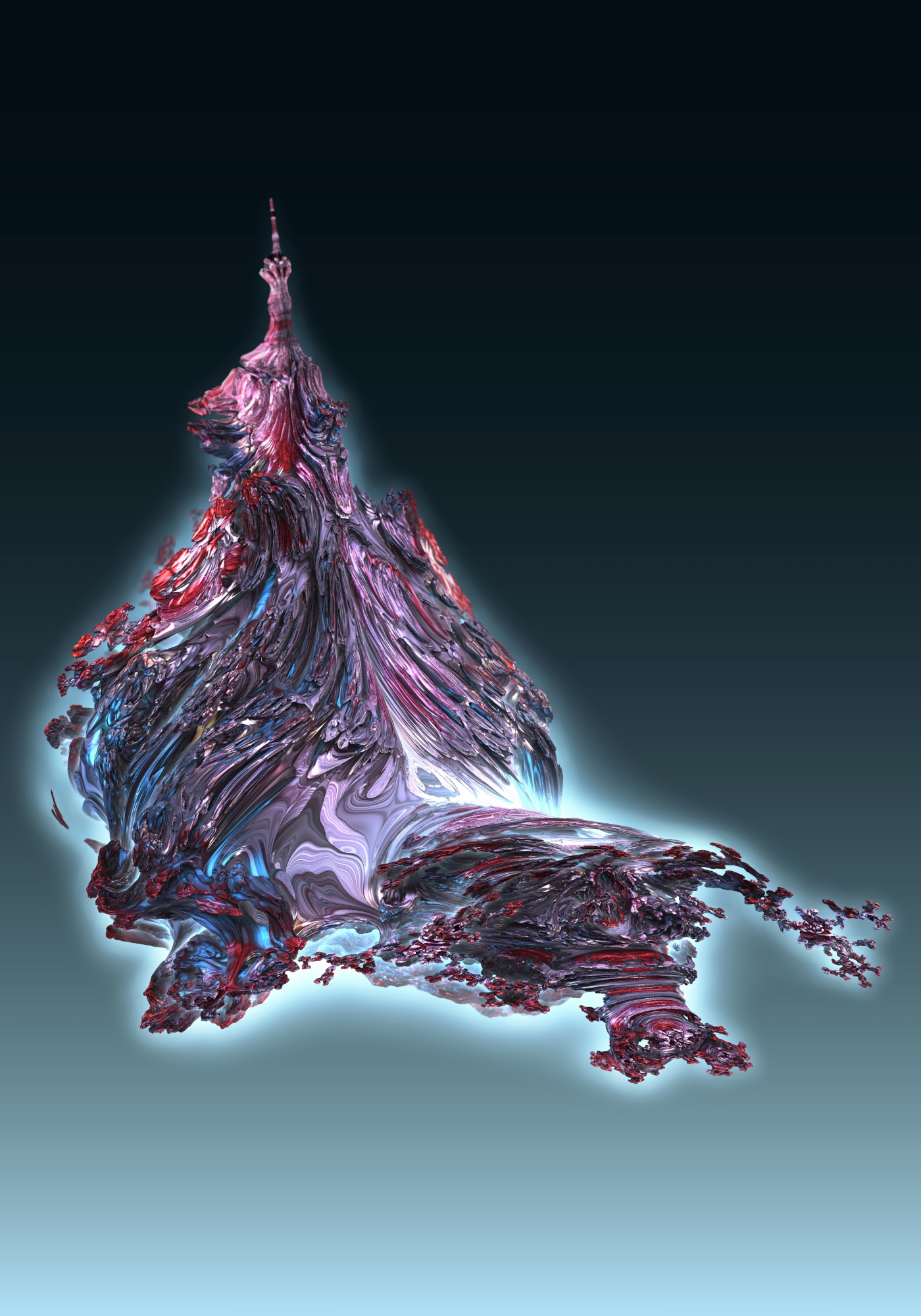}\hfill
\caption{\label{fig:IQBulb3D}Representation of the IQ Bulb generated with Mandelbulb 3D.}
\end{figure}

\begin{figure}[htp]
\centering
\includegraphics[scale=0.07]{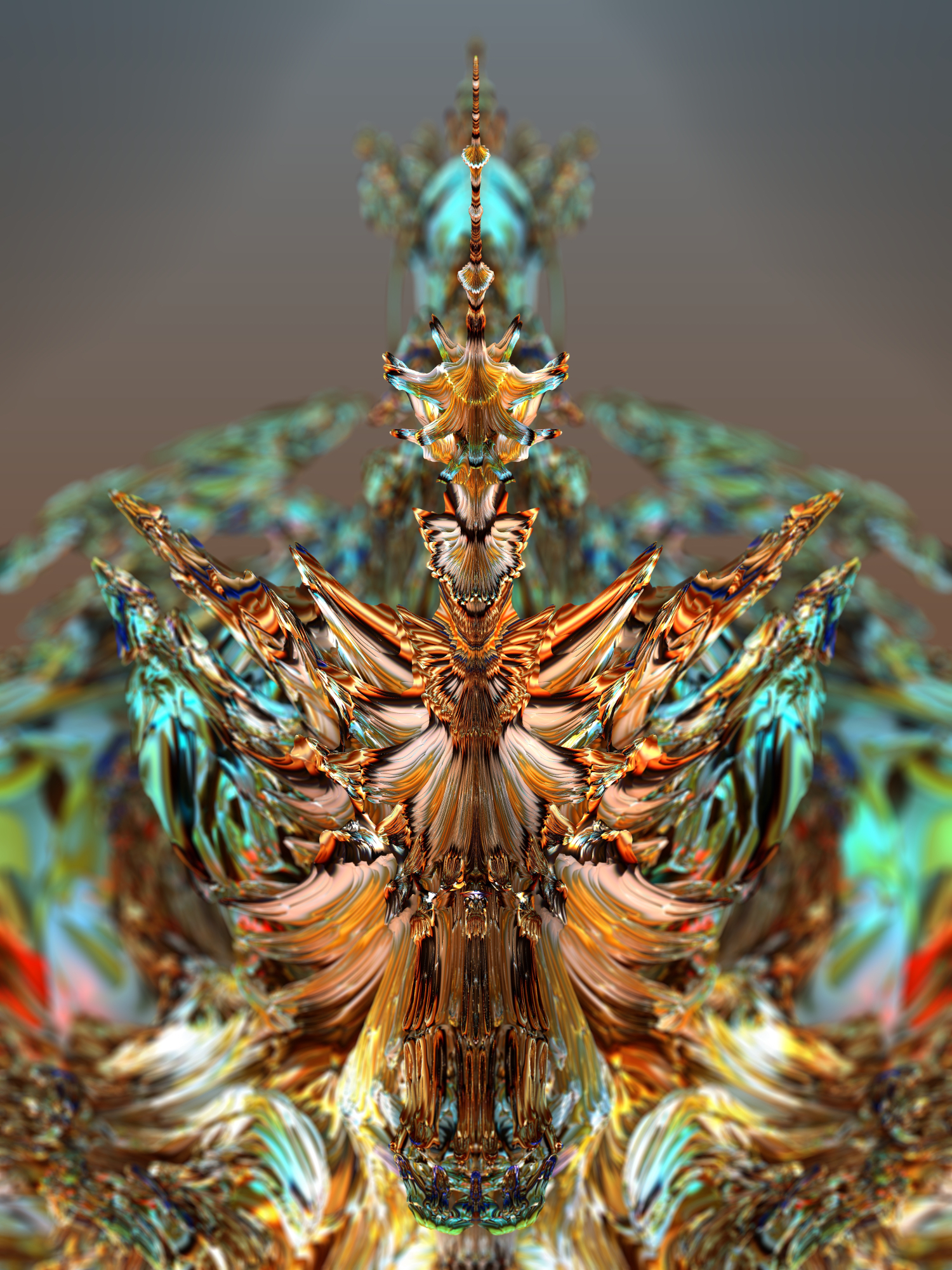}\hspace{10mm}
\includegraphics[scale=0.07]{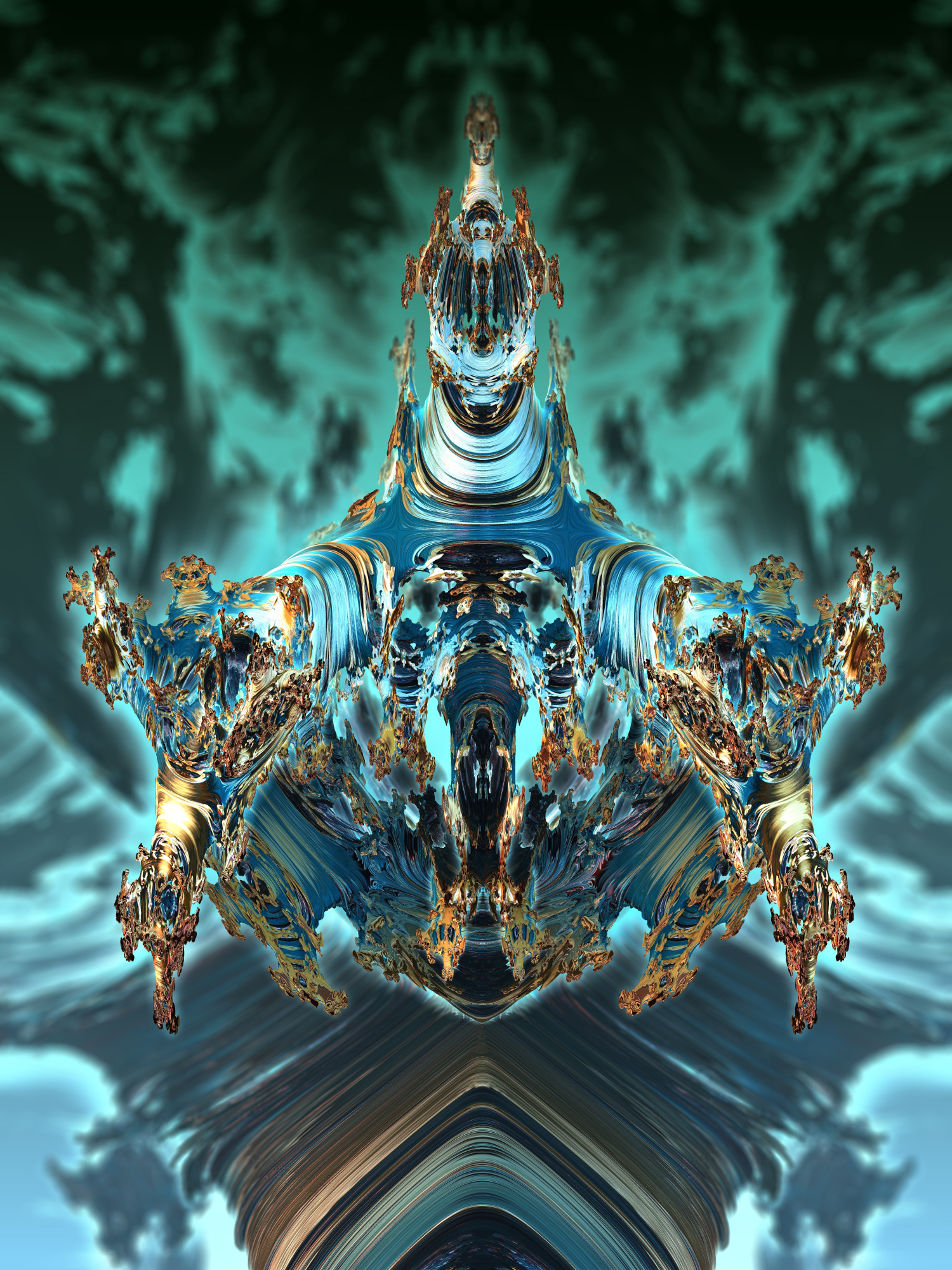}\hfill
\caption{\label{fig:IQBulbZoom3D}Deep zooms on the Mandeldart.}
\end{figure}

Now, we present a new set when the spherical polynomial is of degree $m$. 
\begin{definition}
Let $m\in\mathbb{N}$. The Mandelbrot set $\spher^{m}$ is defined as
$$
\spher^{m}=\lbrace c\in\ \textnormal{Im}\mathbb{H}\ |\  \lbrace Q^{(n)}_{m,c}(0)\rbrace_{n\in \mathbb{N}}\ \text{is bounded}\rbrace
$$
where $Q^{(n)}_{m,c}(q) = sp_m(q) +c$ is a spherical polynomial.
\end{definition}
The following theorems are analogous to theorems showed by Parisé and Rochon in \cite{parise, RochonParise} in the case of Multibrots.
\begin{lemma}
\label{mprealableborne}
Consider the spherical polynomial $Q_{m,c}(q) = sp_m(q) + c$ where $q, c \in$ \textnormal{Im}$\mathbb{H}$ with $m\geq 2$ and $\|c\|^{m-1}> 2$, then $\|Q^{(n)}_{m,c}(0)\| \geq \|c\|(\|c\|^{m-1}-1)^{n-1}$ with $n\geq 1$.
\end{lemma}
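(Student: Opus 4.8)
The plan is to mimic the proof of Lemma~\ref{prealableborne} essentially verbatim, replacing the square by an $m$-th power and invoking Theorem~\ref{mnormspher} with exponent $m$ in place of its $n=2$ instance. We argue by induction on $n$, exactly as there.

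For $n=1$ we have $Q_{m,c}(0)=sp_m(0)+c=c$, so $\|Q_{m,c}(0)\|=\|c\|=\|c\|(\|c\|^{m-1}-1)^{0}$ and the inequality holds with equality. Assume now it holds for $n=k$, i.e.\ $\|Q^{(k)}_{m,c}(0)\|\geq\|c\|(\|c\|^{m-1}-1)^{k-1}$. Since $Q^{(k+1)}_{m,c}(0)=sp_m\big(Q^{(k)}_{m,c}(0)\big)+c$, the triangle inequality together with Theorem~\ref{mnormspher} gives
$$
\|Q^{(k+1)}_{m,c}(0)\|\ \geq\ \big\|sp_m\big(Q^{(k)}_{m,c}(0)\big)\big\|-\|c\|\ =\ \|Q^{(k)}_{m,c}(0)\|^{m}-\|c\|\ \geq\ \|c\|^{m}(\|c\|^{m-1}-1)^{m(k-1)}-\|c\|,
$$
the last step using the induction hypothesis.

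The one point requiring care is the exponent bookkeeping: the natural estimate above produces the power $m(k-1)$, not $k-1$. This is where the hypothesis $\|c\|^{m-1}>2$ (rather than merely $\|c\|>2$) is used: it forces $\|c\|^{m-1}-1>1$, so that $(\|c\|^{m-1}-1)^{m(k-1)}\geq(\|c\|^{m-1}-1)^{k-1}$ since $m(k-1)\geq k-1$ for $m\geq 2$, and also $(\|c\|^{m-1}-1)^{k-1}\geq 1$. Combining these,
$$
\|c\|^{m}(\|c\|^{m-1}-1)^{m(k-1)}-\|c\|\ \geq\ \|c\|^{m}(\|c\|^{m-1}-1)^{k-1}-\|c\|(\|c\|^{m-1}-1)^{k-1}\ =\ \|c\|(\|c\|^{m-1}-1)^{k},
$$
which closes the induction.

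The only real obstacle, then, is tracking the powers of $\|c\|^{m-1}-1$ and noticing that it is the rescaled hypothesis $\|c\|^{m-1}>2$ that both legitimizes the reduction of the exponent and (in the subsequent theorem) guarantees $(\|c\|^{m-1}-1)^{n-1}\to\infty$, hence the analogue of Theorem~\ref{norm2} that $\|c\|^{m-1}\leq 2$ on $\spher^{m}$. Setting $m=2$ recovers Lemma~\ref{prealableborne}, so no genuinely new difficulty arises.
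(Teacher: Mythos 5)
Your proof is correct and follows essentially the same route as the paper's: induction on $n$, the triangle inequality combined with Theorem~\ref{mnormspher} to get $\|Q^{(k+1)}_{m,c}(0)\|\geq\|Q^{(k)}_{m,c}(0)\|^{m}-\|c\|$, and then the same exponent reduction $(\|c\|^{m-1}-1)^{m(k-1)}\geq(\|c\|^{m-1}-1)^{k-1}\geq 1$ licensed by $\|c\|^{m-1}>2$ before factoring out $\|c\|(\|c\|^{m-1}-1)^{k-1}$. The bookkeeping point you flag is exactly the step the paper performs implicitly, so nothing is missing.
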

\begin{proof}
We prove it by mathematical induction. For $n=1$, we have
$$
\|Q_{m,c}(0)\|=\|c\|=\|c\|(\|c\|^{m-1}-1)^{1-1},
$$
so the statement is true for $n=1$.
Suppose the statement is true for $n=k$, then
$$
\|Q_{m,c}^{(k)}(0)\| \geq \|c\|(\|c\|^{m-1}-1)^{k-1}.
$$
We now show that the statement is true for $n=k+1$. We have
\begin{align*}
\|Q^{(k+1)}_{m,c}(0)\|&=\|Q_{m,c}(Q^{(k)}_{m,c}(0))\|\\
&=\|sp_m(Q^{(k)}_{m,c}(0))  + c\|.
\end{align*}
Using the triangle inequality, the definition of the norm and Theorem \ref{mnormspher}, we have
\begin{align*}
\|sp_m(Q^{(k)}_{m,c}(0)) + c\| &\geq  \| sp_m(Q^{(k)}_{m,c}(0))  \| - \| c\|\\
&= \| Q^{(k)}_{m,c}(0) \|^m - \|c\|.
\end{align*}
Using the induction hypothesis, we have
$$
\| Q^{(k)}_{m,c}(0) \|^m - \|c\| \geq (\|c\|(\|c\|^{m-1}-1)^{k-1})^m - \|c\|.
$$
Moreover, since $\|c\|^{m-1} > 2$, we have $(\|c\|^{m-1}-1)^{k-1} \geq 1$. Thus, we have
\begin{align*}
(\|c\|(\|c\|^{m-1}-1)^{k-1})^m - \|c\| &\geq  \|c\|^m(\|c\|^{m-1}-1)^{k-1} - \|c\|\\
&\geq  \|c\|^m(\|c\|^{m-1}-1)^{k-1} - \|c\|(\|c\|^{m-1}-1)^{k-1}\\
&= \|c\|(\|c\|^{m-1}-1)^{k-1}(\|c\|^{m-1}-1)\\
&= \|c\|(\|c\|^{m-1}-1)^{k}.
\end{align*}
We obtain $\|Q_{m,c}^{(k+1)}(0)\| \geq \|c\|(\|c\|^{m-1}-1)^{k}$. Thus, the statement is true for $n=k+1$. We showed by induction that the statement is true for all $n\geq 1$.
\end{proof}
\begin{theorem}
\label{mnorm}
For all pure quaternions $c$ in Mandelbrot set $\spher^{m}$, we have $\|c\| \leq 2^{1/(m-1)}$.
\end{theorem}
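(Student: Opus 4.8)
The plan is to reduce everything to Lemma \ref{mprealableborne}, mirroring exactly the proof of Theorem \ref{norm2}, which is the special case $m=2$ (there $2^{1/(m-1)}=2$). Throughout one assumes $m\geq 2$, so that $2^{1/(m-1)}$ is well defined and the hypotheses of Lemma \ref{mprealableborne} can be invoked.

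First I would argue by contraposition: rather than proving directly that $c\in\spher^{m}$ implies $\|c\|\leq 2^{1/(m-1)}$, I would show that $\|c\|> 2^{1/(m-1)}$ forces $c\notin\spher^{m}$. The key observation is that $\|c\|> 2^{1/(m-1)}$ is equivalent to $\|c\|^{m-1}> 2$, which is precisely the hypothesis required by Lemma \ref{mprealableborne}.

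Then, applying Lemma \ref{mprealableborne} under this assumption, for every $n\geq 1$ one obtains
$$
\|Q^{(n)}_{m,c}(0)\| \geq \|c\|\,(\|c\|^{m-1}-1)^{n-1}.
$$
Since $\|c\|^{m-1}> 2$ gives $\|c\|^{m-1}-1> 1$, the geometric factor $(\|c\|^{m-1}-1)^{n-1}$ tends to $+\infty$ as $n\to\infty$, hence $\|Q^{(n)}_{m,c}(0)\|\to\infty$. Therefore the orbit of $0$ under $Q_{m,c}$ is unbounded, i.e. $c\notin\spher^{m}$. Taking the contrapositive yields the stated bound $\|c\|\leq 2^{1/(m-1)}$ for every $c\in\spher^{m}$.

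There is essentially no genuine obstacle here beyond bookkeeping; the only points needing a moment's care are that the \emph{strict} inequality $\|c\|^{m-1}> 2$ (not merely $\geq$) is what one gets from $\|c\|> 2^{1/(m-1)}$, so that $\|c\|^{m-1}-1$ is strictly greater than $1$ and the geometric blow-up is genuine, and that $m\geq 2$ must be assumed so that both $2^{1/(m-1)}$ and Lemma \ref{mprealableborne} make sense.
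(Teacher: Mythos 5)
Your proposal is correct and follows essentially the same route as the paper: both argue by contraposition, noting that $\|c\|>2^{1/(m-1)}$ is equivalent to $\|c\|^{m-1}>2$, then invoke Lemma \ref{mprealableborne} to get the geometric lower bound $\|Q^{(n)}_{m,c}(0)\|\geq \|c\|(\|c\|^{m-1}-1)^{n-1}\to\infty$, hence $c\notin\spher^{m}$. Your explicit remarks about the strictness of the inequality and the implicit assumption $m\geq 2$ are fine points the paper leaves tacit, but the argument is the same.
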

\begin{proof}
Suppose $\|c\| > 2^{1/(m-1)}$ which is equivalent to $\|c\|^{m-1} > 2$. Then, from Lemma \ref{mprealableborne}, we have $\|Q^{(n)}_{m,c}(0)\| \geq \|c\|(\|c\|^{m-1}-1)^{n-1}$ with $n\geq 1$. Thus, we have $\|Q^{(n)}_{m,c}(0)\| \rightarrow \infty$, since $\|c\|^{m-1} -1 > 1$. So, since $\|Q^{(n)}_{m,c}(0)\|$ is not bounded, then $c\notin \spher^m$. Thus, by contrapositive, we obtain that if $c\in \spher^m$, then $\|c\| \leq 2^{1/(m-1)}$.
\end{proof}

This theorem allows to conclude that the Mandelbrot set $\spher^m$ is totally included in a ball of radius $2^{1/(m-1)}$ centered at 0. 
\begin{lemma}
\label{mlemma}
If $\|c\| \leq 2^{1/(m-1)}$ and $\|Q_{m,c}^{(n)}(0)\|= 2^{1/(m-1)}+\delta > 2^{1/(m-1)}$ for $n \geq 1$, then $\| Q_{m,c}^{(n+k)}(0) \| \geq 2^{1/(m-1)}+(2m)^k\delta$ where $k\geq 1$.
\end{lemma}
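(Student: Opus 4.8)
The plan is to mimic the proof of Lemma \ref{lemma}, replacing the squaring estimate by a binomial estimate valid for an arbitrary power $m$. Throughout, set $r := 2^{1/(m-1)}$, so that $r^{m-1}=2$, and hence $r^m = 2r$ and $m r^{m-1} = 2m$. I would argue by induction on $k$, using three ingredients at each step: the identity $Q_{m,c}^{(n+k+1)}(0) = sp_m\big(Q_{m,c}^{(n+k)}(0)\big) + c$, the reverse triangle inequality, and Theorem \ref{mnormspher} (which gives $\|sp_m(q)\| = \|q\|^m$).

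For the base case $k=1$, these ingredients yield
\[
\|Q_{m,c}^{(n+1)}(0)\| \geq \|sp_m(Q_{m,c}^{(n)}(0))\| - \|c\| = \|Q_{m,c}^{(n)}(0)\|^m - \|c\| = (r+\delta)^m - \|c\|.
\]
Expanding $(r+\delta)^m$ by the binomial theorem and keeping only the first two terms (all terms are nonnegative since $r,\delta>0$) gives $(r+\delta)^m \geq r^m + m r^{m-1}\delta = 2r + 2m\delta$. Since $\|c\| \leq r$, this produces $\|Q_{m,c}^{(n+1)}(0)\| \geq 2r + 2m\delta - r = r + 2m\delta$, which is exactly the claim for $k=1$.

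For the inductive step, suppose $\|Q_{m,c}^{(n+k)}(0)\| \geq r + (2m)^k\delta$. The same three ingredients give $\|Q_{m,c}^{(n+k+1)}(0)\| \geq \|Q_{m,c}^{(n+k)}(0)\|^m - \|c\|$, and since $t\mapsto t^m$ is increasing on $[0,\infty)$, the induction hypothesis bounds the right-hand side below by $\big(r+(2m)^k\delta\big)^m - \|c\|$. Applying the binomial estimate again, now with $(2m)^k\delta$ in place of $\delta$, and using $\|c\|\leq r$, one obtains
\[
\|Q_{m,c}^{(n+k+1)}(0)\| \geq 2r + m r^{m-1}(2m)^k\delta - r = r + (2m)^{k+1}\delta,
\]
which closes the induction.

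I do not expect a genuine obstacle here; the only substantive change relative to Lemma \ref{lemma} is that the explicit computation $(2+\delta)^2 - 2 \geq 2 + 4\delta$ is replaced by the binomial inequality $(r+\delta)^m \geq r^m + m r^{m-1}\delta$, and one must track the constant $2m = m r^{m-1}$ that governs the geometric growth factor. The only care needed is to record that the discarded binomial terms are nonnegative (which holds because $r,\delta>0$) and that raising to the $m$-th power preserves the inequality in the inductive step.
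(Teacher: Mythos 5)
Your proposal is correct and follows essentially the same route as the paper's proof: reverse triangle inequality plus Theorem \ref{mnormspher}, then a binomial expansion of $(2^{1/(m-1)}+\delta)^m$ keeping only the $i=0,1$ terms, with the key constant $\binom{m}{1}\bigl(2^{1/(m-1)}\bigr)^{m-1}=2m$ driving the geometric growth, applied identically in the base case and the inductive step. Your substitution $r=2^{1/(m-1)}$ (so $r^m=2r$, $mr^{m-1}=2m$) merely makes the arithmetic more transparent than the paper's explicit exponent manipulations.
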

\begin{proof}
We prove it by mathematical induction. For $k=1$, we have
$$
\| Q_{m,c}^{(n+1)}(0) \| =\| Q_{m,c}(Q_{m,c}^{(n)}(0)) \| = \|sp_m(Q^{(n)}_{m,c}(0)) + c\|.
$$
Using the triangle inequality, the definition of the norm and Theorem \ref{mnormspher}, we have
\begin{align*}
\|sp_m(Q^{(n)}_{m,c}(0)) + c\| &\geq  \| sp_m(Q^{(n)}_{m,c}(0))  \| - \| c\|\\
&= \| Q^{(n)}_{m,c}(0) \|^m - \|c\| \\
&= (2^{1/(m-1)}+\delta)^m - \|c\|\\
&= \sum^m_{i=0} \binom{m}{i} \left(2^{1/(m-1)}\right)^{m-i}(\delta)^i - \| c\| \\
&\geq  \sum^m_{i=0} \binom{m}{i} \left(2^{1/(m-1)}\right)^{m-i}(\delta)^i - 2^{1/(m-1)}.
\end{align*} 
But, since $2^{1/(m-1)} > 1 $ and $\delta > 0$, all terms in the sum are positive. So, 
\begin{align*}
\sum^m_{i=0} \binom{m}{i} \left(2^{1/(m-1)}\right)^{m-i}(\delta)^i - 2^{1/(m-1)} &\geq  2^{m/(m-1)} + \binom{m}{1}2\delta - 2^{1/(m-1)}\\
&\geq  2^{1/(m-1)}(2^m-1) + 2m\delta\\
&\geq  2^{1/(m-1)} + 2m\delta.
\end{align*}
Thus, $\| Q_{m,c}^{n+1}(0)\| \geq 2^{1/(m-1)} + 2m\delta$. The statement is true for $k=1$. Suppose the statement is true for $k=j$, then $\| Q_{m,c}^{(n+j)}(0) \| \geq 2^{1/(m-1)}+(2m)^j\delta$. Similarly to the case where $n=1$, we have
\begin{align*}
\| Q^{(n+j+1)}_{m,c}(0) \| &= \| Q_{m,c}(Q^{(n+j)}_{m,c}(0)) \|\\
&=\|sp_m(Q^{(n+j)}_{m,c}(0))  + c\| \\
&\geq  \|sp_m(Q^{(n+j)}_{m,c}(0)) \| - \| c\|\\
&= \| Q^{(n+j)}_{m,c}(0) \|^m - \|c\|.
\end{align*} 
Using the induction hypothesis and the fact that $\|c\| \leq 2^{1/(m-1)}$, we have
\begin{align*}
\| Q^{(n+j)}_{m,c}(0) \|^m - \|c\| &\geq  (2^{1/(m-1)}+(2m)^j\delta)^m - \|c\| \\
&\geq  2^{m/(m-1)} + \binom{m}{1}2(2m)^j\delta - 2^{1/(m-1)} \\
&\geq  2^{1/(m-1)} + (2m)^{j+1}\delta.
\end{align*}
Thus, the statement is true for $k=j+1$. We showed by induction that the statement is true for all $k\geq 1$.
\end{proof}

\begin{figure}[ht]
\centering
	\begin{subfigure}{0.3\textwidth}
	\centering
	\includegraphics[width=3.5cm]	{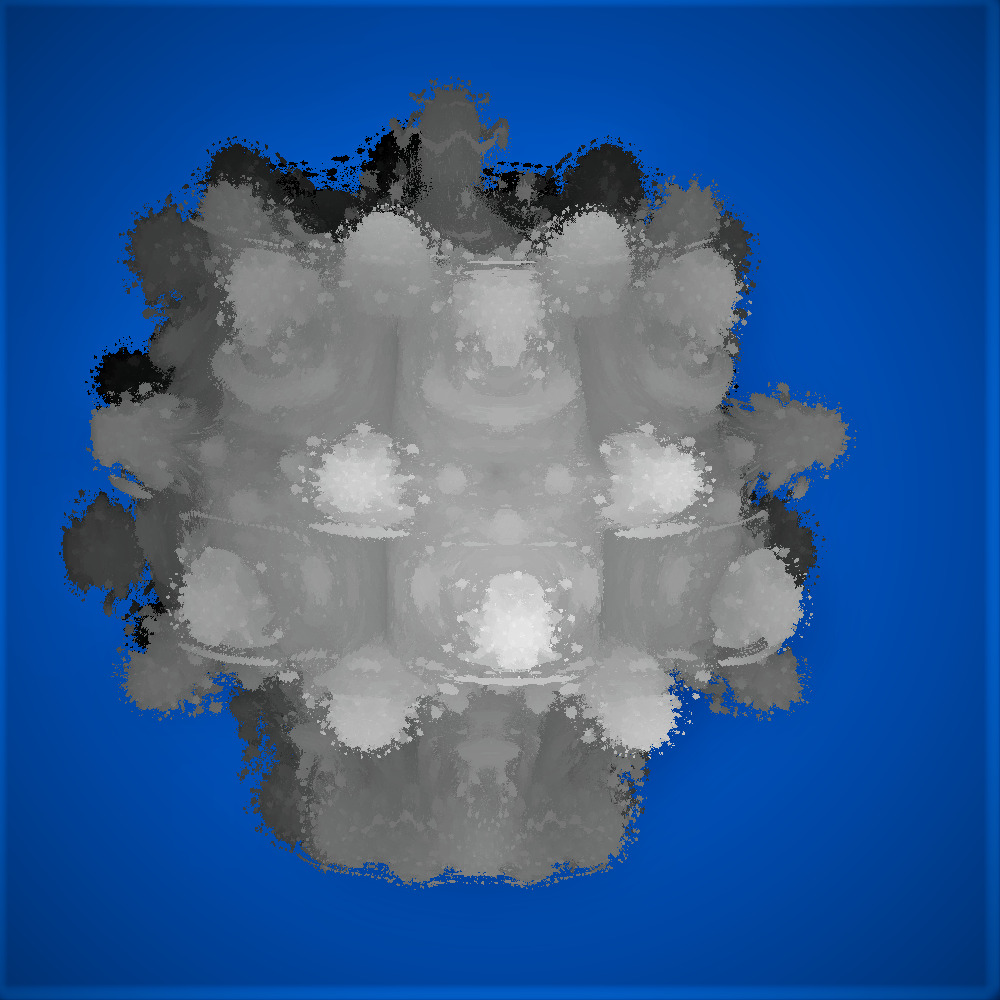}
	\caption{$\spher^{8}$ generated in Python.}
	\end{subfigure}
\hfill
\begin{subfigure}{0.3\textwidth}
	\centering
	\includegraphics[width=3.5cm]
{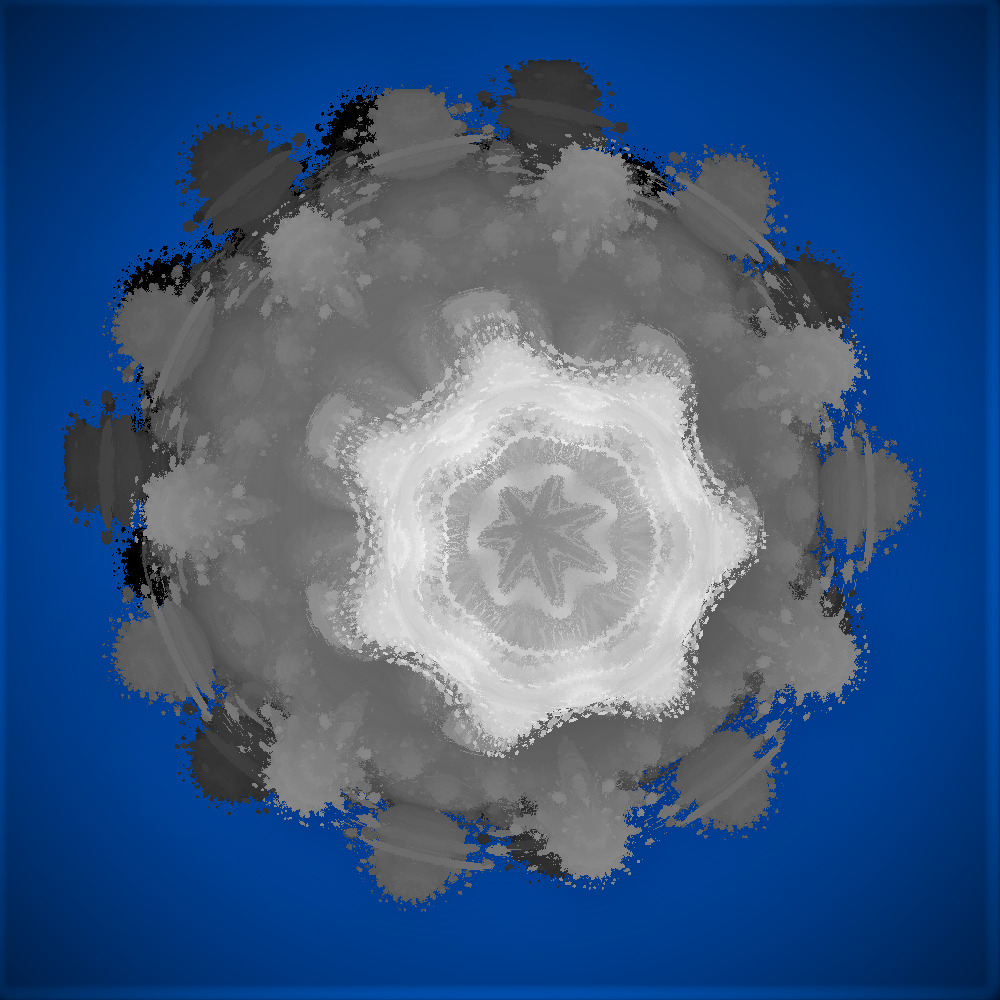}
	\caption{$\spher^{8}$ generated in Python.}
	\end{subfigure}
\hfill
	\begin{subfigure}{0.3\textwidth}
	\centering
	\includegraphics[width=3.5cm] 
	{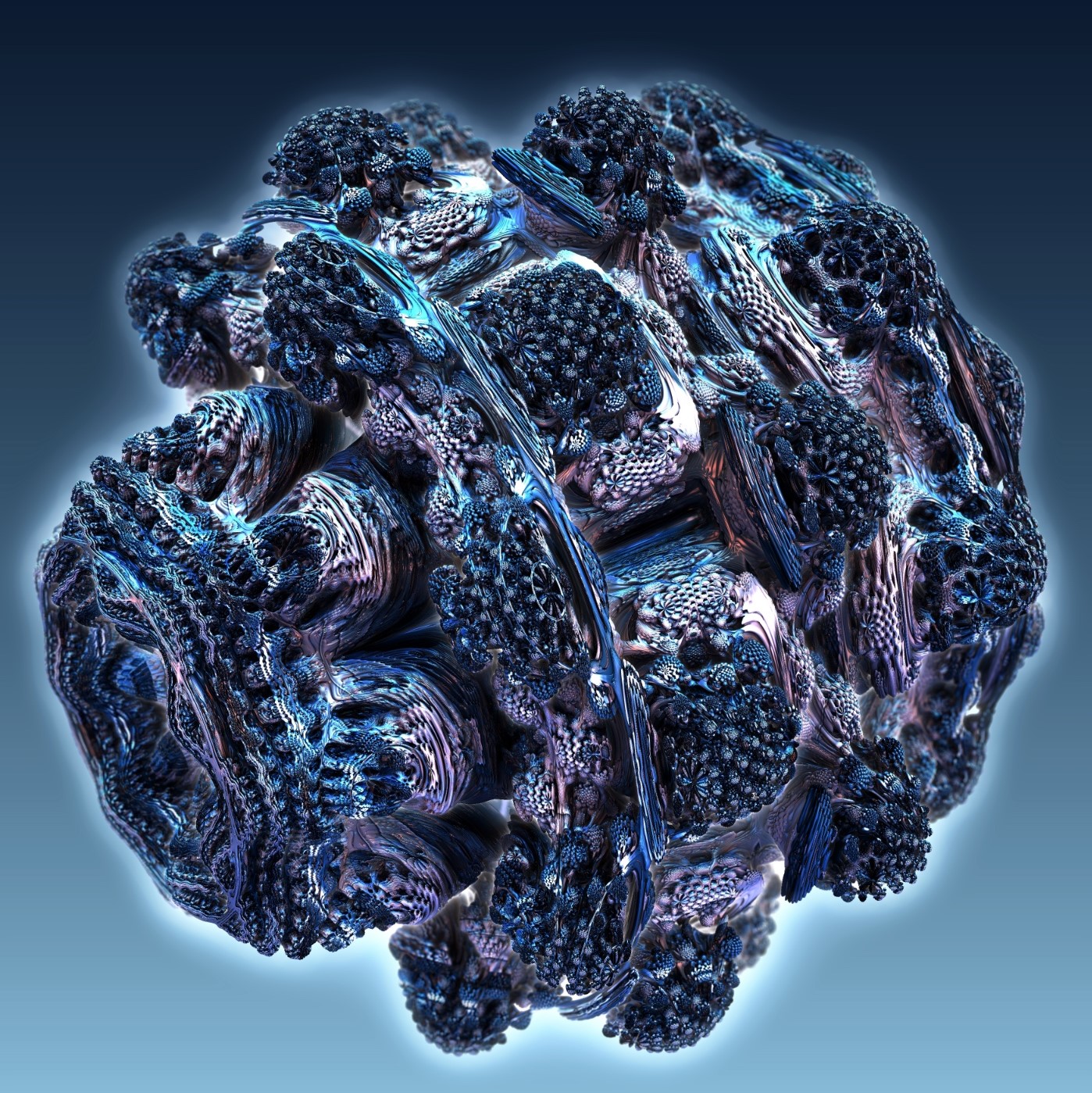}
	\caption{Mandelbulb generated with Mandelbulb 3D.}
	\end{subfigure}
\caption{\label{mandelbulb}Comparison between $\spher^{8}$ and the Mandelbulb.}
\end{figure}

\begin{figure}[h]
\centering
	\begin{subfigure}{0.3\textwidth}
	\centering
	\includegraphics[width=3.5cm]
	{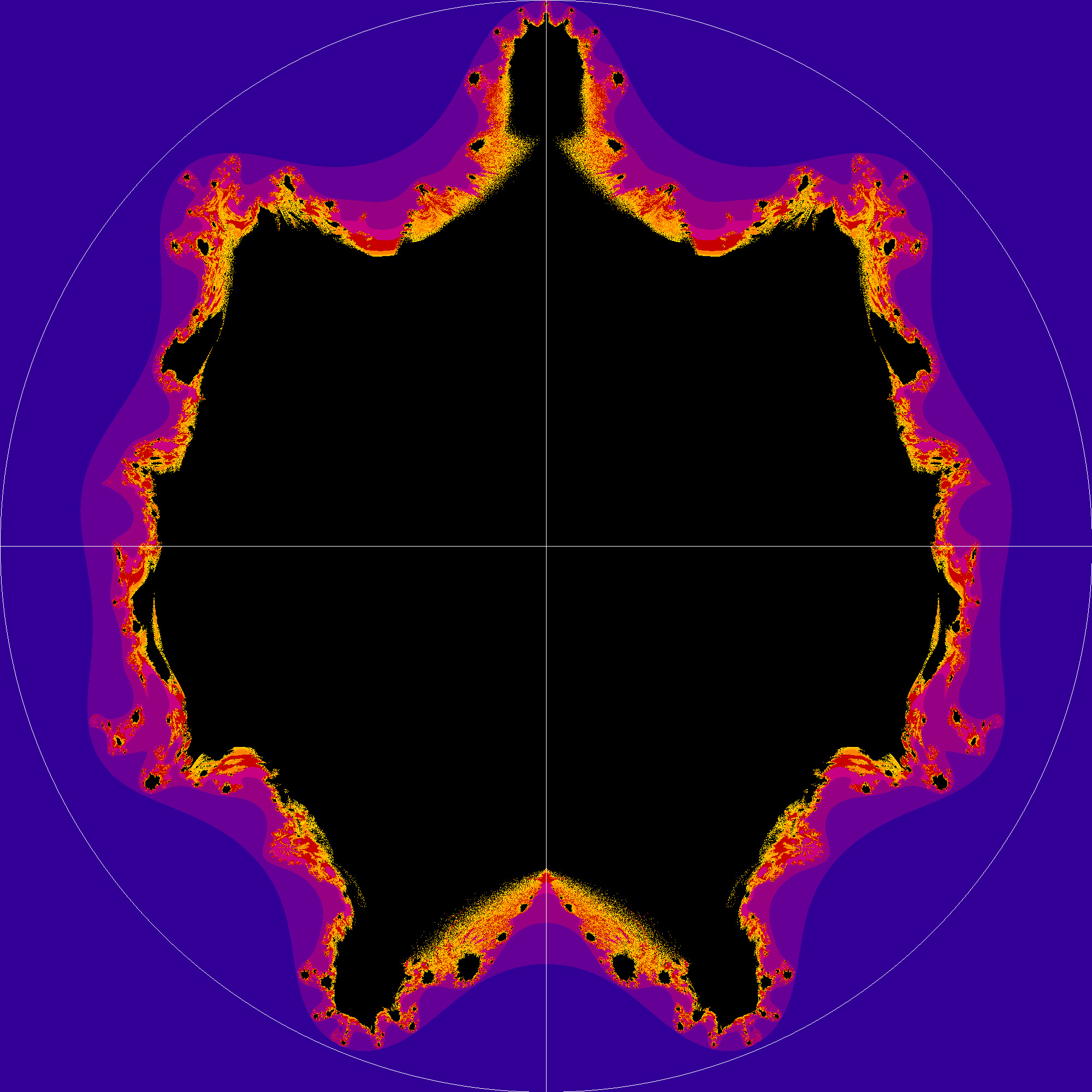}
	\caption{Slice when $x=0$.}
	\end{subfigure}
\hfill
\begin{subfigure}{0.3\textwidth}
	\centering
	\includegraphics[width=3.5cm]
	{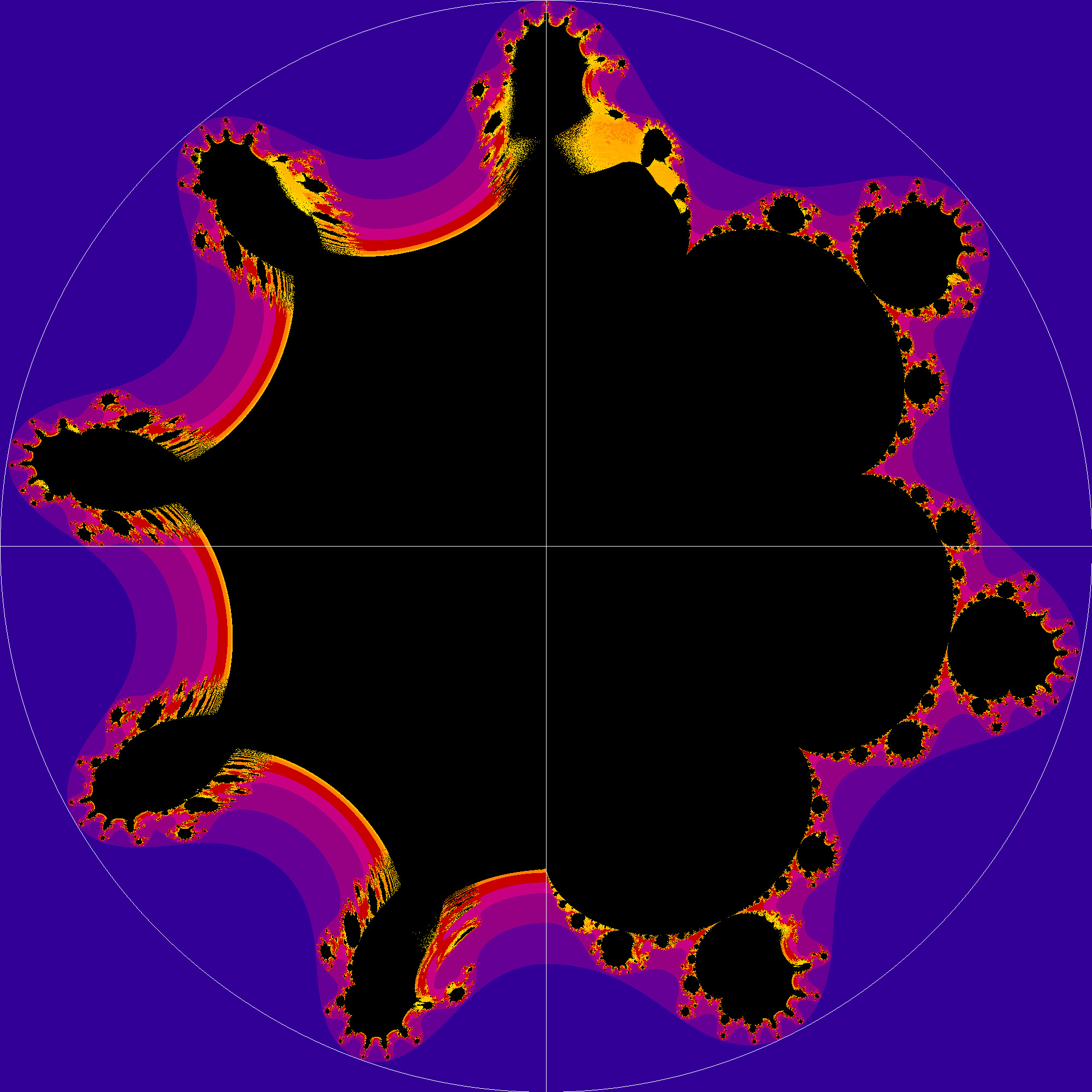}
	\caption{Slice when $y=0$.}
	\end{subfigure}
\hfill
	\begin{subfigure}{0.3\textwidth}
	\centering
	\includegraphics[width=3.5cm] 
	{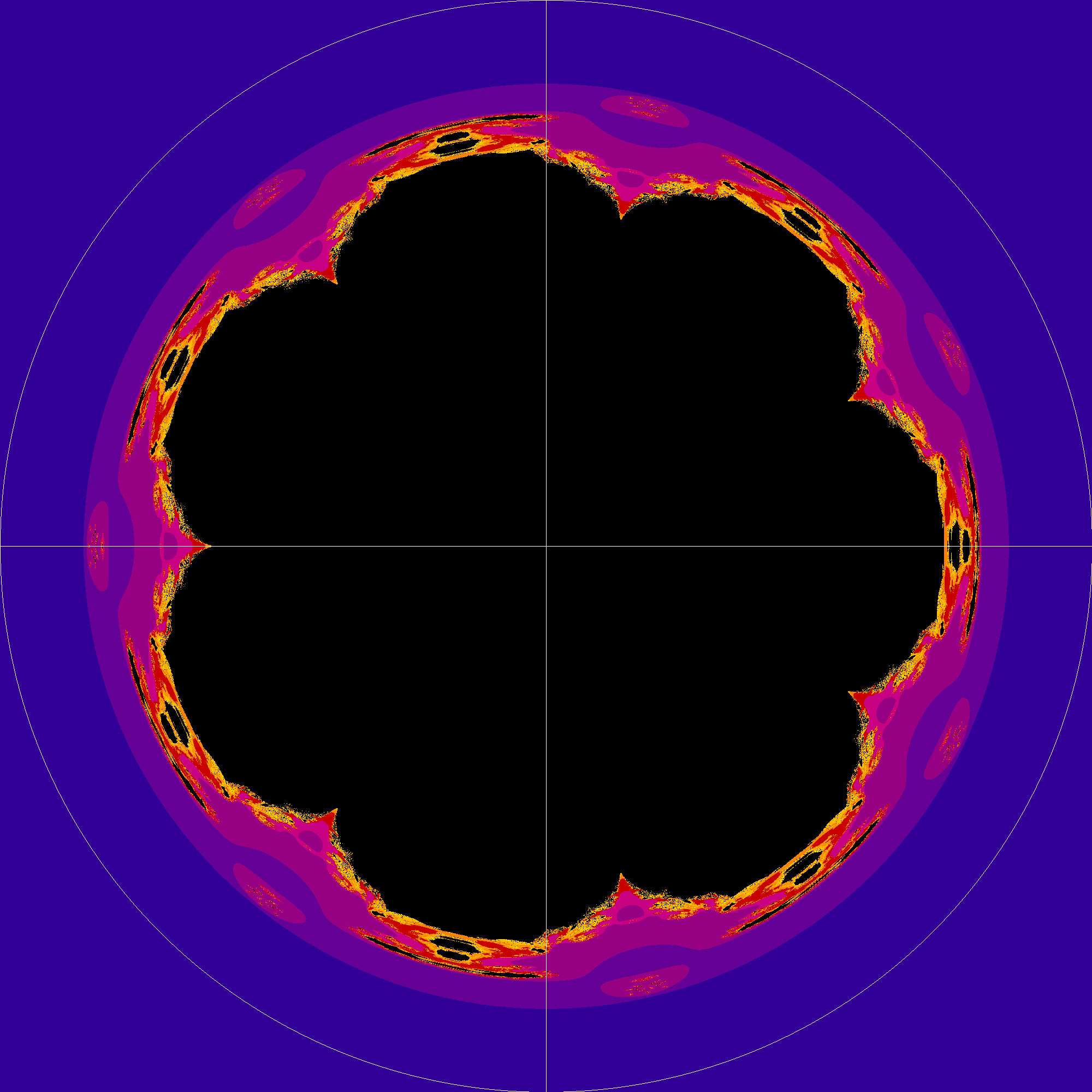}
	\caption{Slice when $z=0$.}
	\end{subfigure}
\caption{\label{mandelbulbcuts}Three slices of $\spher^{8}$ with divergence layers.}
\end{figure}

\begin{theorem}
A pure quaternion $c$ is in the Mandelbrot set $\spher^m$ if and only if $\|Q_{m,c}^{(n)}(0)\|\leq 2^{1/(m-1)}$ for all $n$.
\end{theorem}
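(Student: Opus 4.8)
The plan is to prove the equivalence by exactly the strategy used for Theorem~\ref{theo:borne}: establish the trivial implication directly, and the nontrivial one by contraposition, now invoking the $m$-th power versions Theorem~\ref{mnorm} and Lemma~\ref{mlemma} in place of Theorem~\ref{norm2} and Lemma~\ref{lemma}. Throughout I assume $m\geq 2$, so that $2^{1/(m-1)}$ is well defined.

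First I would dispose of the easy direction. If $\|Q_{m,c}^{(n)}(0)\|\leq 2^{1/(m-1)}$ for every $n$, then the orbit $\{Q_{m,c}^{(n)}(0)\}_{n\in\mathbb{N}}$ is contained in the closed ball of radius $2^{1/(m-1)}$ centered at the origin, hence bounded, so $c\in\spher^m$ directly from the definition of $\spher^m$.

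For the converse I would argue by contrapositive. Suppose $\|Q_{m,c}^{(n_0)}(0)\| > 2^{1/(m-1)}$ for some index $n_0$, and write $\|Q_{m,c}^{(n_0)}(0)\| = 2^{1/(m-1)}+\delta$ with $\delta>0$. To feed this into Lemma~\ref{mlemma} I also need $\|c\|\leq 2^{1/(m-1)}$, so I split into two cases. If $\|c\| > 2^{1/(m-1)}$, then $c\notin\spher^m$ already by Theorem~\ref{mnorm} (equivalently, by Lemma~\ref{mprealableborne} the orbit of $0$ escapes), and we are done. If $\|c\|\leq 2^{1/(m-1)}$, the hypotheses of Lemma~\ref{mlemma} are met with $n=n_0$, giving $\|Q_{m,c}^{(n_0+k)}(0)\| \geq 2^{1/(m-1)} + (2m)^k\delta$ for all $k\geq 1$. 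Since $m\geq 2$ yields $2m\geq 4 > 1$, the right-hand side tends to $\infty$ as $k\to\infty$, so the orbit of $0$ is unbounded and $c\notin\spher^m$. Combining the two cases, $c\notin\spher^m$ whenever some iterate exceeds $2^{1/(m-1)}$, which is the contrapositive of the desired implication.

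I do not expect a genuine obstacle here: all the analytic work has been front-loaded into Theorem~\ref{mnormspher}, Theorem~\ref{mnorm} and Lemma~\ref{mlemma}. The only point deserving care is the bookkeeping around the hypothesis $\|c\|\leq 2^{1/(m-1)}$ of Lemma~\ref{mlemma} — one must first rule out the case $\|c\| > 2^{1/(m-1)}$ via Theorem~\ref{mnorm} before the escape estimate applies — together with the trivial observation that $(2m)^k\to\infty$ because $m\geq 2$, which is what upgrades the lower bound to honest divergence. This mirrors the $m=2$ argument (where the growth factor was $4^n$) line for line, so no new ideas are needed.
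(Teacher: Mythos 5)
Your proposal is correct and follows essentially the same route as the paper: the easy direction from the definition of $\spher^{m}$, and the hard direction via Theorem~\ref{mnorm} plus the escape estimate of Lemma~\ref{mlemma} with $(2m)^k\delta \to \infty$. The only cosmetic difference is that you phrase it as a contrapositive with a case split on $\|c\|$, whereas the paper assumes $c\in\spher^{m}$ (so Theorem~\ref{mnorm} gives $\|c\|\leq 2^{1/(m-1)}$ outright) and derives a contradiction — logically the same argument.
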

\begin{proof}
The fact that if $\|Q_{m,c}^{(n)}(0)\|\leq 2^{1/(m-1)}$ for all $n$, then $c \in \spher^m$ is a direct consequence of the spherical Mandelbrot set $\spher^m$ definition. We now show that if $c \in \spher^m$, then $\|Q_{m,c}^{(n)}(0)\|\leq 2^{1/(m-1)}$ for all $n$. Let $c \in \spher^m$. Then, from Theorem \ref{mnorm}, we have $\|c\| \leq 2^{1/(m-1)}$. Suppose there exists an $n\in\mathbb{N}$ such that $\| Q^{(n)}_{m,c}(0)\| = 2^{1/(m-1)} + \delta$ for some $\delta >0$. Then, from Lemma \ref{mlemma}, $\| Q^{(n+k)}_{m,c}(0)\| \geq 2^{1/(m-1)} + (2m)^k\delta$ for all $k \geq 1$. Thus, $\| Q_{m,c}^{(n+k)}(0) \| \rightarrow \infty$ which means that $\| Q_{m,c}^{(n)}(0) \| \rightarrow \infty$ and that $ c \notin \spher^m$. We have a contradiction. Therefore, we have $\|Q_{m,c}^{(n)}(0)\| \leq 2^{1/(m-1)}$ for all $n$. 
\end{proof}

We found that the boundary is the same as the one for Multibrots. We use this boundary to generate the set using Python as showed in Figure \ref{mandelbulb}. Figure \ref{mandelbulbcuts} shows three slices of $\spher^8$ with divergence layers. It also shows the disc of radius $2^{1/7}$ centered at $0$ in white. We see that the set is contained in the disc as showed in Theorem \ref{mnorm}.

\subsection{Variations of the spherical Mandelbrot set}
The aim of this section is to introduce other generalizations of the Mandelbrot set that are variations of the spherical Mandelbrot set presented in the last section. Specifically, we use a variation of the spherical product when the angles $\phi$ and $\theta$ does not necessarily vary the same way. Let’s start with the following definitions where the minimal value for the multipliers of the angles are not required to be $2$ as for the previous results.
\begin{definition}
Let $a, b\in\mathbb{R}$. Consider a pure quaternion $q$ in its spherical representation. The product $\times_{s}^{(a,b)}$ is denoted: 
$$
q \times_{s}^{(a,b)} q := \rho^2 \big( i\sin(a\phi)\cos(b\theta)+j\sin(a\phi)\sin(b\theta)+k\cos(a\phi)\big).
$$
\end{definition}
We note that the variable $a$ is the multiplier of the angle $\phi$ when $b$ is the multiplier of the angle $\theta$.
\begin{figure}[htp]
\centering
\includegraphics[scale=0.05]{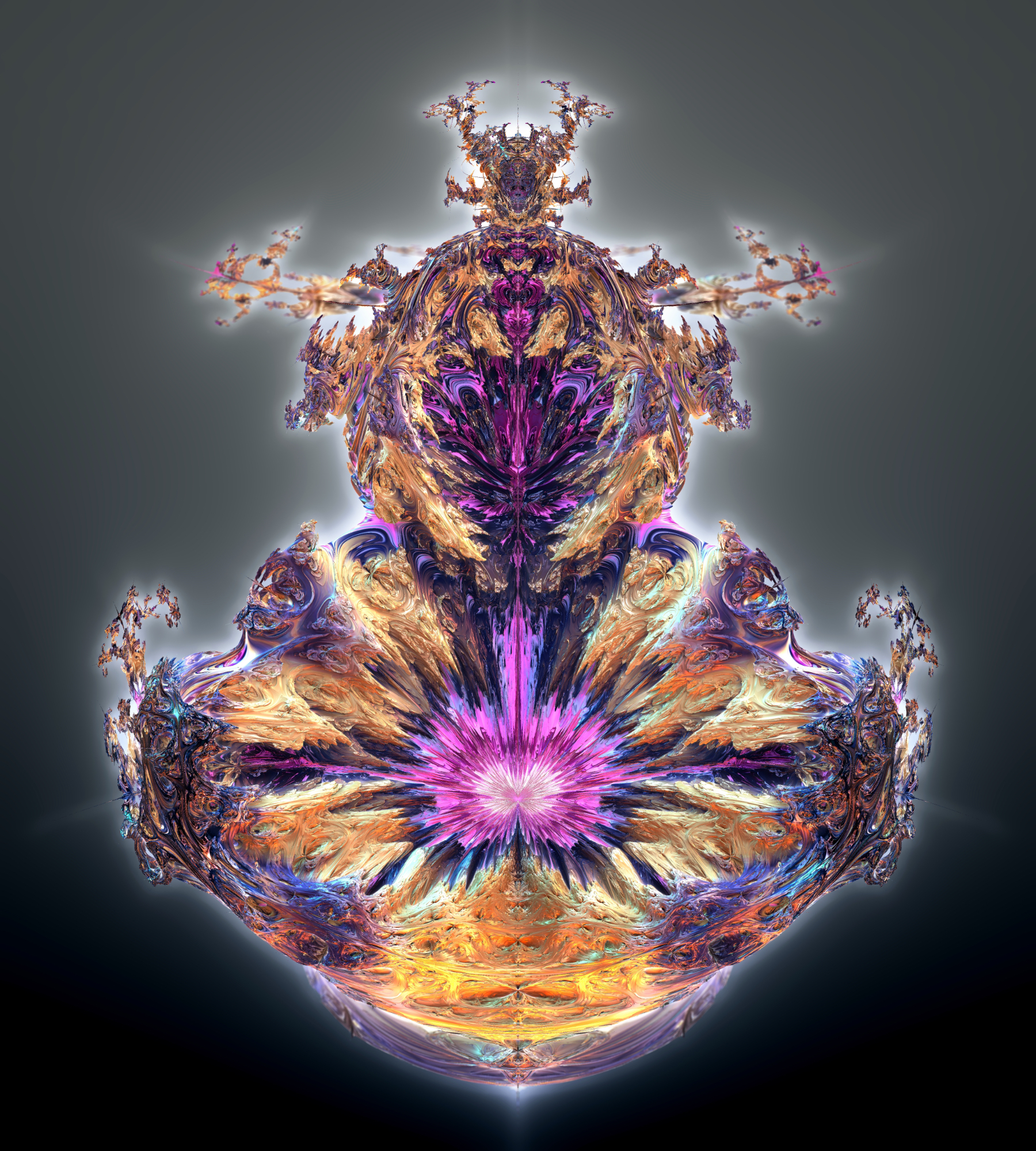}\hspace{10mm}
\includegraphics[scale=0.0665]{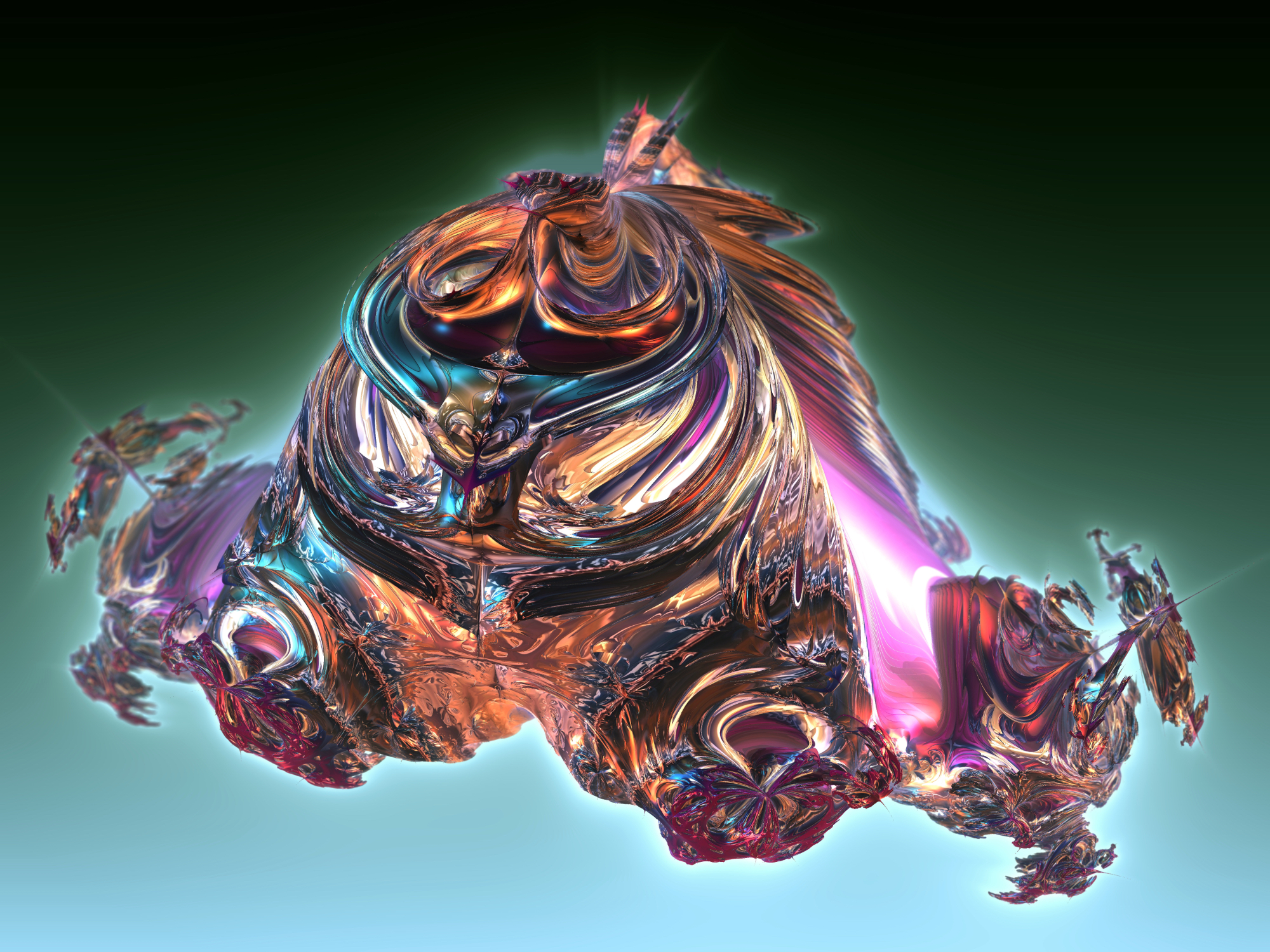}\hfill
\caption{\label{fig:Goldenbulb}The Goldenbulb for the power 2 and 3.}
\end{figure}

\begin{definition}
Let $a, b\in\mathbb{R}$. The Mandelbrot set $\spher^{(a,b)}$ is defined as
$$
\spher^{(a,b)}=\lbrace c\in\ \textnormal{Im}\mathbb{H}\ |\  \lbrace Q^{(n)}_c(0)\rbrace_{n\in \mathbb{N}}\ \text{is bounded}\rbrace,
$$
where $Q^{(n)}_c(q) = q \times_s^{(a,b)} q +c$.
\end{definition}
The last definition is very general. In this section, we will study specifically the case of $\spher^{(1,2)}$ and $\spher^{(2,1)}$. We note that the special case $\spher^{(\varphi,n)}$ where $\varphi$ is the Golden Ratio is called the \textit{Goldenbulb} for the power $n$ (see Figure \ref{fig:Goldenbulb} for $n=2,3$).

Now, we present the set $\spher^{(1,2)}$ that is showed in Figure \ref{fig:bulbic3d}. The following theorem shows that the slice when $z=0$ has the same dynamics as the Mandelbrot set. Figure \ref{coupesbulbiques} shows this slice generated with Python.
\begin{figure}[htp]
\centering
\includegraphics[scale=0.7]{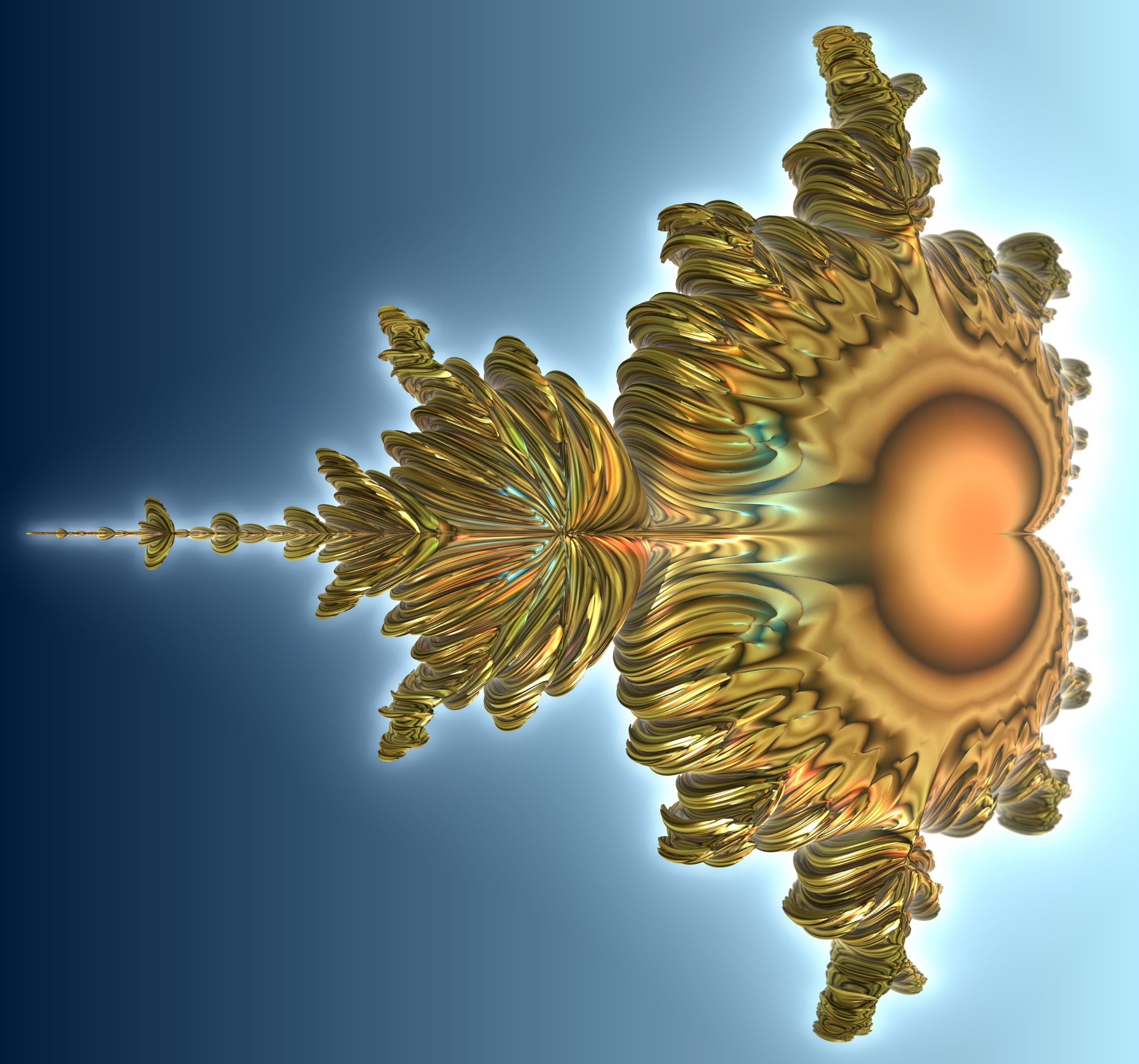}
\caption{\label{fig:bulbic3d}Representation of the Bulbic case, the set $\spher^{(1,2)}$.}
\end{figure}
\begin{theorem}
The slice when $z=0$ of the Mandelbrot set $\spher^{(1,2)}$ has the same dynamics as the Mandelbrot set $\mathcal{M}$ in the complex plane.
\end{theorem}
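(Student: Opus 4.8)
The plan is to track what the iteration $Q^{(n)}_c(q) = q \times_s^{(1,2)} q + c$ does to points lying in the $z=0$ slice, i.e.\ pure quaternions of the form $q = \rho(i\cos\theta + j\sin\theta)$ with $\phi = \pi/2$. For such a point, the spherical coordinate has $\phi_1 = \pi/2$, so $\sin(1\cdot\phi_1) = \sin(\pi/2) = 1$ and $\cos(1\cdot\phi_1)=\cos(\pi/2)=0$; hence $q \times_s^{(1,2)} q = \rho^2(i\cos(2\theta) + j\sin(2\theta) + 0\cdot k)$, which again has vanishing $k$-component. So the first step is to verify that the slice $\{z=0\}$ is invariant under $Q_c$ whenever $c$ itself has zero $k$-component (which is exactly the hypothesis $c$ ranges over the $z=0$ slice): the squaring map keeps $\phi$ fixed at $\pi/2$ and adding $c$ keeps it there, so the whole orbit stays in the plane $\vspan\{i,j\}$.

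Next I would set up the explicit conjugacy with $\mathbb{C}$. Identify the plane $\vspan\{i,j\}$ with $\mathbb{C}$ via $\rho(i\cos\theta + j\sin\theta) \longleftrightarrow \rho\,\me^{\bi\theta}$ (here using an abstract complex unit $\bi$ to avoid clashing with the quaternionic $i$). Under this identification, the computation above shows $q\times_s^{(1,2)}q$ corresponds to $\rho^2\me^{2\bi\theta} = (\rho\me^{\bi\theta})^2$, i.e.\ the spherical square is literally the complex square, and addition of pure quaternions in this plane is complex addition. Therefore the map $Q_c$ on the slice is conjugate to $z \mapsto z^2 + \hat c$ where $\hat c \in \mathbb{C}$ is the image of $c$. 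The orbit $\{Q^{(n)}_c(0)\}$ is bounded in $\mathrm{Im}\,\mathbb{H}$ if and only if the orbit $\{P^{(n)}_{\hat c}(0)\}$ is bounded in $\mathbb{C}$ (the identification is norm-preserving since $\|\rho(i\cos\theta+j\sin\theta)\| = \rho = |\rho\me^{\bi\theta}|$), which says precisely that $c$ lies in the $z=0$ slice of $\spher^{(1,2)}$ if and only if $\hat c \in \mathcal{M}$. Concluding, I would state that the correspondence $c \mapsto \hat c$ is an isometric bijection from the $z=0$ slice onto $\mathbb{C}$ carrying the slice of $\spher^{(1,2)}$ onto $\mathcal{M}$ and intertwining the two dynamics.

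The one subtlety to handle carefully — and the step I expect to be the only real obstacle — is the bookkeeping at the pole/origin exceptions of the spherical representation. The definition of $\times_s^{(1,2)}$ presupposes a spherical representation, and the uniqueness conventions fix $\theta = 0$ when $\phi \in \{0,\pi\}$ and at $0$; one has to check that a point in the $z=0$ plane either has $\phi=\pi/2$ (generic case, handled above) or is the origin $q=0$ (for which $\theta=\phi=0$ and both $Q_c(0) = c$ and the complex formula agree trivially). In particular the doubling $\theta \mapsto 2\theta$ must be read modulo the $2\pi$-periodicity built into the representation, so I would remark that $\sin,\cos$ being $2\pi$-periodic makes $2\theta$ well defined as an angle and the identification with $z^2$ is unaffected; no genuine pole is ever hit because $\phi$ stays locked at $\pi/2$. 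Once this is dispatched the argument is a direct, essentially computational verification, so the proof can be kept short: exhibit the map, compute one spherical square, invoke the periodicity remark, and quote the boundedness equivalence.
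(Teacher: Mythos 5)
Your proof is correct and rests on the same key computation as the paper's: with $\phi$ locked at $\pi/2$, the product $\times_s^{(1,2)}$ squares the radius and doubles $\theta$ while keeping the iterate in the plane $\vspan\{i,j\}$, so under the isometric identification of that plane with $\mathbb{C}$ the iteration becomes exactly $z\mapsto z^2+\hat c$. The paper packages this as an explicit induction matching the components of $Q_q^{(n)}(0)$ and $P_c^{(n)}(0)$ via De Moivre, while you phrase it as a conjugacy intertwining the two dynamics; the mathematical content is the same, and your explicit handling of the origin/pole conventions and of the boundedness equivalence is consistent with (indeed slightly more explicit than) the paper's conclusion.
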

\begin{proof}
Consider $q$ a quaternion in the $xy$ plane and $c$ a complex number which have the same representation of an arbitrary point $(a,b)$ in their respective plane. Thus, we have $q=ai+bj$ and $c=a+bi$. We show that $Q_q^{(n)}(0)$ and $P_c^{(n)}(0)$ have also the same representation which means that
\begin{align*}
Q_q^{(n)}(0) = \rho_n(i\cos\theta_n+j\sin\theta_n) && \text{and} && P_c^{(n)}(0) = \rho_n(\cos\theta_n+i\sin\theta_n).
\end{align*}
We show it using induction on $n$. For $n=1$, we have, from the unique spherical representation of a pure quaternion,
\begin{align*}
Q^{(1)}_q(0) &= q \\
&= ai+bj\\
&= \rho_1(i\sin(\pi/2)\cos\theta_1+j\sin(\pi/2)\sin\theta_1 + k\cos(\pi/2))\\
&= \rho_1(i\cos\theta_1+j\sin\theta_1)
\end{align*}
where $\rho_1=\sqrt{a^2+b^2}$ and $\theta_1 = \arctantwo(b,a)$ with $0\leq\theta_1<2\pi$. Moreover, we have, using the unique polar representation of a complex number,
\begin{align*}
P^{(1)}_c(0) &= c \\
&= a+bi\\
&= \rho_1(\cos\theta_1+i\sin\theta_1)
\end{align*}
where $\rho_1=\sqrt{a^2+b^2}$ and $\theta_1 = \arctantwo(b,a)$ with $0\leq\theta_1<2\pi$. Thus, the statement is true for $n=1$, since we have
\begin{align*}
Q_q^{(1)}(0) = \rho_1(i\cos\theta_1+j\sin\theta_1) && \text{and} && P_c^{(1)}(0) = \rho_1(\cos\theta_1+i\sin\theta_1).
\end{align*}
Now, we suppose that the statement is true for $n=m$ and we show it is true for $n=m+1$. From the induction hypothesis, the unique spherical representation of $Q_q^{(m)}(0)$ is
$$
Q_q^{(m)}(0) = \rho_m(i\sin(\pi/2)\cos\theta_m+j\sin(\pi/2)\sin\theta_m + k\cos(\pi/2)).
$$
So, we have
\begin{align*}
Q^{(m+1)}_q(0) &= Q_q^{(m)}(0) \times_s^{(1,2)} Q_q^{(m)}(0) + q\\
&= \rho_m^2(i\sin(\pi/2)\cos 2\theta_m+j\sin(\pi/2)\sin 2\theta_m + k\cos(\pi/2))\\
& + \rho_1(i\cos\theta_1+j\sin\theta_1)\\
&= (\rho^2_m\cos 2\theta_m + \rho_1 \cos \theta_1)i + (\rho^2_m\sin 2\theta_m + \rho_1\sin\theta_1)j.
\end{align*}
Moreover, we have, using the De Moivre formula and the induction hypothesis,
\begin{align*}
P^{(m+1)}_c(0) &= [P^{(m)}_c(0)]^2 + c \\
&= [\rho_m(\cos\theta_m+i\sin\theta_m)]^2 + \rho_1(\cos\theta_1+i\sin\theta_1)\\
&= \rho_m^2(\cos 2\theta_m+i\sin 2\theta_m)+ \rho_1(\cos\theta_1+i\sin\theta_1)\\
&= (\rho^2_m\cos 2\theta_m + \rho_1 \cos \theta_1) + (\rho^2_m\sin 2\theta_m + \rho_1\sin\theta_1)i.
\end{align*}
We obtain that $Q^{(m+1)}_q(0)$ and $P_c^{(m+1)}(0)$ have the same components and therefore can be rewritten in unique representation in this way:
\begin{align*}
Q_q^{(m+1)}(0) &= \rho_{m+1}(i\cos\theta_{m+1}+j\sin\theta_{m+1})
\shortintertext{and}
P_c^{(m+1)}(0)& = \rho_{m+1}(\cos\theta_{m+1}+i\sin\theta_{m+1}).
\end{align*}
Thus, since the statement is true for $n=m+1$, it is also true for all $n \geq 1$. Therefore, since the iterates have the same representation in their respective plane, we conclude that the $z=0$ slice of $\spher^{(1,2)}$ is visually the same as the complex Mandelbrot set.
\end{proof}
\begin{figure}[ht]
\centering
	\begin{subfigure}{0.45\textwidth}
	\centering
	\includegraphics[width=5cm]
	{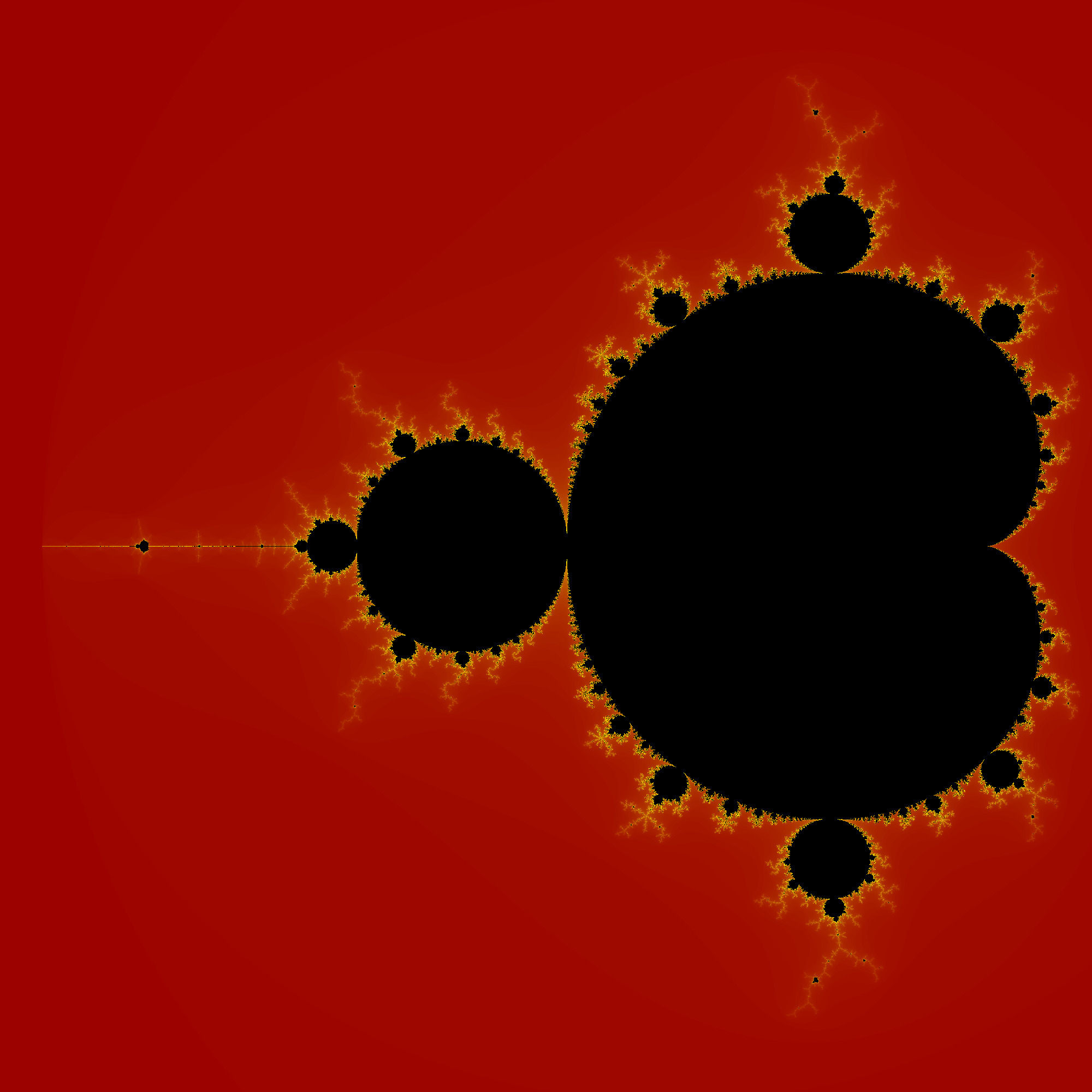}
	\caption{Generated in Python.}
	\end{subfigure}
\hfill
	\begin{subfigure}{0.45\textwidth}
	\centering
	\includegraphics[width=5cm] 
	{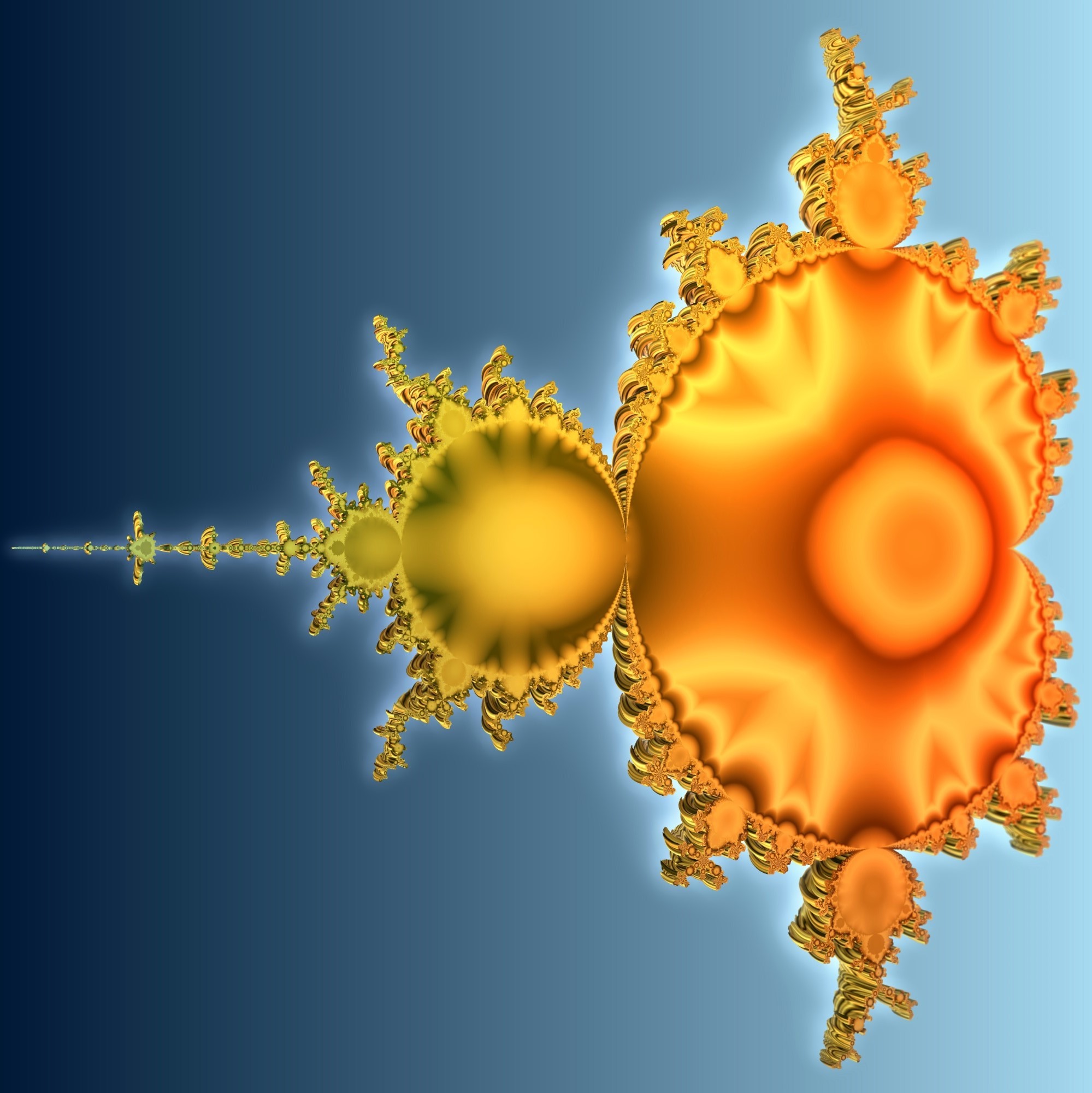}
	\caption{Generated with Mandelbulb 3D.}
	\end{subfigure}
\caption{\label{coupesbulbiques} Comparison of the $z=0$ slice of $\spher^{(1,2)}$ and the Bulbic Mandelbrot set.}
\end{figure}
The following theorem links the quaternionic rotation and the product $\times_s^{(1,2)}$. In fact, this product can be interpreted as a quaternionic product.
\begin{theorem}
\label{theo:link}
Let $q$ be a pure quaternion. Then,
$$
q \times_s^{(1,2)} q = \rho(\cos(\theta/2)+k\sin(\theta/2))q((\cos(\theta/2)-k\sin(\theta/2)),
$$
where $\rho$ and $\theta$ are respectively the radius and the angle of the spherical representation of $q$.
\end{theorem}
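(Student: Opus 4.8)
The plan is to compute both sides of the identity explicitly from the spherical representation $q = \rho(i\sin\phi\cos\theta + j\sin\phi\sin\theta + k\cos\phi)$ and to recognize the right-hand side as a rescaled quaternionic rotation. On the left, the definition of $\times_s^{(1,2)}$ gives at once
\begin{align*}
q \times_s^{(1,2)} q = \rho^2\big(i\sin\phi\cos(2\theta) + j\sin\phi\sin(2\theta) + k\cos\phi\big),
\end{align*}
so the effect of the product is to multiply the radius by $\rho$ and to double the azimuthal angle $\theta$ while leaving the polar angle $\phi$ untouched. On the right, set $Q := \cos(\theta/2) + k\sin(\theta/2)$. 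Then $\|Q\| = 1$, hence $Q^{-1} = \overline{Q} = \cos(\theta/2) - k\sin(\theta/2)$, and the right-hand side of the statement is exactly $\rho\,QqQ^{-1}$.

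Next I would invoke Theorem \ref{theo:rotation} with $P = q$, $u = k$ (so that $u/\|u\| = k$) and half-angle $\theta/2$: the product $QqQ^{-1}$ is the image of $q$ under the rotation about the $k$-axis through the angle $2\cdot(\theta/2) = \theta$. A rotation about the $k$-axis preserves the norm and the polar angle measured from $k$, and advances the azimuthal angle by $\theta$ (by the right-hand rule, carrying the $i$-direction toward the $j$-direction), so the point with spherical data $(\rho,\theta,\phi)$ is sent to the point with data $(\rho,2\theta,\phi)$; that is, $QqQ^{-1} = \rho(i\sin\phi\cos(2\theta) + j\sin\phi\sin(2\theta) + k\cos\phi)$. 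Multiplying by $\rho$ reproduces the expression for $q\times_s^{(1,2)} q$ obtained above, which would complete the proof.

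As a self-contained alternative that avoids any appeal to the geometric content of the rotation theorem, I would instead expand $QqQ^{-1}$ algebraically: writing $c = \cos(\theta/2)$, $s = \sin(\theta/2)$ and using $ki = j$, $kj = -i$, $k^2 = -1$, a short computation gives $QqQ^{-1} = (q_1\cos\theta - q_2\sin\theta)i + (q_1\sin\theta + q_2\cos\theta)j + q_3 k$ for $q = q_1 i + q_2 j + q_3 k$; substituting $q_1 = \rho\sin\phi\cos\theta$, $q_2 = \rho\sin\phi\sin\theta$, $q_3 = \rho\cos\phi$ and applying the double-angle identities yields the same formula. The extreme cases are immediate: at the poles $\phi \in \{0,\pi\}$ the convention $\theta = 0$ forces $Q = 1$ and both sides reduce to $\pm\rho^2 k$, while for $q = 0$ both sides vanish.

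The only delicate point is the orientation bookkeeping: one must verify that the rotation supplied by Theorem \ref{theo:rotation} about $+k$ through $+\theta$ increases rather than decreases the azimuthal angle, so that $\theta$ genuinely becomes $2\theta$. The direct algebraic expansion of $QqQ^{-1}$ settles this unambiguously and is the safer route should the sign convention be in doubt; the slightly unusual scalar factor $\rho$ in front of $QqQ^{-1}$ is merely the leftover radius, since conjugation is norm-preserving whereas the spherical product multiplies the two radii.
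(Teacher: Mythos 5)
Your proposal is correct, and both of its routes lead to the right identity; the verification of the expansion $QqQ^{-1}=(q_1\cos\theta-q_2\sin\theta)i+(q_1\sin\theta+q_2\cos\theta)j+q_3k$ and the double-angle substitution check out, as do the pole and zero cases. The paper's own proof is exactly your ``self-contained alternative'': it substitutes the spherical representation of $q$ into $\rho(\cos(\theta/2)+k\sin(\theta/2))\,q\,(\cos(\theta/2)-k\sin(\theta/2))$, multiplies out the quaternion product, and applies trigonometric identities to land on $\rho^2(i\sin\phi\cos(2\theta)+j\sin\phi\sin(2\theta)+k\cos\phi)$, which is $q\times_s^{(1,2)}q$ by definition. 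Your primary route, which reads the right-hand side as $\rho\,QqQ^{-1}$ and invokes Theorem \ref{theo:rotation} to identify it with a rotation about $k$ through $\theta$, is genuinely different in spirit: it explains \emph{why} the identity holds (the $(1,2)$-product only shifts the azimuthal angle, i.e.\ performs a $k$-axis rotation, up to the extra radius factor), and it is this geometric content that the paper only extracts \emph{after} the theorem, when it combines Theorems \ref{theo:rotation} and \ref{theo:link}. The caveat you raise yourself is the real one: the paper's statement of Theorem \ref{theo:rotation} does not fix the sense of the rotation, so the claim that the azimuth increases (rather than decreases) by $\theta$ is not available from that theorem alone; your explicit expansion of $QqQ^{-1}$ is what closes that gap, and with it included the argument is complete and matches the paper's computation.
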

\begin{proof}
Consider $q$ a pure quaternion and its spherical representation
$$
q=\rho(i\sin\phi\cos\theta + j\sin\phi\sin\theta+k\cos\phi).
$$
So, replacing $q$ in the following equation
$$
\rho(\cos(\theta/2)+k\sin(\theta/2))q((\cos(\theta/2)-k\sin(\theta/2))
$$
we have
\noindent $$\rho^2[\cos(\theta/2)+k\sin(\theta/2)][i\sin\phi\cos\theta + 
j\sin\phi\sin\theta+k\cos\phi][(\cos(\theta/2)-k\sin(\theta/2)].$$

Now, by multiplying the terms and by using trigonometric identities we obtain 
$$\rho^2(i\sin\phi\cos(2\theta)+j\sin\phi\sin(2\theta) + k\cos\phi).$$ Thus, 
$$
q \times_s^{(1,2)} q = \rho(\cos(\theta/2)+k\sin(\theta/2))q(\cos(\theta/2)-k\sin(\theta/2)).
$$
\end{proof}
Theorem \ref{theo:rotation} and Theorem \ref{theo:link} allow us to conclude that the product $\times_s^{(1,2)}$ is equivalent to a rotation of $\theta$ around the vector $k$ when $q$ is unitary. 

The other variation case of the spherical Mandelbrot set that we present is the set $\spher^{(2,1)}$. The following theorem shows a link between this set and the quaternionic Mandelbrot set.
\begin{theorem}
\label{theo:spher21}
The Mandelbrot set $\spher^{(2,1)}$ has the same dynamics as the 3D slice $\mathcal{H}(1,i,j)$ of the quaternionic Mandelbrot.
\end{theorem}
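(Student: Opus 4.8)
The plan is to reduce both sets to the same characterization in terms of the classical Mandelbrot set $\mathcal{M}$. By Theorem~\ref{theo:lienmandelbrot}, applied with $q_3=0$, a quaternion $q_0+q_1 i+q_2 j$ lies in $\mathcal{H}(1,i,j)$ if and only if $q_0+i\sqrt{q_1^2+q_2^2}\in\mathcal{M}$; so it suffices to prove the parallel statement that a pure quaternion $c=c_1 i+c_2 j+c_3 k$ belongs to $\spher^{(2,1)}$ if and only if $c_3+i\sqrt{c_1^2+c_2^2}\in\mathcal{M}$. The bijection $c_1 i+c_2 j+c_3 k\mapsto c_3+c_1 i+c_2 j$ (that is, $k\leftrightarrow 1$ with $i,j$ fixed) then carries $\spher^{(2,1)}$ exactly onto $\mathcal{H}(1,i,j)$, giving the asserted identity of dynamics.

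To prove the characterization, fix $c$ and work inside the plane it spans with $k$. If $(c_1,c_2)\neq(0,0)$ set $u:=(c_1 i+c_2 j)/\sqrt{c_1^2+c_2^2}$, and otherwise set $u:=i$; then $u$ is a unit vector orthogonal to $k$, the quaternion $c=\sqrt{c_1^2+c_2^2}\,u+c_3 k$ lies in $\Pi:=\vspan\{u,k\}$, and I would identify $\Pi$ with $\mathbb{C}$ through the $\mathbb{R}$-linear, modulus-preserving isomorphism $au+bk\mapsto b+ai$, under which $c$ corresponds to $\gamma:=c_3+i\sqrt{c_1^2+c_2^2}$. The key point is that $\Pi$ is stable under $v\mapsto v\times_s^{(2,1)}v$ and that, under this identification, the map becomes $z\mapsto z^2$. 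Every $v\in\Pi$ can be written $v=\rho(u\sin\psi+k\cos\psi)$ with $\rho\geq 0$ and $\psi\in[0,2\pi)$; writing $u=i\cos\theta+j\sin\theta$, its standard spherical representation is $(\rho,\theta,\psi)$ when $\psi\in[0,\pi]$ and $(\rho,\theta+\pi,2\pi-\psi)$ (with azimuth reduced modulo $2\pi$) when $\psi\in(\pi,2\pi)$. In both cases a short trigonometric computation gives $v\times_s^{(2,1)}v=\rho^2\bigl(u\sin(2\psi)+k\cos(2\psi)\bigr)\in\Pi$, which corresponds to $\rho^2\bigl(\cos(2\psi)+i\sin(2\psi)\bigr)$, i.e.\ (by the complex De Moivre formula) to the square of the complex number $\rho(\cos\psi+i\sin\psi)$ representing $v$. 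Since adding $c$ corresponds to adding $\gamma$ and $0\in\Pi$ corresponds to $0\in\mathbb{C}$, an immediate induction on $n$ shows that $Q^{(n)}_c(0)\in\Pi$ and that $Q^{(n)}_c(0)$ corresponds to $P^{(n)}_\gamma(0)$ for all $n$. As the identification preserves the modulus, $\{Q^{(n)}_c(0)\}_n$ is bounded if and only if $\{P^{(n)}_\gamma(0)\}_n$ is, i.e.\ if and only if $\gamma\in\mathcal{M}$, which is the desired characterization.

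The one genuinely delicate step is the normalization of the spherical representation in the computation above: the doubled angle $2\psi$ may leave $[0,\pi]$, which forces the representation to replace $u$ by $-u$ and $\psi$ by $2\pi-\psi$, and one must check that this bookkeeping leaves $v\times_s^{(2,1)}v$ (hence the correspondence with complex squaring) unchanged — it does, because $\cos(2\psi)$ and $\sin(2\psi)$ are unaffected by it. The degenerate cases $v=0$ (the product vanishes) and $(c_1,c_2)=(0,0)$ (the orbit stays on the $k$-axis and $\gamma$ is real) are immediate and only need to be recorded.
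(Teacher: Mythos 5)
Your argument is correct, but it follows a genuinely different route from the paper's. The paper proves the theorem by one direct induction that matches the two orbits themselves: for $q=ak+bi+cj$ and $p=a+bi+cj$ it shows, using the quaternionic De Moivre formula (Theorem \ref{theo:demoivre}), that $Q_q^{(n)}(0)$ and $P_p^{(n)}(0)$ have the same components with respect to $\{k,u\}$ and $\{1,u\}$, where $u=(bi+cj)/\sqrt{b^2+c^2}$, so the spherical iteration is conjugated directly to the quaternionic one and the classical set $\mathcal{M}$ never enters. You instead reduce both sides to $\mathcal{M}$: the slice $\mathcal{H}(1,i,j)$ via Theorem \ref{theo:lienmandelbrot} with $q_3=0$, and $\spher^{(2,1)}$ via an explicit conjugacy, on the invariant plane $\Pi=\vspan\{u,k\}$, between $v\mapsto v\times_s^{(2,1)}v+c$ and $z\mapsto z^2+\gamma$ with $\gamma=c_3+i\sqrt{c_1^2+c_2^2}$. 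Your route yields a sharper intermediate statement ($c\in\spher^{(2,1)}$ if and only if $c_3+i\sqrt{c_1^2+c_2^2}\in\mathcal{M}$, exhibiting $\spher^{(2,1)}$ as the rotation of $\mathcal{M}$ about the $k$-axis), and your two-case bookkeeping for the spherical normalization (replacing $u$ by $-u$ and $\psi$ by $2\pi-\psi$ when $\sin\psi<0$, and checking that $v\times_s^{(2,1)}v=\rho^2\bigl(u\sin(2\psi)+k\cos(2\psi)\bigr)$ in all cases, poles and $v=0$ included) makes explicit a point the paper's induction passes over: the coefficient $\rho_m^2\sin(2\phi_m)+\rho_1\sin\phi_1$ of $u$ in the next iterate can be negative, so the ``unique representation'' invoked there in fact requires exactly this azimuth flip. (Your closing justification is better phrased as ``the sign changes of $\sin$ and of $u$ cancel'' rather than ``$\sin(2\psi)$ is unaffected,'' but the computation you state is the right one.) What the paper's direct matching buys is a pointwise correspondence between the spherical and quaternionic orbits, which is closer to the literal phrase ``same dynamics''; in your version that dynamical link is inherited only through the common conjugacy to the complex quadratic map packaged in Theorem \ref{theo:lienmandelbrot}, which is acceptable here since that theorem is available and the paper itself interprets the conclusion as the two sets being visually identical.
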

\begin{proof}
Consider a pure quaternion $q=ak+bi+cj$ in the 3D space and a quaternion $p=a+bi+cj$. These two quaternions represent the same point $(a,b,c)$ in their respective space. We show that $Q_q^{(n)}(0)$ and $P_p^{(n)}(0)$ have also the same representation which means that
\begin{align*}
Q_q^{(n)}(0) & = \rho_n\left[k\cos\phi_n + \sin\phi_n\left( \frac{bi+cj}{\sqrt{b^2+c^2}} \right)\right]
\shortintertext{and}
 P_p^{(n)}(0) & =  \rho_n\left[\cos\phi_n + \sin\phi_n\left( \frac{bi+cj}{\sqrt{b^2+c^2}} \right)\right].
\end{align*}
We show it by mathematical induction on $n$. For $n=1$, we have, from the unique spherical representation of a pure quaternion,
\begin{align*}
Q^{(1)}_q(0) &= q \\
&= ak + bi+cj\\
&= \rho_1(i\sin\phi_1\cos\theta+j\sin\phi_1\sin\theta + k\cos\phi_1)
\end{align*}
where $\rho_1=\sqrt{a^2+b^2+c^2}$, $\phi_1=\arccos(a/\rho_1) $, $\displaystyle \cos\theta = \frac{b}{\sqrt{b^2+c^2}}$ and $\displaystyle \sin\theta = \frac{c}{\sqrt{b^2+c^2}}$. So, we have
$$
Q^{(1)}_q(0) = \rho_1\left[k\cos\phi_1 + \sin\phi_1\left( \frac{bi+cj}{\sqrt{b^2+c^2}} \right)\right].
$$
Moreover, we have, from the unique polar representation of a quaternion,
\begin{align*}
P^{(1)}_p(0) &= p \\
&= a+bi+cj\\
&= \rho_1\left[\cos\phi_1 + \sin\phi_1\left( \frac{bi+cj}{\sqrt{b^2+c^2}} \right)\right]
\end{align*}
where $\rho_1=\sqrt{a^2+b^2+c^2}$ and $\phi_1=\arccos(a/\rho_1) $. So, the statement is true for $n=1$ since we have
\begin{align*}
Q_q^{(1)}(0) & = \rho_1\left[k\cos\phi_1 + \sin\phi_1\left( \frac{bi+cj}{\sqrt{b^2+c^2}} \right)\right]
\shortintertext{and}
 P_p^{(1)}(0) & =  \rho_1\left[\cos\phi_1 + \sin\phi_1\left( \frac{bi+cj}{\sqrt{b^2+c^2}} \right)\right].
\end{align*}
Now, we suppose that the statement is true for $n=m$ and we show that it is true for $n=m+1$. From the induction hypothesis, we have
$$
Q_q^{(m)}(0)  = \rho_m\left[k\cos\phi_m + \sin\phi_m\left( \frac{bi+cj}{\sqrt{b^2+c^2}} \right)\right].
$$

By rewriting this quaternion in unique spherical representation, we obtain
$$
Q_q^{(m)}(0) = \rho_m\left( k\cos\phi_m + i\cos\theta\sin\phi_m + j\sin\theta\sin\phi_m \right)
$$
since $\displaystyle \cos\theta = \frac{b}{\sqrt{b^2+c^2}}$ and $\displaystyle \sin\theta = \frac{c}{\sqrt{b^2+c^2}}$.
Thus, we have
\begin{small}
\begin{align*}
Q^{(m+1)}_q(0) &= Q_q^{(m)}(0) \times_s^{(2,1)} Q_q^{(m)}(0) + q\\
& = \rho_m^2\left( k\cos(2\phi_m) + i\cos\theta\sin(2\phi_m) + j\sin\theta\sin(2\phi_m)\right)\\
& +   \rho_1 \left( k\cos\phi_1 + i\cos\theta\sin\phi_1 + j\sin\theta\sin\phi_1 \right)\\
& =  i \cos\theta (\rho_m^2\sin(2\phi_m)+\rho_1 \sin\phi_1) + j\sin\theta(\rho_m^2\sin(2\phi_m)+\rho_1 \sin\phi_1)\\
& + k (\rho_m^2 \cos(2\phi_m)+\rho_1\cos\phi_1)\\
& = (\rho^2_m\sin(2\phi_m)+\rho_1\sin\phi_1)(i\cos\theta+j\sin\theta) 
+ k (\rho^2_m\cos(2\phi_m)+\rho_1\cos\phi_1)\\
&= (\rho^2_m\sin(2\phi_m)+\rho_1\sin\phi_1)\left( \frac{bi+cj}{\sqrt{b^2+c^2}} \right)
+ k (\rho^2_m\cos(2\phi_m)+\rho_1\cos\phi_1).
\end{align*}
\end{small}%
Moreover, we have, using the De Moivre quaternionic formula (Theorem \ref{theo:demoivre}) and the induction hypothesis, 
\begin{small}
\begin{align*}
P^{(m+1)}_p(0) &= [P^{(m)}_p(0)]^2 + c \\
&= \rho_m^2\left[ \cos(2\phi_m) + \sin(2\phi_m)\left( \frac{bi+cj}{\sqrt{b^2+c^2}} \right) \right] \\ 
& + \rho_1\left[ \cos\phi_1 + \sin\phi_1\left( \frac{bi+cj}{\sqrt{b^2+c^2}} \right) \right]\\
& = (\rho^2_m\cos(2\phi_m)+\rho_1\cos\phi_1) + \left( \frac{bi+cj}{\sqrt{b^2+c^2}} \right)(\rho^2_m\sin(2\phi_m)+\rho_1\sin\phi_1).
\end{align*}
\end{small}%
We obtain that $Q^{(m+1)}_q(0)$ and $P_c^{(m+1)}(0)$ have the same components and therefore can be rewritten in unique representation in thy way:
\begin{align*}
Q_q^{(m+1)}(0) &= \rho_{m+1}\left[k\cos\phi_{m+1} + \sin\phi_{m+1}\left( \frac{bi+cj}{\sqrt{b^2+c^2}} \right)\right]
\shortintertext{and}
P_c^{(m+1)}(0) & = \rho_{m+1}\left[\cos\phi_{m+1} + \sin\phi_{m+1}\left( \frac{bi+cj}{\sqrt{b^2+c^2}} \right)\right].
\end{align*}
Thus, since the statement is true for $n=m+1$, it is also true for all $n \geq 1$. Therefore, since the iterates have the same representation in their respective space, we conclude that the slice $\mathcal{H}(1,i,j)$ and the set $\spher^{(2,1)}$ are visually the same.
\end{proof}
Since, we showed in Theorem \ref{theo:quatslice} that the slice $\mathcal{H}(1,i,j)$ visually represents a rotation of the Mandelbrot set around the real axis, we can conclude using Theorem \ref{theo:spher21} that the set $\spher^{(2,1)}$ also visually represents a rotation of the Mandelbrot set around an axis.
\section{Conclusion}
In this paper, we used spherical coordinates to generalize the Mandelbrot set in 3D. We presented several generalizations. One of them was visually the same as the so-called Mandelbulb. We bounded this set, so we could generate 2D cuts with divergence layers. We also established a link between a variation of the spherical Mandelbrot set and a 3D slice of the quaternionic Mandelbrot set.

In subsequent works, it could be interesting to generalize the Mandelbrot set in the same way we did in this paper, but using the geographic coordinates system (GCS). These coordinates are used like spherical coordinates, but the angles are restricted in this way: $-\pi/2 \leq \phi \leq \pi/2$ and $-\pi < \theta \leq \pi$. As we can see in \cite{Barrallo}, fractals generated with these coordinates are not the same as with the standard spherical coordinates even if the images for the power $8$ look identical. In fact, the morphological difference appears clearly for the power 2, where the GCS case seems to contain the Mandelbrot set. Another interesting avenue of research would be to generalize Julia sets with the spherical product.
\section{Funding} V. Boily would like to thank the ISM for the awards of graduate research grants.

\section{Aknowledgments} The authors are grateful to the Fractal Community, especially for all the programmers of the Mandelbulb 3D software.

\section{Conflicts of interest} The authors declare no conflict of interest.

\bibliographystyle{abbrv}
\bibliography{Article_Final_05}

\end{document}